\newcommand{\miniplus }{\!+\!}
\newtheorem{thm}{Theorem}[section]
\newtheorem{lemma}[thm]{Lemma}
\newtheorem{prop}[thm]{Proposition}
\newtheorem{corollary}[thm]{Corollary}
\definecolor{ultragreen}{RGB}{24,120,50} \theoremstyle{remark}
\newtheorem{example}[thm]{\color{ultragreen}Example}
\theoremstyle{plain}
\newtheorem{conj}[thm]{Conjecture}
\newtheorem{defn}[thm]{Definition}
\newtheorem{rmk}[thm]{Remark}
\newtheorem{soru}[thm]{Question}
\newcommand{\diplus}{\searrow}
\newcommand{\revdiplus}{\nearrow}
\newcommand{\rank}{\mathfrak{R}}
\numberwithin{thm}{section}
\newtheoremstyle{TheoremNum}
{\topsep}{\topsep}              %%% space between body and thm
{\itshape}                      %%% Thm body font
{}                              %%% Indent amount (empty = no indent)
{\bfseries}                     %%% Thm head font
{.}                             %%% Punctuation after thm head
{ }                             %%% Space after thm head
{\thmname{#1}\thmnote{ \bfseries #3}}%%% Thm head spec
\theoremstyle{TheoremNum}
\newtheorem{thmn}{Theorem}
\newcommand{\ZZ}{\mathbb{Z}}      % for Integers
\newcommand{\close}{\circlearrowright\! }
\newcommand{\revclose}{\circlearrowleft\! }
\newcommand{\clmat}{\overline{\mathfrak{M}}}
\newcommand{\rmm}{{\triangledown} \rank}
\newcommand{\drm}{{\rotatebox[origin=c]{180}{$\triangledown$}} \rank}
\newcommand{\dualfix}[1]{{#1}^*}
\DeclareTextFontCommand{\definition}{\color{blue}\em}
\begin{document}
%\begin{frontmatter}

%% Title, authors and addresses

%% use the tnoteref command within \title for footnotes;
%% use the tnotetext command for theassociated footnote;
%% use the fnref command within \author or \address for footnotes;
%% use the fntext command for theassociated footnote;
%% use the corref command within \author for corresponding author footnotes;
%% use the cortext command for theassociated footnote;
%% use the ead command for the email address,
%% and the form \ead[url] for the home page:
%% \title{Title\tnoteref{label1}}
%% \tnotetext[label1]{}
%% \author{Name\corref{cor1}\fnref{label2}}
%% \ead{email address}
%% \ead[url]{home page}
%% \fntext[label2]{}
%% \cortext[cor1]{}
%% \affiliation{organization={},
%%             addressline={},
%%             city={},
%%             postcode={},
%%             state={},
%%             country={}}
%% \fntext[label3]{}

\title{Oriented posets, Rank Matrices and q-deformed Markov Numbers}
\author{EZGİ KANTARCI OĞUZ}
\address{Dept. of Mathematics, 
Galatasaray  University, 
İstanbul} 
\email{ezgikantarcioguz@gmail.com}
\thanks{The author was supported by Tübitak BİDEB grant 121C285.}

\begin{abstract}
		We define \emph{oriented posets} with corresponding \emph{rank matrices}, where linking two posets by an edge corresponds to matrix multiplication. In particular, linking chains via this method gives us fence posets, and taking traces gives us circular fence posets. As an application, we give a combinatorial model for $q$-deformed Markov numbers.  We also resolve a conjecture of Leclere and Morier-Genoud and give several identities between circular rank polynomials.
\end{abstract}

\maketitle
%\end{frontmatter}

	%For a given rational number $r$, we follow the notation of \cite{originalconj} and use $r=[a_1,a_2,\ldots,a_{2m}]$ and $r=\ldb c_1,c_2,\ldots,c_k\rdb$ for the following continued fraction expressions:
	%\[\displaystyle
	%r=a_1+\cfrac{1}{a_2+\cfrac{1}{a_3+\cfrac{1}{\ddots+\cfrac{1}{a_{2m}}}}}\qquad\qquad r=c_1-\cfrac{1}{c_2-\cfrac{1}{c_3-\cfrac{1}{\ddots-\cfrac{1}{c_k}}}}
	%\]
	%Here $a_1, c_1 \in \ZZ$. For $i>1$, $a_i$ are positive integers and $c_i$ are integers larger than or equal to $2$. 
	
	%The $q$-deformation of $r$ can be calculated using either expansion as follows:
	%\[\displaystyle
	%[r]_q:=[a_1]_q+\cfrac{q^{a_1}}{[a_2]_q+\cfrac{q^{a_2}}{[a_3]_q+\cfrac{q^{a_3}}{\ddots+\cfrac{q^{a_{2m-1}}}{[a_{2m}]_q}}}}=[c_1]_q-\cfrac{q^{c_1-1}}{[c_2]_q-\cfrac{q^{c_2-1}}{[c_3]_q-\cfrac{q^{c_3-1}}{\ddots-\cfrac{q^{c_{k-1}-1}}{[c_k]_q}}}}
	%\]
	
\section{Introduction}

 In this paper, we introduce a new tool motivated by the recent work on $q$-deformations of rationals (\cite{originalconj}, \cite{leclere}), fence posets and their rank polynomials (\cite{Saganpaper}, \cite{ourpaper}). \definition{Oriented posets} are posets with specialized end points that can be connected together to build larger posets. We can keep track of this building operation by matrix multiplication, which allows for fast and easy calculations. In this paper, we develop their combinatorics and consider applications on rank polynomial identities and Markov numbers. Oriented posets so far seem to be versatile objects that are applicable to other branches of combinatorics. See \cite{cluster} and \cite{chainlink} for applications in calculating cluster algebra coefficients and studying polytopes respectively.

Let us start with our motivating example of fence posets. Fence posets are combinatorial objects that come up in a variety of settings, including cluster algebras, quiver representations, snake graphs and continued fractions (See \cite{MR3908893}, \cite{MR4158076}). Consider composition $\alpha=(\alpha_1,\alpha_2,\ldots,\alpha_s)$ of $n$, where the number $s$ of parts is called the \definition{length} of $\alpha$, denoted $\ell(\alpha)$ and $n$ is called the size of $\alpha$, denoted $|\alpha|$. We define an $n+1$ element poset $F(\alpha)$, called the \definition{fence poset} of $\alpha$ by taking the transitive closure of the following cover relations: 
	\begin{equation*}
		x_1\preceq x_2 \preceq \cdots\preceq x_{\alpha_1+1}\succeq x_{\alpha_1+2}\succeq \cdots\preceq x_{\alpha_1+\alpha_2+1}\preceq x_{\alpha_1+\alpha_2+2}\preceq\cdots\preceq x_{\alpha_1+\alpha_2+\alpha_3+1}\succeq \cdots
	\end{equation*}
	The maximal chain corresponding to each $\alpha_i$ is called a  \definition{segment} of the poset. If the length of $\alpha$ is even, we also define the \definition{circular fence poset} $\overline{F}(\alpha)$ corresonding to $\alpha$ by setting $x_{n+1}=x_1$ in $F(\alpha)$. Note that unlike the regular fence poset, the number of nodes in $\overline{F}(\alpha)$ matches the size of $\alpha$.
	
	\begin{example} For $\alpha=(2,1,1,3)$ the regular and circular fence posets are depicted in Figure~$\ref{fig:2113}$ below. \label{ex:rankpolyfor2113}
		
		\begin{figure}[ht]\centering
			\begin{tikzpicture}[scale=.7]
				\fill(0,1) circle(.1);
				\fill(1,2) circle(.1);
				\fill(2,3) circle(.1);
				\fill(3,2) circle(.1);
				\fill(4,3) circle(.1);
				\fill(5,7/3) circle(.1);
				\fill(6,5/3) circle(.1);
				\fill(7,1) circle(.1);
				\draw (0,1)--(2,3)--(3,2)--(4,3)--(7,1);
				\draw (0,.5) node{$x_1$};
				\draw (1,1.5) node{$x_2$};
				\draw (2,2.5) node{$x_3$};
				\draw (3,1.5) node{$x_4$};
				\draw (4,2.5) node{$x_5$};
				\draw (5,.5+4/3) node{$x_6$};
				\draw (6,.5+2/3) node{$x_7$};
				\draw (7,.5) node{$x_8$};
			\end{tikzpicture}\qquad
			\begin{tikzpicture}[scale=.7]
				\fill(1,1) circle(.1);
				\fill(2,2) circle(.1);
				\fill(3,1) circle(.1);
				\fill(4,2) circle(.1);
				\fill(5.25,2-2/3) circle(.1);
				\fill(6.5,2-4/3) circle(.1);
				\fill(4,0) circle(.1);
				\draw (4,0)--(1,1)--(2,2)--(3,1)--(4,2)--(6.5,2-4/3)--(4,0);
				\draw (4,-.5) node{$x_1$};
				\draw (1,.5) node{$x_2$};
				\draw (2,1.5) node{$x_3$};
				\draw (3.3,.7) node{$x_4$};
				\draw (4,1.5) node{$x_5$};
				\draw (5.25,1.5-2/3) node{$x_6$};
				\draw (6.5,1.5-4/3) node{$x_7$};
			\end{tikzpicture}
			\caption{Posets $F(2,1,1,3)$ (left) and $\overline{F}(2,1,1,3)$ (right).}\label{fig:2113} 
		\end{figure}
		
	\end{example}

	Down-closed subsets (subsets $I$ which satisfy $x\in I$, $y\preceq x \Rightarrow y \in I$) of a fence are called its \definition{ideals}. Ideals of a fence poset $F(\alpha)$ (resp. circular fence poset $\overline{F}(\alpha)$) ordered by inclusion have the structure of a distributive lattice ranked by the number of elements in each ideal. We denote this lattice by $J(\alpha)$ (resp. $\overline{J}(\alpha)$). The generating polynomials of $J(\alpha)$ and $\overline{J}(\alpha)$ are called the \definition{rank polynomial} of $\alpha$ and the \definition{circular rank polynomial} of $\alpha$ respectively:
	\begin{align*}
		\rank(\alpha;q)=& \sum_{I \in J(\alpha)} q^{|I|},\qquad \qquad \overline{\rank}(\alpha;q)= \sum_{I\in \overline{J}(\alpha)} q^{|I|}.
	\end{align*}
	
	\begin{example} \label{ex:2113} For the regular and circular fence posets for $\alpha=(2,1,1,3)$ illustrated in Figure \ref{fig:2113}, we get the following rank polynomials:
		\begin{align*}
			\rank((2,1,1,3);q)=&1+3q+5q^2+6q^3+6q^4+5q^5+3q^6+2q^7+q^8,\\
			\overline{\rank}((2,1,1,3);q)=& 1+2q+3q^2+4q^3+4q^4+3q^5+2q^6+q^7=[5]_q[4]_q.
		\end{align*}
	\end{example}

Rank polynomials recently came up in Ovsienko and Morier-Genoud's work \cite{originalconj} defining a $q$-deformation of rational numbers  using their continued fraction expansions. For a given rational number $r/t$, the $q$-deformation $[r/t]_q$ equals $R(q)/T(q)$ where $R(q), T(q) \in \mathbb{Z}[q]$ are polynomials that,  at $q=1$,  evaluate to $r$ and $t$ respectively. When $\frac{r}{t}\geq 1$, one can find a corresponding composition $\alpha_{r/t}$ such that $R(q)$ equals the rank polynomial of $\rank(\alpha_{r/t};q)$ and $T(q)$ equals the rank polynomial of the same fence poset with leftmost segment deleted. The exact nature of these constructions is described in Section \ref{sec:prelim} below. Ovsienko and Morier-Genoud's observations on this front led them to conjecture that the rank polynomials of fence posets are unimodal. This conjecture was extended to describe the interlacing properties in \cite{Saganpaper}, and the extended version was proved in \cite{ourpaper} using circular fence posets. It was further shown that:
	\begin{thm}[\cite{ourpaper}]
		Circular rank polynomials $\overline{\rank}((\alpha_1,\alpha_2,\ldots,\alpha_{2s});q)$ are symmetric with non-negative integer coefficients that remain constant under cyclic shifts of parts of $\alpha$.
		\begin{align}
			\overline{\rank}((\alpha_1,\alpha_2,\ldots,\alpha_{2s});q)=\overline{\rank}((\alpha_{2s},\alpha_1,\alpha_2,\ldots,\alpha_{2s-1});q). \label{eq:cyclic shift}
		\end{align} The polynomials are also unimodal if one of the following is satisfied:\label{thm:ourtheorem}
		\begin{itemize}[noitemsep]
			\item $\alpha_1+\alpha_2+\cdots+\alpha_{2s}$ is odd,
			\item $\alpha$ has two consecutive parts that are larger than $1$,
			\item $\alpha$ has three consecutive parts $k,1,l$ with $|k-l|>1$.
		\end{itemize} 
	\end{thm}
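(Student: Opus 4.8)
The plan is to route the whole statement through the rank-matrix calculus developed above. To an ascending segment of length $k$ one attaches a rank matrix $U_k$ and to a descending segment of length $k$ a rank matrix $D_k$, each a $2\times 2$ matrix with entries in $\NN[q]$, chosen so that gluing two segments at a shared endpoint is matrix multiplication and closing a fence up into a cycle is taking the trace. Concretely this gives
\[
	\overline{\rank}\big((\alpha_1,\alpha_2,\ldots,\alpha_{2s});q\big)=\operatorname{tr}\!\left(U_{\alpha_1}D_{\alpha_2}U_{\alpha_3}\cdots U_{\alpha_{2s-1}}D_{\alpha_{2s}}\right).
\]
Non-negativity of the coefficients is then immediate, since $\NN[q]$ is closed under matrix addition and multiplication, so every entry of the product --- in particular its trace --- lies in $\NN[q]$.

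For the cyclic-shift identity \eqref{eq:cyclic shift}, shifting $\alpha$ by two parts moves the two factors $U_{\alpha_{2s-1}}D_{\alpha_{2s}}$ from the end of the product to the front, so it is exactly the cyclic invariance of the trace. Shifting by a single part is the interesting case, and it is equivalent to the symmetry of $\overline{\rank}(\alpha;q)$: the circular fence of $(\alpha_2,\ldots,\alpha_{2s},\alpha_1)$ is isomorphic to the order dual of $\overline{F}(\alpha)$, and passing to the order dual sends each ideal $I$ to its complement, hence sends the coefficient of $q^i$ in the circular rank polynomial to that of $q^{|\alpha|-i}$. It therefore suffices to prove $\overline{\rank}(\alpha;q)=q^{|\alpha|}\,\overline{\rank}(\alpha;q^{-1})$, and in the matrix picture I would do this by checking that reading a fence upside down --- which interchanges ascending and descending segments --- is implemented by a fixed involution $M\mapsto JM^{\mathsf T}J$ with $J=\left(\begin{smallmatrix}0&1\\1&0\end{smallmatrix}\right)$ that intertwines $U_k\leftrightarrow D_k$ with the substitution $q\mapsto q^{-1}$ up to the expected power of $q$; since $\operatorname{tr}(M)=\operatorname{tr}(JM^{\mathsf T}J)$, the displayed trace is then manifestly reciprocal. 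Producing the explicit $U_k,D_k$ and verifying this relation is a finite $2\times2$ computation, but it is the step that carries the real content behind the symmetry.

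Unimodality is the substantive difficulty. Here I would expand the $2\times2$ product as $\left(\begin{smallmatrix}a&b\\ c&d\end{smallmatrix}\right)$, so that $\overline{\rank}(\alpha;q)=a+d$, where $a$ and $d$ are --- up to a power of $q$ --- rank polynomials of shorter \emph{linear} fences with prescribed membership at their two endpoints, obtained by cutting the cycle at a chosen edge. The goal is then to show that this particular sum of two linear rank polynomials is unimodal, using the interlacing properties of linear fence rank polynomials established in \cite{Saganpaper} and \cite{ourpaper}. This is not automatic: two symmetric unimodal polynomials whose centers of symmetry are far apart can sum to a non-unimodal polynomial, which is precisely why the theorem is conditional. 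The three listed hypotheses are exactly the configurations in which one can choose the cut so that $a$ and $d$ have centers of symmetry that coincide or differ by at most one --- when $|\alpha|$ is odd one instead peels off a single node and works with linear pieces whose interlacing is unconditional --- after which the relevant coefficient inequalities add termwise. The main obstacle is thus this case analysis: selecting the right cut in each of the three regimes and checking the centering via the sharpened interlacing lemmas for fences, which is where the bulk of the work lies.
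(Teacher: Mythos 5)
First, a point of comparison: the paper does not prove this theorem at all --- it is imported verbatim from \cite{ourpaper} as background --- so your proposal cannot be measured against an internal argument, only on its own terms. On those terms it is a reasonable outline that is consistent with the rank-matrix formalism developed here (the trace of a product of segment matrices does compute $\overline{\rank}$, two-step cyclic shifts are cyclicity of the trace, and a one-step shift is the order dual and hence equivalent to the reciprocity $\overline{\rank}(\alpha;q)=q^{|\alpha|}\overline{\rank}(\alpha;q^{-1})$), but both places where you yourself locate ``the real content'' are left undone, and one of them is wrong as stated. The involution $M\mapsto JM^{\mathsf T}J$ with $J=\left(\begin{smallmatrix}0&1\\1&0\end{smallmatrix}\right)$ does not intertwine the natural segment matrices: with the paper's generators $U_0=R_q$ and $D_0=R_q^2S_q$ one gets $JU_0^{\mathsf T}J=\left(\begin{smallmatrix}1&1\\0&q\end{smallmatrix}\right)$, which is not a power of $q$ times $D_0(q^{-1})$ (wrong zero pattern, no negative entry). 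A conjugation of the required type does exist, but the conjugator must depend on $q$: for instance $J=\left(\begin{smallmatrix}q^2+1&q^2\\1&q\end{smallmatrix}\right)$ satisfies $JU_0(q^{-1})^{\mathsf T}J^{-1}=q^{-1}D_0(q)$ and $JD_0(q^{-1})^{\mathsf T}J^{-1}=q^{-1}U_0(q)$. Even then you must account for the fact that transposition reverses the order of the product, so the trace you land on is that of the \emph{reversed} composition with ups and downs exchanged; closing the argument needs the additional (easy but unstated) observation that this circular fence is isomorphic to $\overline{F}(\alpha)$ via reflection of the circle. Also, non-negativity needs no matrix argument at all: $\overline{\rank}(\alpha;q)=\sum_{I}q^{|I|}$ is a generating function over ideals by definition.

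The more serious gap is unimodality, which is the entire substance of the theorem. Your proposal reduces it to ``cut the cycle so that the two linear rank polynomials $a$ and $d$ have aligned centers of symmetry, then add the interlacing inequalities termwise,'' but carries out none of the three cases. This is not a technicality: the sum of two symmetric unimodal polynomials with aligned centers need not be handled uniformly (the failure of unimodality for $(1,k,1,k)$ recorded in Conjecture 1.2 shows how close to the boundary these hypotheses sit), and in \cite{ourpaper} each of the three bullet conditions requires its own choice of cut and its own chain of coefficient inequalities, proved via explicit injections between ideal sets rather than by citing interlacing as a black box. As it stands, the unimodality half of your argument is a statement of intent, not a proof; to complete it you would need, for each hypothesis, to identify the cut, compute the degrees and centers of $a$ and $d$, and verify the termwise inequalities --- which is exactly the bulk of the original paper's work.
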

It was in fact conjectured in \cite{ourpaper} that the circular rank polynomial is unimodal if the number of segments is different than $4$.

\begin{conj}[\cite{ourpaper}] \label{ourconj} $\overline{\rank}(\alpha;q)$ is unimodal except for the cases $\alpha=(1,k,1,k)$ or $(k,1,k,1)$ for some positive integer $k$.
\end{conj}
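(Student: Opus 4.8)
The plan is to combine Theorem~\ref{thm:ourtheorem} with the rank-matrix machinery of the earlier sections (which writes a circular rank polynomial as a trace of a product of rank matrices, one per segment) and with the unimodality and interlacing of the ordinary rank polynomials $\rank(\mu;q)$ from \cite{ourpaper}.

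\emph{Reduction to a rigid family.} By Theorem~\ref{thm:ourtheorem} and the cyclic-shift invariance \eqref{eq:cyclic shift}, it suffices to treat the compositions that Theorem~\ref{thm:ourtheorem} leaves uncovered, namely those $\alpha=(\alpha_1,\dots,\alpha_{2s})$ with $|\alpha|$ even, with no two cyclically adjacent parts both $\geq2$, and with every cyclically consecutive triple $k,1,l$ satisfying $|k-l|\leq1$. Encoding such an $\alpha$ by its cyclic list of big parts $a_1,\dots,a_r\geq2$ and the blocks of $1$'s between them, the hypotheses turn into the rigid rules ``a block of at least two $1$'s is flanked by two $2$'s'' and ``big parts joined by a single $1$ differ by at most one''. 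A short check settles $2s=4$: the only survivors are $(1,k,1,k)$ and $(k,1,k,1)$ (the $k=1$ instance being $(1,1,1,1)$), precisely the exceptions asserted; so from now on $2s\geq6$.

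\emph{The ungluing identity and unimodality.} Cutting $\overline{F}(\alpha)$ at the identified vertex and sorting its ideals according to whether they meet the first and the last segment turns the trace formula into an identity
\[
  \overline{\rank}(\alpha;q)=\rank(\beta;q)+q\,\rank(\gamma;q),
\]
where $\gamma$ is $\alpha$ with its first and last segments shortened by one and $\beta$ is $\alpha$ with those segments deleted and the two new boundary segments shortened by one (a resulting $0$-part being absorbed). Both summands are unimodal by \cite{ourpaper}; moreover $q\,\rank(\gamma;q)$ carries the top degree $|\alpha|=\deg\overline{\rank}(\alpha;q)$, while $\rank(\beta;q)$ has strictly smaller degree. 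Since $\overline{\rank}(\alpha;q)$ is symmetric about $|\alpha|/2$ by Theorem~\ref{thm:ourtheorem}, it is enough to show its coefficients are nondecreasing up to $\lfloor|\alpha|/2\rfloor$, and I would deduce this from a sharpened interlacing for the nested fences underlying $\beta$ and $\gamma$: the low-degree coefficients of $q\,\rank(\gamma;q)$ are already increasing precisely on the range where the bump $\rank(\beta;q)$ is still nonzero, so adding $\rank(\beta;q)$ opens no valley near the centre. The $k,1,l$ and no-two-adjacent hypotheses are exactly what keep the two boundary segments of $\gamma$ long enough for this to hold; their degeneration is what makes $(1,k,1,k)$ fail. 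In favorable cases --- typically when $\alpha$ has a $1$ next to a sufficiently long block --- one can instead factor $\overline{\rank}(\alpha;q)$ as a product of smaller circular rank polynomials (in the extreme, a product of $q$-integers), so that induction together with the fact that a product of symmetric unimodal polynomials is symmetric unimodal finishes those cases and also shrinks the problem. The base case $\alpha=(1^{2s})$, whose circular fence is the poset with Hasse diagram the $2s$-cycle, is dispatched by a direct generating-function computation yielding a closed form for $\overline{\rank}((1^{2s});q)$ that is unimodal exactly when $2s\neq4$; the finitely many other small compositions left over are checked by inspection.

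\emph{Sharpness and the main obstacle.} Finally, computing $\overline{\rank}((1,k,1,k);q)$ explicitly exhibits a central coefficient strictly below its two neighbours, so these compositions --- and $(k,1,k,1)$ by \eqref{eq:cyclic shift} --- are genuinely non-unimodal, confirming that the exceptions are sharp. The step I expect to be hardest is the unimodality argument above: since ordinary rank polynomials are not symmetric, one must control exactly how the low-degree polynomial $\rank(\beta;q)$ sits inside the symmetric polynomial $q\,\rank(\gamma;q)$ --- uniformly over the whole rigid family, including compositions containing a part $3$ and long alternating runs of $1$'s and $2$'s --- and rule out any dip near the middle; packaging the interlacing of rank polynomials of nested fences strongly enough to cover all of these simultaneously is the real technical content.
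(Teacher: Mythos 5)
First, a point of order: the paper does not prove this statement. It is quoted verbatim as a conjecture from \cite{ourpaper}, and its resolution is only \emph{announced} here, deferred to the forthcoming paper \cite{chainlink}, where the stated method is the characteristic polynomial of the rank matrix $\clmat_q(\alpha)$ rather than any additive decomposition of $\overline{\rank}(\alpha;q)$. So there is no in-paper proof to compare you against; I can only assess your sketch on its own terms, noting that it takes a different route from the one the author actually used.

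Your reduction is sound: Theorem~\ref{thm:ourtheorem} together with cyclic invariance does leave exactly the rigid family you describe, and your check that the only surviving length-$4$ compositions are $(k,1,k,1)$ up to cyclic shift is correct (the parity of $|\alpha|$ combined with $|k-l|\leq 1$ forces the two big parts to coincide). The gap is in the central step. An identity of the shape $\overline{\rank}(\alpha;q)=\rank(\beta;q)+q\,\rank(\gamma;q)$ with both summands unimodal does not by itself give unimodality of the sum --- that is precisely the obstruction that left the conjecture open after \cite{ourpaper} --- and the ``sharpened interlacing'' you invoke to guarantee that adding $\rank(\beta;q)$ ``opens no valley near the centre'' is never formulated, let alone proved; you concede as much in your final paragraph. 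Since neither summand is symmetric and their modes are offset, controlling the coefficients near degree $|\alpha|/2$ uniformly over the whole rigid family (long alternating runs of $1$'s and $2$'s, a stray part equal to $3$, the degenerate $(1^{2s})$ case) is the entire content of the theorem, and your sketch replaces it with an expectation. The factorization identities of Section~\ref{sec:identities} (Id~1, Id~2) cover only very special shapes and cannot carry an induction over the full family. As written, this is a plausible plan of attack rather than a proof; the argument that actually closes the conjecture in \cite{chainlink} sidesteps this additive bookkeeping entirely by analyzing $\operatorname{tr}(\clmat_q(\alpha))$ through the characteristic polynomial of the matrix product.
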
	
	
	In \cite{leclere}, Leclere and Morier-Genoud work with a $q$-deformation of $\text{PSL}(2,\ZZ)$. For a $r/t$ with corresponding regular and negative continued fraction expressions  $[a_1,a_2,\ldots,a_{2m}]$ and $\llbracket  c_1,c_2,\ldots,c_k\rrbracket$ respectively, they work with matrices $M_q(c_1,c_2,\ldots,c_k)$ and 	$M_q^+(a_1,a_2,\ldots,a_{2m})$ in $\text{PSL}_q(2,\ZZ)$, previously defined in \cite{originalconj}. The top and bottom left entries of $M_q(c_1,c_2,\ldots,c_k)$ are given by $R(q)$ and $T(q)$ whereas the top and bottom left entries of $M_q^+(a_1,a_2,\ldots,a_{2m})$ are given by $q R(q)$ and $q T(q)$ for some $z\in\mathbb{Z}$. 
	%defined by the following $q$-versions of the standard generators considered modulo $\pm I$:
	%$$R_q=\begin{bmatrix} q&1\\0&1
	%\end{bmatrix}, \qquad S_q=\begin{bmatrix} 0&-q^{-1}\\1&0 \end{bmatrix}.$$
	%\footnote{In actuality, two such matrices are defined in \cite{leclere} for each $r/t$ that correspond to two different continious fraction expansions. As the two matrices differ only by a power of $R(q)$ we decided to limit our attentions to only one for the purposes of this work.}.
	
	The traces of $M_q(c_1,c_2,\ldots,c_k)$, up to a multiple $\pm q^N$, are shown to be symmetric polynomials on $q$ with non-negative integer entries. Also, in a similar flavor to the circular rank polynomials, they are invariant under cyclic shifts of the sequence $\llbracket c_1,c_2,\ldots,c_k\rrbracket$. Leclere and Morier-Genoud further conjecture the following:
	\begin{conj}[\cite{leclere}, Conjecture 3.12]\label{conj:leclere} For any sequence $c_1,c_2,\ldots,c_k$ of integers, the trace $\mathrm{tr}(M_q(c_1,c_2,\ldots,c_k))$ is a polynomial in $\ZZ[q,q^{-1}]$ with unimodal coefficients.
	\end{conj}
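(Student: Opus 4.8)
The plan is to pass from the matrix side to the poset side using the rank-matrix dictionary developed above, turning $\mathrm{tr}\,M_q(c_1,\dots,c_k)$ into a circular rank polynomial and then invoking the known unimodality results for those. Concretely: up to an overall factor $\pm q^{N}$, each generator $M_q(c)$ is the rank matrix of a chain (or a bounded product of such), matrix multiplication is the edge-linking operation, and the trace realizes the circular-closure operation; hence $\mathrm{tr}\,M_q(c_1,\dots,c_k)=\pm q^{N}\,\overline{\rank}(\beta;q)$ for an explicit composition $\beta=\beta(c_1,\dots,c_k)$ read off from the $c_i$ via the (cyclic) translation between negative and regular continued fractions. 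Since the statement ranges over all integer sequences, I would first normalize: using the relations in $\mathrm{PSL}_q(2,\ZZ)$ one reduces any $c$ to one with all entries $\ge 2$, at the cost of splitting off a short list of degenerate cyclic words for which $M_q(c)$ is (up to $\pm q^{\bullet}$) triangular or of such small size that the trace is a monomial or a sum of two monomials — all trivially unimodal — so it suffices to treat admissible $c$.

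With $\beta$ in hand, apply Theorem~\ref{thm:ourtheorem}: $\overline{\rank}(\beta;q)$ is symmetric with non-negative integer coefficients, invariant under cyclic rotation of the parts of $\beta$, and unimodal whenever $|\beta|$ is odd, or $\beta$ has two consecutive parts exceeding $1$, or $\beta$ has a window $k,1,l$ with $|k-l|>1$. It remains to handle the $\beta$ meeting none of these conditions. A short structural argument pins these down: ``no two adjacent parts $>1$'' forces $\beta$, up to rotation, to interleave $1$'s with larger parts, and then the failure of the $k,1,l$ condition forces all the larger parts to agree up to $1$; so the residual family is, up to rotation, $\beta=(1,m_{1},1,m_{2},\dots,1,m_{s})$ with each $m_{i}\in\{m,m+1\}$ for a single $m\ge 1$ — a ``thin'' family of circular fences.

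For this thin family I would compute $\overline{\rank}(\beta;q)$ outright. Its ideal lattice is governed by a one-step recursion on $q$-integers (a $2\times 2$ transfer matrix over $\mathbb{Z}[q]$), and the resulting polynomial factors into products of $q$-integers and short symmetric palindromes, in the spirit of the $[5]_q[4]_q$ appearing in Example~\ref{ex:2113}; from such a factorization unimodality can be read off directly. This step simultaneously settles Conjecture~\ref{ourconj}: the only members of the thin family failing unimodality are $\beta=(1,m,1,m)$ and its rotation $(m,1,m,1)$.

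The crux is therefore to show that these exceptional compositions are never hit: that no admissible integer sequence $c$ has $\beta(c)$ equal to $(1,m,1,m)$ or $(m,1,m,1)$. I would attack this by isolating an invariant of the cyclic word $c$ that survives the normalization moves — the natural candidates being the segment count and the parity/congruence data of $\beta(c)$ coming from the continued-fraction translation — and checking that the exceptional $\beta$'s fall outside its range. I expect this to be the hard part: the continued-fraction reductions interact delicately with the cyclic structure and with the ambient $\pm q^{\bullet}$ ambiguity, so identifying exactly which circular fences arise as traces of $M_q$, and in particular confirming that the non-unimodal ones do not, is the real content of the statement; granting it, the chain ``translation $\to$ Theorem~\ref{thm:ourtheorem} $\to$ thin-family computation'' closes the argument.
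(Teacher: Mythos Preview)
The proposal cannot succeed because the statement is false, and the paper does not prove it but rather disproves it. After establishing the identification $\mathrm{tr}(M_q(c_1,\dots,c_k))=\overline{\rank}((a_1-1,a_2,\dots,a_{2m});q)$ (Proposition~\ref{T:1}), the paper exhibits explicit counterexamples in Corollary~\ref{cor:counters}: $\mathrm{tr}(M_q(2^k))=1+q^k$ for $k\ge 2$, and $\mathrm{tr}(M_q(n+2,n+2))=\overline{\rank}((n,1,n,1);q)$, neither of which is unimodal.

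Your argument breaks at precisely these two places. First, when you dismiss the degenerate cyclic words as having trace ``a monomial or a sum of two monomials --- all trivially unimodal'': the sequence $1,0,\dots,0,1$ is \emph{not} unimodal, so $1+q^k$ already kills the conjecture. Second, and more substantially, your final step asserts that no admissible $c$ produces the exceptional compositions $(1,m,1,m)$ or $(m,1,m,1)$. This is exactly wrong. Running the conversion $[a_1,\dots,a_{2m}]\mapsto \llbracket c_1,\dots,c_k\rrbracket$ backward, the composition $(n,1,n,1)$ comes from $[n+1,1,n,1]$, which is $\llbracket n+2,n+2\rrbracket$; likewise $(1,n,1,n)$ comes from $[2,n,1,n]$, which is $\llbracket 3,2^{n-1},3,2^{n-1}\rrbracket$. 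Both are perfectly admissible sequences with all $c_i\ge 2$. So there is no invariant obstruction to be found: every circular fence of the form $(a_1-1,a_2,\dots,a_{2m})$ with $a_i\ge 1$ arises from some admissible $c$, including the non-unimodal ones. The ``hard part'' you flagged is not hard but impossible, and the correct conclusion is that Conjecture~\ref{conj:leclere} fails, with the full list of exceptions (for $c_i\ge 2$) conjecturally given by Conjecture~\ref{con:lec+}.
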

	
	The matrices $M_q$ and $M^+_q$ as rank matrices of oriented posets, which leads us to conclude the $\mathrm{tr}(M(c_1,c_2,\ldots,c_k))$ is indeed a circular rank polynomial. In fact, one can show that (see Proposition \ref{T:1}) for $r/t\geq 1$ with regular and negative continued fraction expressions given by  $[a_1,a_2,\cdots,a_{2m}]$ and $\llbracket c_1,c_2,\ldots,c_k\rrbracket$ respectively, we have:
		\begin{align*}
			\mathrm{tr}(M_q(c_1,c_2,\ldots,c_k)&=\overline{\rank}((a_1-1,a_2,\ldots,a_{2m});q),\\
			\mathrm{tr}(M_q^+(a_1,a_2,\ldots,a_{2m}))&=\overline{\rank}((a_1,a_2,\ldots,a_{2m});q).
		\end{align*}
	This implies in particular that Conjecture \ref{conj:leclere} does not hold, but one can apply the results from Theorem \ref{thm:ourtheorem} to characterize large families where we have unimodality.
	
	In Section \ref{sec:oriented posets}, we define \emph{oriented posets} $\overrightarrow{P}:=(P,x_L,x_R)$ with specialized start and end points. Instead of rank polynomials, these posets come with $2\times 2$ \emph{rank matrices} $\rmm_q(\overrightarrow{P})$, where multiplication corresponds to combining two posets, and we can connect the two endpoints of a poset by simply taking the trace. In particular, any fence poset can be built from copies of the poset $\overrightarrow{\bullet}$ consisting of a single node and made circular by taking the trace (See Section \ref{sec:fence}). This allows us prove Proposition \ref{T:1} as well as construct new identities between infinite families of rank polynomials in Section \ref{sec:identities}. 
	
	\begin{thmn}[\ref{thm:3}] Let $X$ be a palindromic composition with an even number of parts. For $k\geq 1$, $r\geq1$ we have:
		\begin{align}
	\overline{\rank}((1,k,r+1,X,r);q)&= [k+1]_q \cdot \overline{\rank}((r+2,X,r);q), \tag{Id 1}\\
	\overline{\rank}((k,1,k+r,X,r);q)&=[k+1]_q \cdot
	\overline{\rank}((k+r+1,X,r);q)\tag{Id 2}.
\end{align}
	\end{thmn}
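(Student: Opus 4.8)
The plan is to reduce both identities to matrix computations using the rank matrix formalism from Section~\ref{sec:oriented posets}. Since any fence poset (circular or not) is assembled from copies of the single-node oriented poset $\overrightarrow{\bullet}$ and circularized by taking a trace, the circular rank polynomial $\overline{\rank}(\beta;q)$ for a composition $\beta=(\beta_1,\dots,\beta_{2t})$ should be expressible as $\mathrm{tr}\bigl(A_1 A_2 \cdots A_{2t}\bigr)$, where each $A_i$ is a product of the elementary rank matrices attached to an up-run or down-run of length $\beta_i$ (these are the $2\times 2$ matrices implicit in Leclere--Morier-Genoud's $M_q$; I will use whatever explicit form Section~\ref{sec:fence} provides). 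The key structural input is that a segment of length $m$ going ``up'' contributes a matrix like $U_m = \left(\begin{smallmatrix} [m+1]_q & q^{?}\,[m]_q \\ [m]_q & q^{?}\,[m-1]_q\end{smallmatrix}\right)$ (up to normalization) and a ``down'' segment contributes its transpose-type partner, and that $U_1$ is the primitive generator. I would first record the exact elementary matrices $L,R$ (for a single node added on the left-leaning vs.\ right-leaning side) and the closed forms for a whole segment, so that $\overline{\rank}((1,k,r+1,X,r);q)$ and $\overline{\rank}((r+2,X,r);q)$ become explicit traces of matrix products differing only in a controlled prefix.

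Next, the heart of the argument is a \emph{local matrix identity}: I want to show that the matrix product corresponding to the prefix block $(1,k,r+1)$ equals $[k+1]_q$ times the matrix product corresponding to the prefix block $(r+2)$, \emph{after} accounting for the shared tail $(X,r)$ and the cyclic closure. Concretely, writing $P_{(1,k,r+1)}$ and $P_{(r+2)}$ for the respective segment-matrix products, the claim (Id~1) follows from $P_{(1,k,r+1)} = [k+1]_q\, P_{(r+2)}$ as $2\times2$ matrices — or, if that is too strong, from the weaker statement that these two matrices agree after multiplying on both sides by the tail matrix $P_{(X,r)}$ and taking the trace, using the cyclic-shift invariance \eqref{eq:cyclic shift} and the palindromicity of $X$ (which makes $P_X$ symmetric, hence equal to its own transpose, a fact I expect to need when comparing $\mathrm{tr}(P_{\mathrm{prefix}} P_X P_r)$ with its reversal). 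For (Id~2), the same scheme applies with the prefix $(k,1,k+r)$ versus $(k+r+1)$; I expect the algebra to be parallel, with the roles of the two columns of the elementary matrices swapped because the first part is now a ``down'' run of length $k$ rather than length $1$.

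The computation itself should be a finite check with $q$-integers: expand the relevant $2\times 2$ products, collect entries as $\ZZ[q]$-combinations of $[j]_q$'s, and verify the scalar factor $[k+1]_q$ using standard identities such as $[a]_q[b]_q - [a-1]_q[b+1]_q = q^{a-1}[b-a+1]_q$ and $[a+b]_q = [a]_q + q^a[b]_q$. I would organize this as: (i) prove the prefix-matrix identity for the ``bare'' case $X$ empty and $r$ minimal, (ii) observe that appending $X$ and $r$ only right-multiplies by a fixed matrix, so the scalar factor survives the trace, (iii) invoke palindromicity of $X$ to handle the direction mismatch between $\overline{\rank}((1,k,r+1,X,r);q)$ read forwards and the cyclically shifted reading that puts the prefix at the end.

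The main obstacle I anticipate is step (iii): making the cyclic/reversal bookkeeping airtight. Taking a trace is reversal-invariant only up to transposing the whole product, and the elementary up/down matrices are not individually symmetric, so I need the palindromicity of $X$ (together with the specific placement of the lone part $r$ at the cyclic seam) to convert $(P_{\mathrm{prefix}}P_X P_r)^{T}$ back into something with the same trace as $P_{\mathrm{prefix}}P_X P_r$. If the naive matrix identity $P_{(1,k,r+1)}=[k+1]_q P_{(r+2)}$ fails, the fallback is to prove the identity only at the level of $\mathrm{tr}(\,\cdot\,P_{(X,r)})$ for every symmetric $P$, which is a genuinely weaker (and I believe true) statement and still yields (Id~1)--(Id~2); verifying \emph{that} reduces to checking a single linear relation among the four trace-type functionals $M \mapsto \mathrm{tr}(E\,M)$ as $E$ ranges over the matrix units, which is again a routine $q$-integer computation.
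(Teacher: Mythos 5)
Your overall framework matches the paper's: express circular rank polynomials as traces of products of the elementary matrices $U_0=R_q$ and $D_0=R_q^2S_q$, isolate the prefix block, and show that the discrepancy between the two sides has vanishing trace against the shared tail. The factor $[k+1]_q$ does come out exactly as you hope, because
$(D_0)^k-[k+1]_q I=[k]_q\left[\begin{smallmatrix}0&-q\\1&-1-q\end{smallmatrix}\right]$
is a fixed matrix up to the scalar $[k]_q$, so both (Id 1) and (Id 2) collapse to a single $k$-independent linear condition on the tail matrix $\clmat_q(\rho)$ with $\rho=(r,X,r)$. Up to that point your plan is sound and is essentially the paper's reduction.

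The gap is in how you verify that linear condition. First, your primary claim $P_{(1,k,r+1)}=[k+1]_q\,P_{(r+2)}$ is false as a matrix identity (the difference is $[k]_q\,U_0\left[\begin{smallmatrix}0&-q\\1&-1-q\end{smallmatrix}\right]U_0\neq 0$); only its trace against the right class of tails vanishes, so everything rests on your step (iii). Second, and more seriously, the mechanism you propose there --- that palindromicity of $X$ makes $P_X$ symmetric, hence equal to its own transpose --- is simply wrong: already $\clmat_q((1,1))=U_0D_0=\left[\begin{smallmatrix}1+q+q^2&-q^2\\1&0\end{smallmatrix}\right]$ is not symmetric. Reversing a composition turns up-runs into down-runs, so palindromicity makes the $U_0/D_0$-word invariant under reversal \emph{combined with} swapping $U_0\leftrightarrow D_0$, not under reversal alone; plain transposition gives you nothing. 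What actually closes the argument (the paper's Lemma~\ref{lem:palin}) is that the specific functional $\Psi^+(M)=m_{11}+m_{12}-(1+q)m_{21}-m_{22}$ vanishes on $\clmat_q(\rho)$ for every even palindromic $\rho$, proved by induction from the middle outward using $\Psi^+(U_0MD_0)=\Psi^+(D_0MU_0)=q\,\Psi^+(M)$ and $\Psi^+(I)=0$. This is precisely the ``single linear relation'' you gesture at in your fallback, but it requires this induction (or, equivalently, the twisted symmetry $S\,\clmat_q(\rho)^T S^{-1}=\clmat_q(\rho)$ with $S=\left[\begin{smallmatrix}1+q&1\\1&1\end{smallmatrix}\right]$, which conjugate-transposes $U_0$ into $D_0$), not transpose-symmetry of $P_X$. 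Without that lemma your proof does not close; with it, your reduction goes through essentially as in the paper. The cyclic-seam bookkeeping you worry about is a non-issue: cyclic invariance of the trace places the prefix wherever you like, and no reversal of the product is ever needed.
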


As an application, we give a combinatorial model for the $q$-deformed Markov numbers in terms of fence posets.
Solutions of the Markov equation $x^2+y^2+z^2=3xyz$ are called \definition{Markov triples}. Defined by A. Markoff in his thesis in 1880~\cite{thesis}, they are motivated by the study of rational approximations and Diophantine analysis. All Markov triples can be calculated from the initial solution $(1,1,1)$ via operations of the form $(x_0,y_0,z_0) \rightarrow (x_0,y_0, (x_0^2+y_0^2)/z_0)$. The solutions form an infinite binary tree called the \emph{Markov tree}. Some equivalent constructions are Christoffel words under the operation $(u,uv,v)\rightarrow (u,uuv,uv)$, Cohn matrices under matrix multiplication, simple fractions in the interval $[0,1]$ under the Farey sum, and mutations of the quiver representation for a torus with one marked point (\cite{markofftree}, \cite{chris}, \cite{MR4103773}).  Frobenius conjectured in $1912$ that each Markov number appears as the maximum of a unique Markov triple, a conjecture still open to this day (The interested reader is refered to  the 2012 book \cite{aignerbook} on the subject).
Markov numbers were recently $q$-deformed using Cohn matrices (  \cite{leclere}, \cite{kogiso}). In Section \ref{sec:cohn}, we use oriented posets to give a combinatorial model to directly calculate $q$-deformed Markov numbers.

\begin{thmn}[\ref{thm:markov}] The $q$-deformed Markov number corresponding to the Christoffel $ab$-word $w(a,b)=a\hat{w}(a,b)b$ is the rank polynomial of the circular fence poset $\overline{F}(3,1,\alpha(\hat{w}))$, where $\alpha(\hat{w})$ is formed by replacing each $a$ by the pair $1,1$ and each $b$ by the pair $2,2$ in $\hat{w}$.
\end{thmn}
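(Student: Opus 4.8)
The plan is to realize the $q$-Cohn matrices of \cite{leclere}, \cite{kogiso} as rank matrices of oriented posets, so that the trace computing the $q$-deformed Markov number becomes a circular rank polynomial, and then to put that polynomial into the claimed form by a single application of Theorem~\ref{thm:3}. Recall that the $q$-Markov number attached to the Christoffel word $w$ is obtained, up to a power of $q$ and the normalizing factor $[3]_q$ (the $q$-analog of the classical factor $3$), as the trace of the $q$-Cohn matrix $\mathrm{Cohn}_q(w)$: the image of $w$ under the monoid homomorphism sending $a$ and $b$ to the two $q$-deformed generating matrices. The first step is to identify these generators. Comparing with the single-node building block $\rmm_q(\overrightarrow{\bullet})$ and the ``up''/``down'' linking of Section~\ref{sec:fence}, I would check that the generator for $a$ agrees, up to an overall power of $q$, with the rank matrix $\rmm_q(\overrightarrow{P})$ of the oriented fence poset with segment composition $(1,1)$, and that the generator for $b$ corresponds to the composition $(2,2)$. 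This is exactly the substitution $a\mapsto 1,1$, $b\mapsto 2,2$ at the level of building blocks.

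With the generators identified, the monoid structure does the rest: linking oriented posets corresponds to multiplying rank matrices, and identifying the two endpoints of a linked chain corresponds to taking the trace (Section~\ref{sec:oriented posets}), so $\mathrm{tr}(\mathrm{Cohn}_q(w))$ is, up to a power of $q$, the polynomial $\overline{\rank}$ of the circular fence poset obtained by linking the blocks for the successive letters of $w=a\hat w b$ and closing up. Since $w$ begins with the letter $a$ and ends with the letter $b$, the block $(1,1)$ and the block $(2,2)$ become adjacent around the circle, and reading off the cyclic sequence of segments gives the composition $(1,1,\alpha(\hat w),2,2)$. Thus, up to a power of $q$, $\mathrm{tr}(\mathrm{Cohn}_q(w))=\overline{\rank}((1,1,\alpha(\hat w),2,2);q)$.

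It then remains to establish the identity $\overline{\rank}((1,1,\alpha(\hat w),2,2);q)=[3]_q\cdot\overline{\rank}((3,1,\alpha(\hat w));q)$; after this the factor $[3]_q$ cancels against the normalization and (fixing the power of $q$ so that both sides are genuine polynomials with constant term $1$) the $q$-Markov number becomes $\overline{\rank}((3,1,\alpha(\hat w));q)$, as claimed. Here Theorem~\ref{thm:3} enters: since $\hat w$ is a palindrome, $\alpha(\hat w)$ is a palindromic composition with an even number of parts, so (Id~1) applies with $k=2$, $r=1$, $X=\alpha(\hat w)$. Using the cyclic-shift invariance of Theorem~\ref{thm:ourtheorem} together with the evident invariance of $\overline{\rank}$ under reversing the order of the parts (the reversed circular fence poset is isomorphic to the original), one rewrites
\begin{equation*}
\overline{\rank}((1,1,\alpha(\hat w),2,2))=\overline{\rank}((1,2,2,\alpha(\hat w),1))=[3]_q\,\overline{\rank}((3,\alpha(\hat w),1))=[3]_q\,\overline{\rank}((3,1,\alpha(\hat w))),
\end{equation*}
the middle equality being (Id~1) and the outer two being dihedral rearrangements of the cyclic composition.

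The main obstacle is the first step together with the boundary bookkeeping in the second: one has to pin down exactly which $q$-deformed matrices the $q$-Cohn generators are, and with which power-of-$q$ normalization (they are only canonically defined in $\mathrm{PSL}_q(2,\ZZ)$), and then see precisely how the leading letter $a$, the trailing letter $b$, and the normalizing factor $[3]_q$ conspire, upon closing the linked chain into a circle, to produce the asymmetric prefix $(3,1)$ rather than the naive $(1,1),(2,2)$. Once the intermediate composition $(1,1,\alpha(\hat w),2,2)$ is correctly identified, what is left is the short dihedral manipulation above and a single application of Theorem~\ref{thm:3}.
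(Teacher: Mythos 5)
Your proposal follows essentially the same route as the paper: identify the $q$-Cohn generators $[A]_q$, $[B]_q$ with rank matrices of the oriented posets for $(1,1)$ and $(2,2)$ (the paper uses the dual rank matrices $\drm_q(\overrightarrow{P_A})$, $\drm_q(\overrightarrow{P_B})$, with no extra power of $q$ needed), so that the trace is $\overline{\rank}((1,1,\alpha(\hat w),2,2);q)$, and then apply (Id~1) of Theorem~\ref{thm:3} with $k=2$, $r=1$ after the same dihedral rearrangement to $(1,2,2,\alpha(\hat w),1)$. This matches the paper's argument step for step.
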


We finish with listing some future research directions and references to some recent work using oriented posets including a resolution of Conjecture~\ref{ourconj} (\cite{chainlink}).
	
	\section{Preliminaries and Notation}\label{sec:prelim}
	
	For visual clarity, we at times use the notation $\cdot$ for both regular and matrix multiplication. 
	
	\subsection{Fence Posets and Rank Polynomials}
	
	A composition $\alpha=(\alpha_1,\alpha_2,\ldots,\alpha_s)$ of $n$ is a list of non-negative integers that add up to $n$. Here $n$ is called the \definition{size} of $\alpha$, denoted $|\alpha|$, and $s$ is called the \definition{length} of $\alpha$. During this work, there are multiple instances when we add extra parts to a composition from the left. In that case, we slightly abuse notation and use  $(i,\alpha)$ to denote the composition $(i,\alpha_1,\alpha_2,\ldots,\alpha_s)$. We also use the notation $a^b$ to denote $b$ consecutive parts of size $a$.
	
	We follow the convention of \cite{Saganpaper} to start our fence posets with an up step. The rank polynomials for the posets starting with down steps can easily be inferred from these for the following reason: The complement of any ideal is an up-closed subset of the poset (satisfies $x\in I$, $y\succeq x \Rightarrow y \in I$). If we flip a fence poset $F(\alpha)$ upside down, the resulting poset has the rank polynomial given by reversing the coefficients of $\alpha$:
	\begin{align*}
		\rank^\updownarrow(\alpha;q)&=q^{|\alpha+1|}\cdot \rank(\alpha;q^{-1}).
	\end{align*}
When the fence we are considering is circular, turning the poset upside down is the same thing as cyclicly shifting segments by one, so the symmetry of the rank polynomial is actually equivalent to the invariance under cyclic shifts.
	\begin{align*}
	\overline{\rank}^\updownarrow(\alpha;q)&=\overline{\rank}((\alpha_{2s},\alpha_1,\alpha_2,\ldots,\alpha_{2s-1});q)=q^{|\alpha|}\cdot \overline{\rank}(\alpha,q^{-1})=\overline{\rank}(\alpha;q).
\end{align*}
	\begin{figure}[ht]\centering
	\begin{tikzpicture}[scale=.7]
		\fill(0,-1) circle(.1);
		\fill(1,-2) circle(.1);
		\fill(2,-3) circle(.1);
		\fill(3,-2) circle(.1);
		\fill(4,-3) circle(.1);
		\fill(5,-7/3) circle(.1);
		\fill(6,-5/3) circle(.1);
		\fill(7,-1) circle(.1);
		\draw (0,-1)--(2,-3)--(3,-2)--(4,-3)--(7,-1);
		\draw (0,-1.5) node{$x_1$};
		\draw (1,-2.5) node{$x_2$};
		\draw (2,-3.5) node{$x_3$};
		\draw (3,-2.5) node{$x_4$};
		\draw (4,-3.5) node{$x_5$};
		\draw (5,-1.5-4/3) node{$x_6$};
		\draw (6,-1.5-2/3) node{$x_7$};
		\draw (7,-1.5) node{$x_8$};
	\end{tikzpicture}\qquad
	\begin{tikzpicture}[scale=.7]
		\fill(1,-1) circle(.1);
		\fill(2,-2) circle(.1);
		\fill(3,-1) circle(.1);
		\fill(4,-2) circle(.1);
		\fill(5.25,-2+2/3) circle(.1);
		\fill(6.5,-2+4/3) circle(.1);
		\fill(4,0) circle(.1);
		\draw (4,0)--(1,-1)--(2,-2)--(3,-1)--(4,-2)--(6.5,-2+4/3)--(4,0);
		\draw (4,-.5) node{$x_1$};
		\draw (1,-1.5) node{$x_2$};
		\draw (2,-2.5) node{$x_3$};
		\draw (3.3,-1.7) node{$x_4$};
		\draw (4,-2.5) node{$x_5$};
		\draw (5.25,-2.5+2/3) node{$x_6$};
		\draw (6.5,-2.5+4/3) node{$x_7$};
	\end{tikzpicture}
	\caption{Posets $F^\updownarrow(2,1,1,3)$ (left) and $\overline{F}^\updownarrow(2,1,1,3)=\overline{F}(1,1,3,2)$ (right).}\label{fig:2113rev} 
\end{figure}

	\begin{example} The upside down reflection of the regular and circular fence posets for $\alpha=(2,1,1,3)$ are illustrated in Figure \ref{fig:2113rev}. The corresponding rank polynomials are:
	\begin{align*}
		\rank^\updownarrow((2,1,1,3);q)&=q^8 \cdot \rank((2,1,1,3);q^{-1})=1+2q+3q^2+5q^3+6q^4+6q^5+5q^6+3q^7+q^8,\\
			\overline{\rank}^\updownarrow((2,1,1,3);q)&=\overline{\rank}((2,1,1,3);q)= 1+2q+3q^2+4q^3+4q^4+3q^5+2q^6+q^7=[5]_q[4]_q.
	\end{align*}
\end{example}
	\subsection{$q$-Rationals}

	For a given rational number $r/t$, two natural ways to express  $r/t$ as a continued fraction are:
	\begin{align}
		\displaystyle
		\frac{r}{t}=a_1+\cfrac{1}{a_2+\cfrac{1}{a_3+\cfrac{1}{\ddots+\cfrac{1}{a_{2m}}}}}=c_1-\cfrac{1}{c_2-\cfrac{1}{c_3-\cfrac{1}{\ddots-\cfrac{1}{c_k}}}}\label{eq:twoqfractions}
	\end{align}
	
	The expressions $[a_1,a_2,\ldots,a_{2m}]$ and $\llbracket c_1,c_2,\ldots,c_k\rrbracket$ are called the regular and negative continued fractions for $r/t$ respectively. They are unique under the conditions $a_i\in \mathbb{Z}$, $a_i\geq 1$ for $i\geq 2$ for the regular case and $c_i\in \mathbb{Z}$, $c_i\geq 2$ for $i\geq 2$ for the negative case. Furthermore, one can easily convert one expression into the other via: 
	\begin{align}\label{eq:atocconversion}
		\llbracket c_1,c_2,\ldots,c_k\rrbracket=\llbracket a_1+1,2^{a_2-1},a_3+2,2^{a_4-1},a_5+2,\ldots,a_{2m-1}+2,2^{a_{2m}-1}\rrbracket
	\end{align} where $2^t$ stands for $t$ parts of size $2$. 
	The $q$-analogue for $\frac{r}{t}$ is then obtained by replacing the terms with powers of $q$ and $q$-integers in the following way:
	
	\begin{align}\displaystyle \label{eq:doubleq}
		\left [\frac{r}{t}\right]_q:=[a_1]_q+\cfrac{q^{a_1}}{[a_2]_{q^{-1}}+\cfrac{q^{-a_2}}{\ddots+\cfrac{q^{a_{2m-1}}}{[a_{2m}]_{q^{-1}}}}}=[c_1]_q-\cfrac{q^{c_1-1}}{[c_2]_q-\cfrac{q^{c_2-1}}{\ddots-\cfrac{q^{c_{k-1}-1}}{[c_k]_q}}}
	\end{align}
	
	Here, for a negative integer $-n$, we take $[-n]_q=-q^{-1}-q^{-2}-\cdots-q^{-n}$.
	
	The non-trivial fact that the two $q$-expressions given in Equation \ref{eq:twoqfractions} give the same $q$-analogue is shown in \cite{originalconj}. It is further shown that the said $q$-analogue equals a rational function $\frac{R(q)}{T(q)}$ with polynomials $R(q), T(q)$ evaluating to $r$ and $s$ respectively at $q=1$. 
	
	\begin{example}\label{ex:32/9} The fraction $32/7$ has the regular and negative continued fraction expressions $[3,1,1,4]$ and $[4,3,2,2,2]$.
\[\displaystyle\frac{32}{9}= 3+\cfrac{1}{1+\cfrac{1}{1+\cfrac{1}{4}}}=4-\cfrac{1}{3-\cfrac{1}{2-\cfrac{1}{2-\cfrac{1}{2}}}}.
\]
	We can calculate its $q$-analogue via Equation~\ref{eq:doubleq}.
		\begin{align*}\displaystyle\left [\frac{32}{9}\right]_q&= [3]_q+\cfrac{q^3}{[1]_{q^{-1}}+\cfrac{q^{-1}}{[1]_q+\cfrac{q}{[4]_{q^{-1}}}}}=[4]_q-\cfrac{q^4}{[3]_q-\cfrac{q^3}{[2]_q-\cfrac{q^2}{[2]_q-\cfrac{q^2}{[2]_q}}}}\\
			&=\frac{1 + 3q + 5q^2 + 6q^3 + 6q^4 + 5q^5 + 3q^6 + 2q^7 +  q^8}{1 + 2q + 2q^2 + 2q^3 +  q^4 +  q^5}=\frac{\rank((2,1,1,3);q)}{\rank((1,3);q)}.
		\end{align*}
	
	\end{example}
	
	In the case $\frac{r}{t}\geq 1$, the coefficient sequences satisfy $a_1\geq 1$ and $c_1\geq 2$, and it is possible to describe $R(q)$ and $T(q)$ in terms of rank polynomials as in Example~\ref{ex:32/9} above (The other cases can also be reduced to this case using the identity $[x+1]_q=q[x]_q+1$ from \cite{originalconj}). 
	
	For the rest of this work, we assume $\frac{r}{t}\geq 1$ with continued fraction expressions $[a_1,a_2,\ldots,a_{2m}]$ and $\llbracket c_1,c_2,\ldots,c_k\rrbracket$. In \cite{originalconj}, it is shown that one has: 
	\begin{align*}
		R(q)&=\rank((a_1-1,a_2,a_3,\ldots,a_{2m}-1);q),\\
		T(q)&=\rank((0,a_2-1,a_3,\ldots,a_{2m}-1);q).
	\end{align*} where if the first non-zero part of the composition is the second part, we take the fence poset that starts with a down step instead of an up step.
	
	This means, if we set $\alpha_{r/t}:=(a_1-1,a_2,a_3,\ldots,a_{2m}-1)$, $R(q)$ is given by the rank polynomial of $F(\alpha_{r/t})$ and $T(q)$ is the rank polynomial of the same poset with the leftmost segment deleted. Conversely, for a composition $\alpha=(u_1,d_1,u_2,d_2,\ldots,u_s,d_s)$ (here we allow $d_s=0$ for full generality), $\alpha=\alpha_{r/t}$ for the rational number $r/t$ with the following continued fraction expressions:
	\begin{align}\label{eq:alphatoatocconverion}
	    [u_1+1,d_1,u_2,d_2,\ldots,u_s,d_s+1]= \llbracket u_1+2,2^{d_1-1},u_2+2,2^{d_2-1},\ldots,u_s+1,2^{d_s}\rrbracket.
	\end{align}
	Note that if we further allow $u_1=0$, corresponding to the case of the fence posets starting with a down step, we can recover all possible $r/t\geq 1$ in this manner.
	
	\subsection{ $\text{PSL}_q(2,\ZZ)$} 
	In \cite{leclere}, Leclere and Morier-Genoud work with the $q$-deformation of $\text{PSL}(2,\ZZ)$ defined by the following $q$-versions of the standard generators considered modulo $\pm I$:
	
	\begin{align*}R_q=\begin{bmatrix} q&1\\0&1
		\end{bmatrix}, \qquad S_q=\begin{bmatrix} 0&-q^{-1}\\1&0 \end{bmatrix}.\end{align*}
	
	They set
	\begin{align}
		M_q(c_1,c_2,\ldots,c_k)&:=R_q^{c_1}S_q R_q^{c_2}S_q \cdots R_q^{c_k} S_q %\tag*{}\\&=\begin{bmatrix}
		% [c_1]_q &-q^{c_1-1}\\1 &0
		%\end{bmatrix}\begin{bmatrix}
		% [c_2]_q &-q^{c_2-1}\\1 &0
		%\end{bmatrix}\raisebox{.3cm}{$\cdots$}\begin{bmatrix}
		% [c_k]_q &-q^{c_k-1}\\1 & 0
		%\end{bmatrix}
		,\label{eq:pic}\\
		M^+_q(a_1,a_2,\ldots,a_{2m})&:=R_q^{a_1}(R_qS_qR_q)^{a_2} R_q^{a_3}(R_qS_qR_q)^{a_4}\cdots R_q^{a_{2m-1}}(R_qS_qR_q)^{a_{2m}}
		%\tag*{} \\&=\begin{bmatrix}
		%q^{a_1} &[a_1]_q\\0 &1
		%\end{bmatrix}\begin{bmatrix}
		% q^{a_2} &0\\q[a_2]_q &1
		%\end{bmatrix}\cdots\begin{bmatrix}
		%q^{a_{2m-1}} &[a_{2m-1}]_q\\0 &1
		%\end{bmatrix}\begin{bmatrix}
		% q^{a_{2m}} &0\\q[a_{2m}]_q &1
		%\end{bmatrix}
		\label{eq:pia}.
	\end{align}
	
	Here, the top left and bottom left entries of $M_q(c_1,c_2,\ldots,c_k)$ are given by $R(q)$ and $T(q)$ respectively; whereas top left  and bottom left entries of $M_q^+(a_1,a_2,\ldots,a_{2m})$ are given by $q R(q)$ and $q T(q)$ (\cite{originalconj}, \cite{leclere}). 
	
	\begin{lemma}[\cite{leclere}]Every matrix $M\in \mathrm{PSL}_q(2,\ZZ)$ can be written as $M_q(c_1,c_2,\ldots,c_k)$ for some integers $c_i\in \ZZ$. Furthermore, the trace of $M$ equals, up to a multiplicative factor of $q^{\pm N}$ for some $N\in \ZZ$ the trace of some $M_q(c_1,c_2,\ldots,c_k)$ with all $c_i\geq 2$.
	\end{lemma}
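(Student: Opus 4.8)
The plan is to peel the $q$ off and land back in the classical combinatorics of $\mathrm{PSL}(2,\ZZ)$. The first thing I would do is record the three relations among the generators, each obtained by a one-line matrix multiplication:
\[
R_q^{a}R_q^{b}=R_q^{a+b}\ \ (a,b\in\ZZ),\qquad S_q^{2}=-q^{-1}I,\qquad (R_qS_q)^{3}=-I .
\]
Since $S_q^{2}$ and $(R_qS_q)^{3}$ are scalar, in $\mathrm{PSL}_q(2,\ZZ)$ (where scalar matrices are trivial) $S_q$ is an involution, $R_qS_q$ has order $3$, and the powers $R_q^{n}$ form an infinite cyclic subgroup with $R_q^{0}=I$; these are precisely the relations of the presentation $\mathrm{PSL}(2,\ZZ)=\langle s,u\mid s^{2}=u^{3}=1\rangle$ (this identification is the one established in \cite{originalconj}), so the reduction theory I invoke below is the classical one.

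\emph{First statement.} By definition $\mathrm{PSL}_q(2,\ZZ)$ is generated by $R_q$ and $S_q$, so a given $M$ is a word in $R_q^{\pm1},S_q^{\pm1}$. Rewrite it: replace each $S_q^{-1}$ by $S_q$ (legitimate since $S_q^{2}$ is scalar), merge consecutive powers of $R_q$ into single exponents via $R_q^{a}R_q^{b}=R_q^{a+b}$, and delete consecutive pairs $S_qS_q$ via $S_q^{2}=I$. What remains is an alternating word $R_q^{c_1}S_qR_q^{c_2}S_q\cdots$ with $c_i\in\ZZ$; if it ends in a power of $R_q$ multiply on the right by $S_q^{2}=I$, turning the tail $R_q^{c_j}$ into $R_q^{c_j}S_qR_q^{0}S_q$ (and if $M$ is the identity, $M=S_q^{2}=M_q(0,0)$). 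In every case $M=M_q(c_1,\dots,c_k)$ for some $c_i\in\ZZ$.

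\emph{Second statement.} Now I would normalise the $c_i$ to be $\ge2$ using three trace‑rescaling moves on the index sequence, plus the inverse of the third. A cyclic rotation $(c_1,\dots,c_k)\mapsto(c_2,\dots,c_k,c_1)$ replaces $M_q$ by a conjugate (conjugation by the prefix $R_q^{c_1}S_q$), so the trace is unchanged. The move $(\dots,a,0,b,\dots)\mapsto(\dots,a+b,\dots)$ uses $S_qR_q^{0}S_q=S_q^{2}=-q^{-1}I$ and multiplies $M_q$, hence the trace, by $-q^{-1}$. The move $(\dots,a,1,b,\dots)\mapsto(\dots,a-1,b-1,\dots)$ uses the rewriting $S_qR_qS_q=qR_q^{-1}S_qR_q^{-1}$ of $(R_qS_q)^{3}=-I$ and multiplies the trace by $q$. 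It is classical that iterating these moves (the passage between regular and negative continued fractions, e.g.\ \eqref{eq:atocconversion}, performed cyclically; equivalently Hirzebruch--Jung reduction / the conjugacy classification of $\mathrm{PSL}(2,\ZZ)$) brings any integer cyclic sequence to one with all entries $\ge2$, apart from the finitely many sequences representing torsion elements. A workable order in which to run it: first use the $0$-move to erase all zero entries (each step lowers the length $k$), then use the $1$-move to erase all entries equal to $1$ (again $k$ drops), and whenever a negative entry survives, apply the inverse $1$-move to an adjacent entry to manufacture a $0$ or a $1$ and repeat. The scalar factors collected along the way multiply to a single $q^{\pm N}$ (the sign being immaterial, as in $\mathrm{PSL}_q$ the trace is only defined up to sign), and the torsion conjugacy classes — the identity and the order‑$2$ and order‑$3$ elements — are dispatched by direct inspection.

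The step I expect to be the real work is the termination of that cyclic reduction: because only $\mathrm{tr}(M)$, not $M$ itself, is to be preserved, one runs the moves on necklaces rather than on a single matrix word, and one must verify that the procedure cannot loop forever and genuinely lands on an all‑$\ge2$ necklace; setting up the right monovariant to handle the remaining negative entries, isolating the torsion exceptions, and correctly accumulating the power of $q$ are the only delicate points — the rest is bookkeeping.
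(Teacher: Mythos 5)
This lemma is imported verbatim from \cite{leclere}; the paper supplies no proof of it, so there is no internal argument to compare yours against and I am assessing it on its own terms. Your skeleton is the standard (and surely intended) one, and every computational ingredient checks out: $S_q^2=-q^{-1}I$, $(R_qS_q)^3=-I$, the rewriting $S_qR_qS_q=qR_q^{-1}S_qR_q^{-1}$, the trace factors $-q^{-1}$ and $q$ for the $0$- and $1$-moves, and the fact that cyclic rotation is conjugation by the prefix $R_q^{c_1}S_q$. These are exactly the $q$-deformed continuant moves behind Equation~\ref{eq:atocconversion}. (One small wrinkle in the first half: since the paper only quotients by $\pm I$ and $S_q^2=-q^{-1}I$ is not $\pm I$, your substitutions $S_q^{-1}\mapsto S_q$ and $S_qS_q\mapsto I$ only give $M=\pm q^{N}M_q(c_1,\ldots,c_k)$, not $M=M_q(c_1,\ldots,c_k)$ literally; this is really a looseness in the statement, but your write-up should flag it.)

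There are, however, two genuine gaps. First, the termination of the cyclic reduction is not merely "the real work" left to the reader --- the schedule you propose does not visibly terminate. Your prescription for a surviving negative entry ("apply the inverse $1$-move to manufacture a $0$ or a $1$ and repeat") literally undoes itself if the manufactured $1$ is then erased by the $1$-move, and no single obvious quantity is monotone: the $1$-move decreases $k$ but can create fresh entries $\le 1$ by decrementing neighbours, while clearing an entry $-m$ (by $m{+}1$ inverse $1$-moves followed by a $0$-move, turning $(\ldots,a,-m,b,\ldots)$ into $(\ldots,a{+}1,2^{m},b{+}1,\ldots)$) increases $k$. You need an explicit schedule with a genuine monovariant --- e.g.\ first eliminate all non-positive entries with the macro-move just described, under which the number of non-positive entries strictly decreases, and then argue separately that the phase removing $1$'s cannot re-enter the first phase indefinitely. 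Second, the elliptic classes cannot be "dispatched by direct inspection" in the sense of verifying the statement for them: $\mathrm{tr}(S_q)=0$, $\mathrm{tr}(R_qS_q)=1$ and $\mathrm{tr}((R_qS_q)^2)=-1$, whereas every $\mathrm{tr}(M_q(c_1,\ldots,c_k))$ with all $c_i\ge 2$ is, by the positivity results quoted in Section 2, a polynomial with at least two positive terms, and no $\pm q^{\pm N}$ multiple of such a polynomial equals $0$ or $\pm1$. So the torsion classes are genuine exceptions that must be excluded from the second assertion, not instances of it; your proof asserts the opposite and should instead record this caveat.
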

	
	This lemma implies that, especially when considering Conjecture~\ref{conj:leclere}, it makes sense to limit our attention to the case $r/t\geq 1$, which is exactly when all $c_i$ are $\geq 2$.
	
	 The traces of matrices in $\mathrm{PSL}_q(2,\ZZ)$ also have the following properties in common with circular rank polynomials (\cite{leclere}, Lemmas 3.8, 3.9, 3.10):
	\begin{itemize}[noitemsep]
		\item $\operatorname{tr}(M_q(c_1,c_2,\ldots,c_k))$ is a symmetric polynomial.
		\item It is invariant under cyclic shifts of $\llbracket c_1,c_2,\ldots,c_k\rrbracket$ so that:  $$\operatorname{tr}(M_q(c_1,c_2,\ldots,c_k))=\operatorname{tr}(M_q(c_2,c_3,\ldots,c_k,c_1)).$$
		\item When all $c_i$ are at least $2$, the polynomial $\operatorname{tr}(M_q(c_1,c_2,\ldots,c_k))$ has positive integer coefficients.
	\end{itemize}

	 \begin{example}\label{ex:32/9matrices} We have seen in Example~\ref{ex:32/9} that $32/9$ has the continued fraction expressions $[3,1,1,4]$ and $\llbracket 4,3,2,2,2\rrbracket$. The matrices corresponding to these expressions are:
	 \begin{align*}
	     	M_q(4,3,2,2,2)&%=R_q^{4}S_q R_q^{3}S_q(R_q^2S_q)^3
	     	=\begin{bmatrix}
	     	    1\miniplus 3q\miniplus 5q^2\miniplus 6q^3\miniplus 6q^4\miniplus 5q^5\miniplus 3q^6\miniplus 2q^7\miniplus q^8& -q(1\miniplus 3q\miniplus 5q^2\miniplus 5q^3\miniplus 5q^4\miniplus 3q^5\miniplus q^6)\\
1\miniplus 2q\miniplus 2q^2\miniplus 2q^3\miniplus q^4\miniplus q^5& -q(1\miniplus 2q\miniplus 2q^2\miniplus q^3\miniplus q^4)
	     	\end{bmatrix} \\ &=\begin{bmatrix}
\rank((2,1,1,3);q) & -q \rank((2,1,1,2);q) \\ \rank((1,3);q) & - q\rank((1,2);q).\\
	     	\end{bmatrix}, 
	     		\\M^+_q(3,1,1,4)%=R_q^{4}S_q R_q^2 (R_qS_qR_q)^{4}
	     		&=\begin{bmatrix}
	     		q(1\miniplus 3q\miniplus 5q^2\miniplus 6q^3\miniplus 6q^4\miniplus 5q^5\miniplus 3q^6\miniplus 2q^7\miniplus q^8)& 1\miniplus 2q\miniplus 2q^2\miniplus q^3\miniplus q^4\\q(1\miniplus 2q\miniplus 2q^2\miniplus 2q^3\miniplus q^4\miniplus q^5)&1\miniplus q
	     		\end{bmatrix}
	     		\\ &=\begin{bmatrix}
q \rank((2,1,1,3);q) & \rank((2,1);q) \\ q \rank((1,3);q) & \rank((1);q)\\
	     	\end{bmatrix}.  
	 \end{align*}
The traces of the two matrices are given by circular rank polynomials:
\begin{align*}
    \operatorname{tr}(M_q(4,3,2,2,2))&=\rank((2,1,1,3);q)-q\rank((1,2);q)=\overline{\rank}((2,1,1,4);q)),\\
        \operatorname{tr}(M^+_q(3,1,1,4))&=q\rank((2,1,1,3);q)-1-q=\overline{\rank}((3,1,1,4);q)).\\
\end{align*}.
	 \end{example}

In Section~\ref{sec:fence}, we will show that the traces of the matrices $M_q$ and $M^+_q$ are always given by circular rank polynomials.
	
	\section{Oriented Posets} \label{sec:oriented posets}
	
	We define an \definition{oriented poset} to be a triple $\overrightarrow{P}:=(P,x_L,x_R)$ where $P$ is a poset with two specialized nodes $x_L$, $x_R$. One can think of them as the left end node and the right end node respectively. We will connect oriented posets via connecting the left end of one to the right end of the other by adding a relation. More precisely speaking, we add oriented posets via the operation:
\begin{align*}
    (P,x_L,x_R)\diplus(Q,y_L,y_R):=(S,x_L,y_R)
\end{align*} where $S$ is the unique poset given by connecting the posets $P$ and $Q$ by setting $x_R \succeq y_L$. We also use the notation $\close(\overrightarrow{P})$ to denote the structure obtained by adding the relation $x_R \succeq x_L$. This is generally a poset, though in the case  $x_R \preceq x_L$ in the original poset we get some degeneracy.

	There are two natural ways to see the length $n$ chain poset as an oriented poset: The increasing chain $\overrightarrow{U_n}$ with $x_L$ taken to be the minimal element and $x_R$ the maximum, and the decreasing chain $\overrightarrow{D_n}$ with the opposite orientation. See Figure \ref{fig:UD} below for an example.

	\begin{figure}[ht]
		\centering
		\begin{tikzpicture}[scale=.25]
			
			\fill(0,0) circle(.2);
			\fill(2,2) circle(.2);
			\fill(4,4) circle(.2);
			\fill(6,6) circle(.2);
			\fill(11,6) circle(.2);
			\fill(13,4) circle(.2);
			\fill(15,2) circle(.2);
			\fill(17,0) circle(.2);
			\fill(25,0) circle(.2);
			\fill(27,2) circle(.2);
			\fill(29,4) circle(.2);
			\fill(31,6) circle(.2);
			\fill(33,4.5) circle(.2);
			\fill(35,3) circle(.2);
			\fill(37,1.5) circle(.2);
			\fill(39,0) circle(.2);
			\node at (-0.3, 0.8) {$x_L$};
			\node at (6, 6.7) {$x_R$};
			\node at (25-0.3, 0.8) {$x_L$};
			\node at (39.3, .8) {$y_R$};
			\node  at (11, 6.7) {$y_L$};
			\node  at (17.3, 0.8) {$y_R$};
			\node  at (3, -1) {$\overrightarrow{U_3}$};
			\node  at (14, -1) {$\overrightarrow{D_3}$};
			\draw (0,0)--(6,6) (11,6)--(17,0) (39,0)--(33,4.5) (31,6)--(25,0);
			\draw[ultra thick,red,dotted](33,4.5)--(31,6);
			\node  at (32, -1) {$\overrightarrow{U_3}\diplus\overrightarrow{D_3}$};
		\end{tikzpicture}
		\caption{ $\protect \overrightarrow{U_3}\diplus\protect\overrightarrow{D_3}$ gives us the fence poset for $(3,4)$.}\label{fig:UD}		
	\end{figure}

	The motivation behind oriented posets is to build up larger families of posets modularly from smaller pieces, using the addition operation connecting the right end of one poset to the left end of another. In particular, we will use them to build regular and circular fence posets and calculate their rank polynomials in Section \ref{sec:fence}. Though this building method is not sufficient to capture the complicated nature of a general poset, it works well when the underlying graph naturally comes with edge-cuts that makes the structure disconnected, as in the cases of the Dynkin diagrams for Lie groups. With this paradigm, we can use $2\times2$ matrices to calculate the rank polynomials of the larger posets we obtain, where addition and connecting the ends operations correspond to matrix multiplication and taking the trace respectively.
	
	Consider the subsets of the distributive lattice $J(P)$  determined by the inclusion or exclusion of vertices $x_L$ and $x_R$. Our notation is that the subscript on the left describes the exclusion ($0$) or inclusion ($1$) of $x_L$ and the subscript on the right describes the inclusion or exclusion of $x_R$: 
	\begin{align*}
		\rank(\overrightarrow{P};q):=& \sum_{I\in J(P)}  q^{|I|}  \qquad \qquad  &_0\rank(\overrightarrow{P};q):= \sum_{\substack{I\in J(P)\\ x_L\notin I} } q^{|I|} \\
		\rank_0(\overrightarrow{P};q):=& \sum_{\substack{I\in J(P)\\ x_R\notin I} } q^{|I|}  \qquad &
		_0\rank_0(\overrightarrow{P};q):= \sum_{\substack{I\in J(P)\\ x_L, x_R\notin I}}  q^{|I|} 
	\end{align*}
	The superscripts can also be taken to be $1$, to specify $x_L$ or $x_R$ is going to be included.

	\begin{defn} We define the \definition{rank matrix} of an oriented poset $(P,x_L,x_R)$ as follows:
		$$\displaystyle \rmm_q(\overrightarrow{P}):=\begin{bmatrix} \rank(\overrightarrow{P};q) & -\rank_1(\overrightarrow{P};q)\\ _0\rank(\overrightarrow{P};q) & -\,_0\rank_1(\overrightarrow{P};q)
		\end{bmatrix}
		$$
	\end{defn}
	
	The increasing and decreasing length $n$ chains have the following rank matrices:
	\begin{eqnarray*}
		&\rmm_q(\overrightarrow{U_n})&=\begin{bmatrix} [n+2]_q &-q^{n+1}\\1& 0
		\end{bmatrix}  =R_q^{n+2}S_q.\\
		&\rmm_q(\overrightarrow{D_n})&=
		\begin{bmatrix} [n+2]_q & -q[n+1]_q \\ [n+1]_q & -q[n]_q
		\end{bmatrix}=(R_q^2S_q)^{n+1}.
	\end{eqnarray*}
	Note that in both cases, the resulting matrix lies in $\mathrm{PSL_q}(2,\ZZ)$.
	For simplicity, we will use the notation $D_0$ for the  matrix corresponding to the case $\overrightarrow{\bullet}=\overrightarrow{U_0}=\overrightarrow{U_0}$ of having only one vertex. This can be seen as the generator of a down-step.
	\begin{align*}
		D_0:=&\rmm_q(\overrightarrow{\bullet})=\begin{bmatrix} [2]_q & -q\\1 & 0
		\end{bmatrix}= R_q^{2}S_q.
	\end{align*}

	As mentioned above, the motivation behind rank matrices is to be able to calculate the rank polynomials corresponding to $\close(\overrightarrow{P})$ and $\overrightarrow{P}\diplus\overrightarrow{Q}$ using simple matrix operations.
	
	\begin{prop} Let $\overrightarrow{P}=(P,x_L,x_R)$ and  $\overrightarrow{Q}=(Q,y_L,y_R)$ be two oriented posets. Then we have:
		\begin{eqnarray}
			&\rmm_q(\overrightarrow{P}\diplus\overrightarrow{Q})= \rmm_q(\overrightarrow{P})\cdot \rmm_q(\overrightarrow{!})\\
			&\rank(\close(\overrightarrow{P}))=\mathrm{tr}(\rmm_q(\overrightarrow{P}))
		\end{eqnarray}
	\end{prop}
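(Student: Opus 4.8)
The plan is to prove both identities by a single combinatorial mechanism: rewrite each entry of the rank matrices as a generating function over ideals, and then verify the asserted equalities by splitting the relevant distributive lattices according to whether the distinguished vertices lie in the ideal.

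\textbf{The product formula.} First I would describe the ideals of $S:=\overrightarrow{P}\diplus\overrightarrow{Q}$. Writing $I=I_P\sqcup I_Q$ with $I_P=I\cap P$ and $I_Q=I\cap Q$, I claim $I$ is an ideal of $S$ exactly when $I_P\in J(P)$, $I_Q\in J(Q)$, and the single compatibility condition $x_R\in I_P\Rightarrow y_L\in I_Q$ holds. The forward direction is immediate from $y_L\prec x_R$ in $S$; for the converse one checks that $I$ stays down-closed after forming the transitive closure of the glued relation, and since every new comparability factors through the chain $a\preceq y_L\prec x_R\preceq b$, the assumption $b\in I$ forces $x_R\in I$, hence $y_L\in I$, hence $a\in I$. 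Summing $q^{|I|}=q^{|I_P|}q^{|I_Q|}$ over this set and splitting on whether $x_R\in I_P$ yields
\[
\rank(\overrightarrow{S};q)=\rank_0(\overrightarrow{P};q)\,\rank(\overrightarrow{Q};q)+\rank_1(\overrightarrow{P};q)\cdot{}^{1}\rank(\overrightarrow{Q};q),
\]
where ${}^{1}\rank(\overrightarrow{Q};q)$ sums over ideals of $Q$ containing $y_L$. Substituting $\rank_0(\overrightarrow{P};q)=\rank(\overrightarrow{P};q)-\rank_1(\overrightarrow{P};q)$ and ${}^{1}\rank(\overrightarrow{Q};q)=\rank(\overrightarrow{Q};q)-{}_0\rank(\overrightarrow{Q};q)$ makes the right side collapse, term by term, to $\rank(\overrightarrow{P};q)\rank(\overrightarrow{Q};q)-\rank_1(\overrightarrow{P};q)\,{}_0\rank(\overrightarrow{Q};q)$, which is precisely the $(1,1)$ entry of $\rmm_q(\overrightarrow{P})\cdot\rmm_q(\overrightarrow{Q})$ --- the $-\rank_1$ sign convention in the definition of $\rmm_q$ being exactly what cancels the cross terms. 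The remaining three entries follow by the same split: imposing ``$x_L\notin I$'' on $I_P$ throughout gives the second row, and replacing the free sum over $I_Q$ by the sum over ideals of $Q$ containing $y_R$ gives the second column. I would present the $(1,1)$ case in full and note that the others are verbatim.

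\textbf{The trace formula.} Adding the relation $x_R\succeq x_L$, the same transitive-closure argument shows that the ideals of $\close(\overrightarrow{P})$ are exactly the ideals $I\in J(P)$ satisfying $x_R\in I\Rightarrow x_L\in I$; in the degenerate case $x_R\preceq x_L$ this condition is vacuous and $\close$ leaves the poset unchanged, so the statement persists. Hence
\[
\rank(\close(\overrightarrow{P}))=\!\!\sum_{\substack{I\in J(P)\\ x_R\in I\ \Rightarrow\ x_L\in I}}\!\! q^{|I|}=\rank(\overrightarrow{P};q)-{}_0\rank_1(\overrightarrow{P};q),
\]
the second equality being inclusion--exclusion: the ideals violating the implication are precisely those with $x_L\notin I$ and $x_R\in I$, which are counted by ${}_0\rank_1$. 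The right side is $\mathrm{tr}\,\rmm_q(\overrightarrow{P})$ by definition.

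\textbf{Where the work lies.} There is no genuine obstacle; the argument is bookkeeping. The one content-bearing step is the claim that down-closedness of $I_P$ and $I_Q$ together with the single compatibility condition is equivalent to down-closedness of $I$ in the transitive closure of $S$ (and its analogue for $\close$), together with dispatching the harmless degeneracy when $x_R\preceq x_L$. After that, the only thing requiring care is keeping the four statistics straight under the convention that a left subscript (or superscript) refers to $x_L$, a right one to $x_R$, with $0$ meaning excluded and $1$ meaning included.
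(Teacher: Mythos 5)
Your proposal is correct and follows essentially the same route as the paper: decompose each ideal of $\overrightarrow{P}\diplus\overrightarrow{Q}$ into a pair $(I_P,I_Q)$ subject to the single compatibility condition $x_R\in I_P\Rightarrow y_L\in I_Q$, match the resulting generating-function identity entry by entry with the matrix product, and obtain the trace formula by inclusion--exclusion on the ideals violating $x_R\in I\Rightarrow x_L\in I$. Your intermediate split on whether $x_R\in I_P$ (and your explicit check that the compatibility condition survives taking the transitive closure) is just a slightly more detailed presentation of the same computation the paper writes in one line.
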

	\begin{proof} The way we form $\overrightarrow{P}=(P,x_L,x_R)\diplus\overrightarrow{Q}=(Q,y_L,y_R)$  is to add a new relation $x_R \succeq y_L$. So any ideal of $\overrightarrow{P}\diplus\overrightarrow{Q}$ can be seen as a pair $(I_1,I_2)$ where $I_1$ is an ideal of $P$, $I_2$ is an ideal of $Q$ and we have $x_R\in I_1\Rightarrow y_L\in I_2$.
		On the rank polynomial side, this means:
		\begin{eqnarray*}
			&\rank(\overrightarrow{P}\diplus\overrightarrow{Q};q)= \rank(\overrightarrow{P};q)\, \rank(\overrightarrow{Q};q)-\rank_1(\overrightarrow{P};q)\,_0\rank(\overrightarrow{Q};q).
		\end{eqnarray*}
		Note that this is exactly the upper left entry of $\rmm_q(\overrightarrow{P})\cdot \rmm_q(\overrightarrow{q})$. Other entries are calculated by adding the extra conditions $x_L$ is in and $y_R$ is out to the above equation, and similarly match the matrix multiplication.
		
		For the second equation, note that the trace of $\rmm_q(\overrightarrow{P})$ is given by $\rank(\overrightarrow{P};q)-\,_0\rank_1(\overrightarrow{P};q)$, which counts the ideals with $x_L\in I \Rightarrow x_R\in I$. This is the same thing as adding the relation $x_L \succeq x_R$.
	\end{proof}
	
	\begin{rmk} The down chain $\overrightarrow{D_n}$ can be built by adding up $n+1$ copies of $\overrightarrow{\bullet}$. Accordingly we have:
		\begin{align}
			\rmm_q(\overrightarrow{D_n})=&(D_0)^{n+1} \label{eq:DtoT0}
		\end{align}
	\end{rmk}
	\begin{example} For the poset $(3,4)$ given in Figure \ref{fig:UD}, the rank polynomial is given by the top left entry of
		\begin{align*}
			\displaystyle \rmm_q(\overrightarrow{U_3})\,D_0^4=&
			\begin{bmatrix} ([5]_q)^2-q^4[4]_q & q^5 [3]_q-q [5]_q [4]_q \\ [5]_q & -q[4]_q\,
			\end{bmatrix}
		\end{align*}
		\begin{align*}
			\rank((3,4);q)=&[5]_q^2-q^4[4]_q=1+2q+3q^2+4q^3+4q^4+3q^5+2q^6+q^7+q^8.  
		\end{align*}
	\end{example}
	
	\section{Dual Addition}\label{sec:dual}
	
	In this section, we look at a dual definition of addition where we add the relation $x_R \preceq y_L$ instead of $x_R \succeq y_L$ to connect two posets $(P,x_L,x_R)$ and $(Q,y_L,y_R)$. We denote the directed poset $(S,x_L,y_R)$ we obtain this way by $(P,x_L,x_R)\revdiplus (Q,y_L,y_R)$.We also use the notation $\revclose(\overrightarrow{P})$ to denote the structure obtained by adding the relation $x_R \preceq x_L$ to $(P,x_L,x_R)$. 

	\begin{figure}[ht]
		\centering
		\begin{tikzpicture}[scale=.25]
			\draw[ultra thick,red,dotted](31,4.5)--(33,6);
			\fill(25,0) circle(.2);
			\fill(27,1.5) circle(.2);
			\fill(29,3) circle(.2);
			\fill(31,4.5) circle(.2);
			\fill(33,6) circle(.2);
			\fill(35,4) circle(.2);
			\fill(37,2) circle(.2);
			\fill(39,0) circle(.2);
			\node at (25-0.3, 0.8) {$x_L$};
			\node at (39.3, .8) {$x_R$};
			\draw (25,0)--(31,4.5) (33,6)--(39,0);
		\end{tikzpicture}$\qquad \qquad$\begin{tikzpicture}[scale=.25]
			\draw[ultra thick,red,dotted,->](23,1)--(25,2);
			\draw[ultra thick,red,dotted,->](39,0)--(41,1);
			\fill(25,2) circle(.2);
			\fill(27,3) circle(.2);
			\fill(29,4) circle(.2);
			\fill(31,5) circle(.2);
			\fill(33,6) circle(.2);
			\fill(35,4) circle(.2);
			\fill(37,2) circle(.2);
			\fill(39,0) circle(.2);
			\draw (25,2)--(33,6)--(39,0);
		\end{tikzpicture}
		\caption{The posets $\protect\overrightarrow{U_3} \revdiplus \protect\overrightarrow{D_3}$ (left) and $\revclose (\protect\overrightarrow{U_3} \revdiplus \protect\overrightarrow{D_3} )$  (right).}
		\label{fig:dualadding}
	\end{figure}

	\begin{defn} The \definition{dual rank matrix} of a oriented poset $(P,x_L,x_R)$ is defined as follows:
		\begin{align}
			\displaystyle \drm_q(\overrightarrow{P}):=\begin{bmatrix} \rank_1(\overrightarrow{P};q) & \rank_0(\overrightarrow{P};q)\\ _0\rank_1(\overrightarrow{P};q) & _0\rank_0(\overrightarrow{P};q)
			\end{bmatrix}
			\label{eq:Dualdef}   
		\end{align}
	\end{defn}

	\begin{prop} \label{prop:dualadding}Let $\overrightarrow{P}=(P,x_L,x_R)$ and  $\overrightarrow{Q}=(Q,y_L,y_R)$ be two oriented posets. Then we have:
		\begin{eqnarray}
			&\drm_q(\overrightarrow{P}\revdiplus\overrightarrow{Q})= \drm_q(\overrightarrow{P})\cdot \drm_q(\overrightarrow{Q}),\\
			&R(\revclose(\overrightarrow{P}))=\mathrm{trace}(\drm_q(\overrightarrow{P})).
		\end{eqnarray}
	\end{prop}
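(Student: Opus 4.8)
The plan is to mirror the proof of the corresponding statement for ordinary addition and the $\close$ operation, but dualize every condition on ideals. First I would set up the combinatorial dictionary: when we form $\overrightarrow{P}\revdiplus\overrightarrow{Q}$ by adding the relation $x_R\preceq y_L$, an ideal of the resulting poset $S$ is precisely a pair $(I_1,I_2)$ with $I_1\in J(P)$, $I_2\in J(Q)$, subject to the compatibility condition forced by the new cover: since $x_R\preceq y_L$, whenever $y_L\in I_2$ we must have $x_R\in I_1$. Equivalently, the forbidden configuration is $y_L\in I_2$ and $x_R\notin I_1$. This is the dual of the condition $x_R\in I_1\Rightarrow y_L\in I_2$ that appeared for $\diplus$, which is exactly why the bookkeeping shifts from the $\rmm_q$ matrix to the $\drm_q$ matrix.

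Next I would verify the four entries of $\drm_q(\overrightarrow{P}\revdiplus\overrightarrow{Q})$ against the product $\drm_q(\overrightarrow{P})\cdot\drm_q(\overrightarrow{Q})$ one at a time. Using the decomposition above, the generating function for ideals of $S$ with a prescribed status of $x_L$ and of $y_R$ is
\begin{align*}
\sum_{(I_1,I_2)} q^{|I_1|+|I_2|} = \left(\sum_{I_1} q^{|I_1|}\right)\!\left(\sum_{I_2} q^{|I_2|}\right) - \left(\sum_{\substack{I_1:\,x_R\notin I_1}} q^{|I_1|}\right)\!\left(\sum_{\substack{I_2:\,y_L\in I_2}} q^{|I_2|}\right),
\end{align*}
where each sum also carries whatever constraint on $x_L$ (inherited from $P$) or $y_R$ (inherited from $Q$) is being tracked. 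The subtracted term removes exactly the forbidden configurations. Reading off the first product term as $(\text{status of }x_L\text{ in }P)\times(\text{status of }y_R\text{ in }Q)$ and the second as $(\text{status of }x_L,\ x_R\notin)\times(y_L\in,\ \text{status of }y_R)$, one checks that this is precisely the matrix product of the two $2\times 2$ arrays in Definition~\ref{eq:Dualdef}: the row index encodes whether $x_L$ is forced in, the column index whether $y_R$ is excluded, and the internal summation index (inclusion vs.\ exclusion of the glued pair $x_R,y_L$) is exactly the contracted matrix index. For the trace identity, note that $\mathrm{trace}(\drm_q(\overrightarrow{P}))=\rank_1(\overrightarrow{P};q)+\,_0\rank_0(\overrightarrow{P};q)$, which counts ideals $I$ of $P$ with $x_L\in I, x_R\in I$ together with those with $x_L\notin I, x_R\notin I$; these are exactly the ideals satisfying $x_R\in I\Leftrightarrow x_L\in I$, equivalently $x_R\notin I$ or $x_L\in I$, which is the constraint imposed by adjoining $x_R\preceq x_L$, i.e.\ the ideals of $\revclose(\overrightarrow{P})$.

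The main obstacle, and the one point that deserves genuine care rather than bookkeeping, is the degeneracy warning already flagged in Section~\ref{sec:oriented posets}: if $x_R\succeq x_L$ (or $x_R\succeq y_L$ across the two posets) already holds in the original poset, then $\revclose$ (resp.\ $\revdiplus$) does not produce a genuinely new poset and may collapse elements, so one must confirm that the generating-function identity still holds in that boundary case. I would handle this by observing that even when the new relation is redundant, the set of pairs $(I_1,I_2)$ satisfying the stated compatibility condition is unchanged — a redundant relation adds no new forbidden configurations because the forbidden pair was already impossible — so both sides of the identity are unaffected; the only subtlety is purely cosmetic (the "poset" $S$ may be a quotient), and the ideal count is still correct. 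After dispatching this, the remaining verification is the routine $2\times2$ entry-matching indicated above, carried out exactly as in the proof of the previous proposition.
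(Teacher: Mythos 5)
Your overall strategy is the intended one --- the paper itself omits this proof with the remark that it is ``similar to the regular case,'' and your dualized gluing condition (an ideal of $\overrightarrow{P}\revdiplus\overrightarrow{Q}$ is a pair $(I_1,I_2)$ whose only forbidden configuration is $y_L\in I_2$ with $x_R\notin I_1$) is exactly right. Two small points on the product step: the dual matrix product is matched most directly not by inclusion--exclusion but by the disjoint split on the status of $x_R$ (either $x_R\in I_1$ and $y_L$ is unconstrained, or $x_R\notin I_1$ and then $y_L\notin I_2$), which reproduces $\sum_k \drm_q(\overrightarrow{P})_{ik}\,\drm_q(\overrightarrow{Q})_{kj}$ entry by entry; your subtraction formula is equivalent only after invoking $\rank=\rank_0+\rank_1$. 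Also the row index of $\drm_q$ records whether $x_L$ is forced \emph{out}, not in.

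The trace step, however, is wrong as written. The diagonal of $\drm_q(\overrightarrow{P})$ is $\rank_1(\overrightarrow{P};q)+{}_0\rank_0(\overrightarrow{P};q)$, and $\rank_1$ carries \emph{no} condition on $x_L$: it counts all ideals with $x_R\in I$, not only those containing both $x_L$ and $x_R$. Hence the trace counts ideals satisfying ``$x_R\in I$, or else $x_L\notin I$ and $x_R\notin I$,'' i.e.\ $x_L\in I\Rightarrow x_R\in I$, which is precisely the down-closure condition imposed by the new relation $x_R\preceq x_L$. Your version instead asserts the trace counts ideals with $x_L\in I\Leftrightarrow x_R\in I$, then calls this ``equivalently $x_R\notin I$ or $x_L\in I$'' --- but the biconditional is not equivalent to that disjunction, and that disjunction (i.e.\ $x_R\in I\Rightarrow x_L\in I$) is the constraint for the relation oriented the \emph{other} way. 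The discrepancy is real: for $\overrightarrow{D_1}$ one has $\mathrm{tr}(\drm_q(\overrightarrow{D_1}))=1+q+q^2=\rank(\revclose(\overrightarrow{D_1});q)$, whereas the ``both in or both out'' count is $1+q^2$. The proposition is of course still true and your argument is repaired by the one-line computation above, but as stated this step would not survive scrutiny.
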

	
The proof is similar to the regular case and will be omitted.	
	
	For the chains $\overrightarrow{U_n}$ and $\overrightarrow{D_n}$, we get the following expressions for dual rank matrices:
	\begin{align}
		\drm_q(\overrightarrow{U_n})&=\begin{bmatrix}
			q^{n+1}&[n+1]_q\\
			0&1
		\end{bmatrix}=R_q^{n+1}\in \mathrm{PSL_q}(2,\ZZ),\\
		\drm_q(\overrightarrow{D_n})&=\begin{bmatrix}
			q[n+1]_q&1\\
			q[n]_q&1
		\end{bmatrix}=R_q(R_q S_q R_q)^n\in \mathrm{PSL_q}(2,\ZZ).
	\end{align}
	We use the notation $U_0$ for the dual rank matrix  of $\overrightarrow{\bullet}=\overrightarrow{U_0}=\overrightarrow{D_0}$:
	\begin{align*}
		U_0&=\begin{bmatrix}
			q&1\\
			0&1
		\end{bmatrix}=R_q.
	\end{align*}
	\begin{rmk} The increasing chain $\overrightarrow{U_n}$ can be built by adding up $n+1$ copies of $\overrightarrow{\bullet}$ in this dual way. Accordingly we have:
		\begin{align}
			\drm_q(\overrightarrow{U_n})=&(U_0)^{n+1}. \label{eq:UtoT0}
		\end{align}
	\end{rmk}
	
	\begin{example} As seen in Figure \ref{fig:dualadding} above,  $\overrightarrow{U_3}\revdiplus\overrightarrow{D_3}$ gives us the fence poset $F(4,3)$ and  $\revclose(\overrightarrow{U_3}\revdiplus\overrightarrow{D_3})$ gives us the circular fence poset $\overline{F}(5,3)$. The corresponding rank polynomials can be calculated as follows:
		\begin{align*}
			\drm_q(\overrightarrow{U_3}\revdiplus\overrightarrow{D_3})&=\begin{bmatrix}
				q^{4}&[4]_q\\
				0&1
			\end{bmatrix}\cdot\begin{bmatrix}
				q[4]_q&1\\
				q[3]_q&1
			\end{bmatrix}= \begin{bmatrix}
				(q+q^2+q^3+q^5)[4]_q&[5]_q\\
				q[3]_q&1
			\end{bmatrix}\\
			\rank((4,3);q)&=\drm_q(\overrightarrow{U_3}\revdiplus\overrightarrow{D_3})[1,1]+\drm_q(\overrightarrow{U_3}\revdiplus\overrightarrow{D_3})[1,2]
			\\&=(q+q^2+q^3+q^5)[4]_q+[5]_q=[5]_q[4]_q+q^8.\\
			\overline{\rank}((5,3);q)&=\mathrm{tr}(\overrightarrow{U_3}\revdiplus\overrightarrow{D_3}))=1+q+2q^2+3q^3+3q^4+3q^5+2q^6+q^7+q^8.
		\end{align*}
		
	\end{example}
	We will use the expression \emph{rank matrices} of $\overrightarrow{P}$ when we need to refer to the rank matrix $\rmm_q(\overrightarrow{P})$ and the dual rank matrix $\drm_q(\overrightarrow{P})$ together.	
	
	\begin{lemma} The rank matrices of an oriented poset $\overrightarrow{P}$ are related by:\label{lem:dualregularconnection}
		
		\begin{align*}
			\drm_q(\overrightarrow{P})&=\rmm_q(\overrightarrow{P})\begin{bmatrix}
				0&1\\-1&1
			\end{bmatrix}\qquad \qquad   \rmm_q(\overrightarrow{P})=\drm_q(\overrightarrow{P})\begin{bmatrix}
				1&-1\\ 1&0
			\end{bmatrix}.
		\end{align*}
		As a consequence, for any two oriented posets $\overrightarrow{P}=(P,x_L,x_R)$ and  $\overrightarrow{Q}=(Q,y_L,y_R)$ we have:
		\begin{align}
			\drm_q(\overrightarrow{P})\cdot \rmm_q(\overrightarrow{Q})&=\rmm_q(\overrightarrow{P}\revdiplus\overrightarrow{Q}), \\
			\rmm_q(\overrightarrow{P})\cdot \drm_q(\overrightarrow{Q})&=\drm_q(\overrightarrow{P}\diplus\overrightarrow{Q}).
		\end{align}
	\end{lemma}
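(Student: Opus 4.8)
The plan is to establish the two matrix identities directly from the definitions, and then obtain the two addition formulas as purely formal consequences, using the multiplicativity statements already proved (the Proposition on $\diplus$ and Proposition~\ref{prop:dualadding} on $\revdiplus$).

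First I would record the one combinatorial input needed: for any oriented poset $\overrightarrow{P}$, partitioning the ideals of $P$ according to whether $x_R\in I$ gives $\rank(\overrightarrow{P};q)=\rank_0(\overrightarrow{P};q)+\rank_1(\overrightarrow{P};q)$, and the same partition restricted to ideals with $x_L\notin I$ gives ${}_0\rank(\overrightarrow{P};q)={}_0\rank_0(\overrightarrow{P};q)+{}_0\rank_1(\overrightarrow{P};q)$. Granting this, I would simply multiply out
$$\rmm_q(\overrightarrow{P})\begin{bmatrix}0&1\\-1&1\end{bmatrix}=\begin{bmatrix}\rank(\overrightarrow{P};q)&-\rank_1(\overrightarrow{P};q)\\ {}_0\rank(\overrightarrow{P};q)&-{}_0\rank_1(\overrightarrow{P};q)\end{bmatrix}\begin{bmatrix}0&1\\-1&1\end{bmatrix}=\begin{bmatrix}\rank_1(\overrightarrow{P};q)&\rank_0(\overrightarrow{P};q)\\ {}_0\rank_1(\overrightarrow{P};q)&{}_0\rank_0(\overrightarrow{P};q)\end{bmatrix},$$
where the last step uses $\rank-\rank_1=\rank_0$ and ${}_0\rank-{}_0\rank_1={}_0\rank_0$; by definition the right-hand matrix is $\drm_q(\overrightarrow{P})$. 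Since $\begin{bmatrix}0&1\\-1&1\end{bmatrix}$ has determinant $1$ and inverse $\begin{bmatrix}1&-1\\1&0\end{bmatrix}$, right-multiplying both sides by this inverse yields the second identity $\rmm_q(\overrightarrow{P})=\drm_q(\overrightarrow{P})\begin{bmatrix}1&-1\\1&0\end{bmatrix}$.

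For the two consequences I would chain the identities with multiplicativity. For the first, apply Proposition~\ref{prop:dualadding} and then the first identity (to $\overrightarrow{Q}$):
$$\drm_q(\overrightarrow{P}\revdiplus\overrightarrow{Q})=\drm_q(\overrightarrow{P})\,\drm_q(\overrightarrow{Q})=\drm_q(\overrightarrow{P})\,\rmm_q(\overrightarrow{Q})\begin{bmatrix}0&1\\-1&1\end{bmatrix};$$
comparing with the first identity applied to $\overrightarrow{P}\revdiplus\overrightarrow{Q}$, namely $\drm_q(\overrightarrow{P}\revdiplus\overrightarrow{Q})=\rmm_q(\overrightarrow{P}\revdiplus\overrightarrow{Q})\begin{bmatrix}0&1\\-1&1\end{bmatrix}$, and cancelling the invertible matrix on the right gives $\rmm_q(\overrightarrow{P}\revdiplus\overrightarrow{Q})=\drm_q(\overrightarrow{P})\,\rmm_q(\overrightarrow{Q})$. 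The second consequence is symmetric: start from $\rmm_q(\overrightarrow{P}\diplus\overrightarrow{Q})=\rmm_q(\overrightarrow{P})\,\rmm_q(\overrightarrow{Q})$, rewrite both $\rmm_q(\overrightarrow{Q})$ and $\rmm_q(\overrightarrow{P}\diplus\overrightarrow{Q})$ via $\rmm_q(\cdot)=\drm_q(\cdot)\begin{bmatrix}1&-1\\1&0\end{bmatrix}$, and cancel that matrix to obtain $\drm_q(\overrightarrow{P}\diplus\overrightarrow{Q})=\rmm_q(\overrightarrow{P})\,\drm_q(\overrightarrow{Q})$.

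I do not expect a real obstacle here; the argument is essentially bookkeeping. The two things to be careful about are using the correct conversion matrix on the correct side in the correct order, and making sure the splitting is performed with respect to $x_R$ (not $x_L$), since it is the $x_R$-data that occupies the sign/column structure of both $\rmm_q$ and $\drm_q$. As a cross-check, the two consequences can also be verified directly: an ideal of $\overrightarrow{P}\revdiplus\overrightarrow{Q}$ is a pair $(I_1,I_2)$ of ideals of $P$ and $Q$ with $y_L\in I_2\Rightarrow x_R\in I_1$, and splitting on whether $x_R\in I_1$ reproduces exactly the entrywise products appearing in $\drm_q(\overrightarrow{P})\,\rmm_q(\overrightarrow{Q})$, mirroring the proof of the $\diplus$ Proposition.
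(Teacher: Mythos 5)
Your proposal is correct and follows essentially the same route as the paper: the two conversion identities come from the splittings $\rank=\rank_0+\rank_1$ and ${}_0\rank={}_0\rank_0+{}_0\rank_1$ together with a direct matrix multiplication, and the two addition formulas then follow formally from the already-established multiplicativity of $\rmm_q$ and $\drm_q$ plus associativity (your "cancel the invertible conversion matrix" step is just this, made explicit). The only difference is that you spell out the bookkeeping that the paper leaves to the reader.
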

	
	\begin{proof}
		The first part follows from the fact that $\rank(q)=\rank_0(q)+\rank_1(q)$ and $_0\rank(q)=\,  _0\rank_0(q)+ \,_0\rank_1(q)$ and the second part follows from the associativity of matrix multiplication.
	\end{proof}

	%%%%%%%%%%%%%%%%%%%%%%%%%%%%%%%%%%%%%%%%%%%%%%%%%%%%%%%55
	\section{The Case of Fence Posets}\label{sec:fence}
	
	As we are able to build up any fence poset via adding up up and down steps, we can use the corresponding matrices to calculate rank polynomials. For a composition $\alpha$ of $n$, we can see the fence poset as an oriented poset by taking $x_0$ to be the left end, and $x_n$ to be the right end. One can build the fence poset of $\alpha$ from up and down steps. Looking at the corresponding multiplication of the matrices $U_0$ and $D_0$ gives us the rank matrices for $\overrightarrow{\alpha}$. Which matrix we obtain is determined by whether we pick $U_0$ or $D_0$ for the rightmost node.
	
	\begin{prop} Consider the oriented poset $\overrightarrow{\alpha}$ corresponding to $\alpha=(u_1, d_1, u_2,d_2,\ldots,u_s,d_s)$ where we allow $d_s=0$ for full generality. 
	Then $\overrightarrow{\alpha}$ has rank matrices:
	\begin{align}
	    \rmm(\overrightarrow{\alpha})&:=(U_0)^{u_1}(D_0)^{d_1}(U_0)^{u_2}(D_0)^{d_2}\cdots(U_0)^{u_{s-1}}(D_0)^{d_{s-1}}(U_0)^{u_s}(D_0)^{d_s+1},\label{eq:fen2}\\
	    \drm(\overrightarrow{\alpha})&:=(U_0)^{u_1}(D_0)^{d_1}(U_0)^{u_2}(D_0)^{d_2}\cdots(U_0)^{u_{s-1}}(D_0)^{d_{s-1}}(U_0)^{u_s}(D_0)^{d_s}(U_0),\label{eq:fen22}.
	\end{align}
	The rank polynomial $\rank(\alpha,q)$ is given by the top left entry of $\rmm(\overrightarrow{\alpha})$. Alternatively, one can take the sum of the entries on the top row of $\drm(\overrightarrow{\alpha})$.\label{prop:fence}
	\end{prop}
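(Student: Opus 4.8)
The plan is to realize $\overrightarrow{\alpha}$ as a left‑to‑right product of $|\alpha|+1$ copies of the one‑point poset $\overrightarrow{\bullet}$ and to propagate \emph{both} rank matrices along this product, using the four ``append one vertex'' rules already available: multiplicativity of $\rmm_q$ under $\diplus$ (the proposition above), multiplicativity of $\drm_q$ under $\revdiplus$ (Proposition~\ref{prop:dualadding}), the mixed identities $\drm_q(\overrightarrow{P})\,\rmm_q(\overrightarrow{Q})=\rmm_q(\overrightarrow{P}\revdiplus\overrightarrow{Q})$ and $\rmm_q(\overrightarrow{P})\,\drm_q(\overrightarrow{Q})=\drm_q(\overrightarrow{P}\diplus\overrightarrow{Q})$ of Lemma~\ref{lem:dualregularconnection}, and the base values $\rmm_q(\overrightarrow{\bullet})=D_0$, $\drm_q(\overrightarrow{\bullet})=U_0$.

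Write $n:=|\alpha|$ and label the vertices $x_0,\dots,x_n$ as in the statement. First I would verify that, scanning the fence from left to right, attaching $x_i$ to the current right endpoint $x_{i-1}$ is exactly the operation $\diplus\overrightarrow{\bullet}$ when the $i$‑th edge is a down‑step and $\revdiplus\overrightarrow{\bullet}$ when it is an up‑step: by definition $\overrightarrow{P}\diplus\overrightarrow{\bullet}$ adjoins the single cover relation $x_{i-1}\succeq x_i$ and makes $x_i$ the new right end, and dually for $\revdiplus\overrightarrow{\bullet}$. Because these operations only adjoin one relation and never identify vertices, the left‑associated product
\[
P_k:=\bigl(\cdots\bigl((\overrightarrow{\bullet}\ast_1\overrightarrow{\bullet})\ast_2\overrightarrow{\bullet}\bigr)\cdots\bigr)\ast_k\overrightarrow{\bullet},\qquad \ast_i\in\{\diplus,\revdiplus\},
\]
has underlying poset the transitive closure of the cover chain on $x_0,\dots,x_k$ (intermediate closures may be taken since closure is idempotent), with $x_0$ and $x_k$ as its two special vertices. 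Choosing $\ast_i=\revdiplus$ on the $u_j$ up‑steps of the $j$‑th ascent and $\ast_i=\diplus$ on the $d_j$ down‑steps then gives $P_n=\overrightarrow{\alpha}$; a block with $u_j=0$ or $d_j=0$ simply contributes no operations, so allowing $d_s=0$ (and, if desired, $u_1=0$) causes no trouble.

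Next I would prove by induction on $k$ that, setting $X_i:=U_0$ when $\ast_i=\revdiplus$ and $X_i:=D_0$ when $\ast_i=\diplus$, and $N_k:=X_1X_2\cdots X_k$ (with $N_0=I$),
\[
\rmm_q(P_k)=N_k\,D_0,\qquad \drm_q(P_k)=N_k\,U_0.
\]
The base case $k=0$ reads $\rmm_q(\overrightarrow{\bullet})=D_0$, $\drm_q(\overrightarrow{\bullet})=U_0$. For the step, $P_k=P_{k-1}\ast_k\overrightarrow{\bullet}$: multiplying on the right by $\rmm_q(\overrightarrow{\bullet})=D_0$, if $\ast_k=\diplus$ then $\rmm_q(P_k)=\rmm_q(P_{k-1})\,D_0=N_{k-1}D_0D_0=N_{k-1}X_kD_0$, while if $\ast_k=\revdiplus$ then by Lemma~\ref{lem:dualregularconnection} $\rmm_q(P_k)=\drm_q(P_{k-1})\,D_0=N_{k-1}U_0D_0=N_{k-1}X_kD_0$, so in either case $\rmm_q(P_k)=N_kD_0$; multiplying instead on the right by $\drm_q(\overrightarrow{\bullet})=U_0$ gives, by Lemma~\ref{lem:dualregularconnection} when $\ast_k=\diplus$ and by Proposition~\ref{prop:dualadding} when $\ast_k=\revdiplus$, the value $\drm_q(P_k)=N_{k-1}X_kU_0=N_kU_0$. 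Taking $k=n$ and grouping the factors of $N_n$ into ascent/descent blocks yields $N_n=(U_0)^{u_1}(D_0)^{d_1}\cdots(U_0)^{u_s}(D_0)^{d_s}$, so $\rmm_q(\overrightarrow{\alpha})=N_nD_0$ and $\drm_q(\overrightarrow{\alpha})=N_nU_0$ are exactly $(\ref{eq:fen2})$ and $(\ref{eq:fen22})$.

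Finally, the two readout statements follow from the definitions of the matrices: the top‑left entry of $\rmm_q(\overrightarrow{P})$ is $\rank(\overrightarrow{P};q)=\sum_{I\in J(P)}q^{|I|}$, which for $\overrightarrow{P}=\overrightarrow{\alpha}$ is $\rank(\alpha;q)$; and the top row of $\drm_q(\overrightarrow{P})$ consists of the generating functions of the ideals containing $x_R$ and of those avoiding $x_R$, so its entries sum to $\rank(\alpha;q)$ (equivalently, use $\rmm_q(\overrightarrow{P})=\drm_q(\overrightarrow{P})\left[\begin{smallmatrix}1&-1\\1&0\end{smallmatrix}\right]$ from Lemma~\ref{lem:dualregularconnection}, under which the top‑left entry of $\rmm_q$ is the sum of the top‑row entries of $\drm_q$). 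I expect the only genuinely content‑bearing step — and the one to state carefully — to be the first one: pinning down the orientations of $\overrightarrow{U_n}$ versus $\overrightarrow{D_n}$ and which end of $\overrightarrow{\bullet}$ is glued on, so that up‑steps correspond to $\revdiplus$ and down‑steps to $\diplus$ (and checking the boundary/degenerate cases). Once that is in place, the rest is a routine induction with the multiplication rules already established.
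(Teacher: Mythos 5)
Your proof is correct and is essentially the argument the paper intends: the paper only sketches this proposition (build the fence from copies of $\overrightarrow{\bullet}$, with the final factor $U_0$ or $D_0$ chosen by how the rightmost node is attached), and your induction propagating both $\rmm_q(P_k)=N_kD_0$ and $\drm_q(P_k)=N_kU_0$ simultaneously via Lemma~\ref{lem:dualregularconnection} and Proposition~\ref{prop:dualadding} is a clean formalization of exactly that sketch. The identification of up-steps with $\revdiplus$ and down-steps with $\diplus$, and the readout of $\rank(\alpha;q)$ from the top-left entry of $\rmm_q$ (equivalently the top-row sum of $\drm_q$), are both handled correctly.
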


	\begin{example}We can construct the fence poset for $\alpha=(2,1,1,3)$ from Figure \ref{fig:2113} by connecting $\overrightarrow{U_{2}}\diplus\overrightarrow{U_{1}}\diplus\overrightarrow{D_{2}}$ or equivalently $ \overrightarrow{\bullet}\revdiplus\overrightarrow{\bullet}\revdiplus\overrightarrow{\bullet}\diplus\overrightarrow{\bullet}\revdiplus\overrightarrow{\bullet}\diplus\overrightarrow{\bullet}\diplus\overrightarrow{\bullet}\diplus\overrightarrow{\bullet}$.
		\begin{figure}[ht]
			\centering
			\begin{tikzpicture}[scale=.6]
				\draw[ultra thick,red,dotted] (2,3)--(3,2) (4,3)--(5,7/3);
				\fill(0,1) circle(.1);
				\fill(1,2) circle(.1);
				\fill(2,3) circle(.1);
				\fill(3,2) circle(.1);
				\fill(4,3) circle(.1);
				\fill(5,7/3) circle(.1);
				\fill(6,5/3) circle(.1);
				\fill(7,1) circle(.1);
				\draw (0,1)--(2,3) (3,2)--(4,3) (5,7/3)--(7,1);
				\node at (-0.2,1.5) {$x_L$};
				\node at (7.2, 1.5) {$x_R$};
				\node at (3.5,.2) {$\overrightarrow{U_{2}}\diplus\overrightarrow{U_{1}}\diplus\overrightarrow{D_{2}}$};
			\end{tikzpicture}\raisebox{.7cm}{$\quad =\quad$}
			\begin{tikzpicture}[scale=.7]
				\draw[ultra thick,red,dotted] (2,3)--(3,2) (4,3)--(7,1);
				\draw[ultra thick,blue,dotted] (0,1)--(2,3) (3,2)--(4,3);
				\fill(0,1) circle(.1);
				\fill(1,2) circle(.1);
				\fill(2,3) circle(.1);
				\fill(3,2) circle(.1);
				\fill(4,3) circle(.1);
				\fill(5,7/3) circle(.1);
				\fill(6,5/3) circle(.1);
				\fill(7,1) circle(.1);
				\node at (-0.2,1.5) {$x_L$};
				\node at (7.2, 1.5) {$x_R$};
				\node at (3.5,.2) { $\overrightarrow{\bullet}\revdiplus\overrightarrow{\bullet}\revdiplus\overrightarrow{\bullet}\diplus\overrightarrow{\bullet}\revdiplus\overrightarrow{\bullet}\diplus\overrightarrow{\bullet}\diplus\overrightarrow{\bullet}\diplus\overrightarrow{\bullet}$};
			\end{tikzpicture}
			\label{fig:2113connect}
		\end{figure}
		The corresponding rank matrix is given by:
		\begin{align*}&\rmm(\overrightarrow{(2,1,1,3)})=
			\begin{bmatrix}
				[4]_q&-q^3\\1&0 
			\end{bmatrix}
			\begin{bmatrix}
				[3]_q&-q^2\\1&0 
			\end{bmatrix}
			\begin{bmatrix}
				[4]_q&-q [3]_q\\ [3]_q&-q[2]_q 
			\end{bmatrix}=U_0^2 D_0 U_0 D_0^4\\
			&=    \begin{bmatrix}
1\miniplus 3q\miniplus 5q^2\miniplus 6q^3\miniplus 6q^4\miniplus 5q^5\miniplus 3q^6\miniplus 2q^7\miniplus q^8& -q(1\miniplus 3q\miniplus 5q^2\miniplus 5q^3\miniplus 5q^4\miniplus 3q^5\miniplus q^6)\\
1\miniplus 2q\miniplus 2q^2\miniplus 2q^3\miniplus q^4\miniplus q^5& -q(1\miniplus 2q\miniplus 2q^2\miniplus q^3\miniplus q^4)
			\end{bmatrix}
		\end{align*}
		Note that the top left hand side evaluates to $\rank((2,1,1,3);q)$ that we calculated in Example \ref{ex:rankpolyfor2113}. Furthermore, the whole matrix matches $M_q(4,3,2,2,2)$ from Example~\ref{ex:32/9matrices}.
	\end{example}
	
	After building up the fences this way, we can use the $\close\,$ operation to make our fences circular. One should note that the operation $\close$ adds a new relation $x_R\succeq x_L$ to our poset. If $\alpha$ has an even number of parts, this corresponds to increasing the last part by $1$, whereas if it has an odd number of parts (larger than $1$) an extra part of size $1$ is added. In any case we end up with an even number of parts in our circular fence. That is why the matrix for the circular fence that we define in Equation \ref{eq:circfen2} below differs from the one in  Equation \ref{eq:fen2} in the last entry.  
	
	\begin{prop} Let $P$ be the circular fence poset corresponding $\alpha =(u_1, d_1, u_2,d_2,\ldots,u_s,d_s)$. The circular rank polynomial $\overline{\rank}(\alpha,q)$ is given by the trace of the following matrix:\label{prop:circfence}
		
		\begin{align}
			\clmat_q(\alpha)&:=(U_0)^{u_1}(D_0)^{d_1}(U_0)^{u_2}(D_0)^{d_2}\cdots(U_0)^{u_s}(D_0)^{d_s}\label{eq:circfen2}\\&=\rmm_q(\overrightarrow{\alpha})\cdot D_0^{-1}=\rmm_q(\overrightarrow{\alpha})\cdot U_0^{-1}.\tag*{}
		\end{align} 
	\end{prop}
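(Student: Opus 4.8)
The plan is to reduce the statement to Proposition~\ref{prop:fence} together with the identity $\mathrm{tr}(\rmm_q(\overrightarrow{P}))=\rank(\close(\overrightarrow{P}))$ established in Section~\ref{sec:oriented posets}. First I would dispose of the two matrix identities, which are essentially bookkeeping. Both $D_0$ and $U_0$ are invertible ($\det D_0=\det U_0=q$); comparing \eqref{eq:fen2} with \eqref{eq:circfen2} gives $\rmm_q(\overrightarrow{\alpha})=\clmat_q(\alpha)\cdot D_0$, and comparing \eqref{eq:fen22} with \eqref{eq:circfen2} gives $\drm_q(\overrightarrow{\alpha})=\clmat_q(\alpha)\cdot U_0$, so that $\clmat_q(\alpha)=\rmm_q(\overrightarrow{\alpha})\cdot D_0^{-1}=\drm_q(\overrightarrow{\alpha})\cdot U_0^{-1}$. (That the last two expressions agree can also be read off from Lemma~\ref{lem:dualregularconnection}, since $D_0\begin{bmatrix}0&1\\-1&1\end{bmatrix}=U_0$.)

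For the trace formula the key point is that $\clmat_q(\alpha)$ is itself the rank matrix of a fence. I would set $\beta:=(u_1,d_1,\ldots,u_s,d_s-1)$, a composition of even length in which $d_s-1=0$ is permitted exactly as in Proposition~\ref{prop:fence}. Applying that proposition to $\beta$ yields $\rmm_q(\overrightarrow{\beta})=(U_0)^{u_1}(D_0)^{d_1}\cdots(U_0)^{u_s}(D_0)^{(d_s-1)+1}=(U_0)^{u_1}(D_0)^{d_1}\cdots(U_0)^{u_s}(D_0)^{d_s}=\clmat_q(\alpha)$. Hence $\mathrm{tr}(\clmat_q(\alpha))=\mathrm{tr}(\rmm_q(\overrightarrow{\beta}))=\rank(\close(\overrightarrow{\beta}))$, and it only remains to show that $\close(\overrightarrow{\beta})$ is isomorphic, as a poset, to the circular fence $\overline{F}(\alpha)$.

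This last identification is what I expect to be the main obstacle. It should follow from the analysis preceding the proposition: applying $\close$ to $F(\beta)$ adjoins the relation $x_R\succeq x_L$, and since $\beta$ has an even number of parts this amounts to increasing its last part by one, producing the circular fence built from $(u_1,d_1,\ldots,u_s,(d_s-1)+1)=\alpha$; in the boundary case $d_s-1=0$, where $F(\beta)$ effectively has an odd number of parts and terminates at a local maximum, the same discussion instead appends a new part of size $1$ and again returns $\overline{F}(\alpha)$. To make this fully rigorous I would track the up/down runs around the resulting cycle, checking that the arc added by $\close$ merges into the terminal run of $F(\beta)$, that $x_L$ and $x_R$ turn into an ordinary local minimum and an ordinary interior vertex, and that the cyclic sequence of run lengths one obtains is precisely $(u_1,d_1,\ldots,u_s,d_s)$ read around the circle -- with care for the degenerate configurations ($d_s=1$, or some $u_i$ or $d_i$ small). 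Granting this, $\rank(\close(\overrightarrow{\beta}))=\overline{\rank}(\alpha)$ and the proof is complete. If one further wishes to allow $\alpha$ to have an odd number of parts ($d_s=0$), one instead exhibits $\clmat_q(\alpha)=\drm_q(\overrightarrow{\gamma})$ for a suitable $\gamma$ and uses $\revclose$ and Proposition~\ref{prop:dualadding} in place of the trace identity above; the geometric bookkeeping is entirely analogous.
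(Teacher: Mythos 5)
Your argument is correct and is essentially the one the paper intends: the paper gives no formal proof, but the paragraph preceding the proposition (and the example after it) justify it exactly as you do, by viewing $\clmat_q(\alpha)$ as $\rmm_q$ of the fence with last part decreased by one and observing that $\close$ restores that part. You also rightly read the second equality as $\drm_q(\overrightarrow{\alpha})\cdot U_0^{-1}$ (the statement's $\rmm_q$ there is a typo), consistent with Equation~\eqref{eq:fen22} and Lemma~\ref{lem:dualregularconnection}.
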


	\begin{example}For the circular fence poset corresponding to $\alpha=(2,1,1,3)$ from Figure \ref{fig:2113} we connect the two ends of $\overrightarrow{U_{2}}\diplus\overrightarrow{U_{1}}\diplus\overrightarrow{D_{1}}$. We can equivalently look at the trace of the matrix $\clmat_q((2,1,1,3))=U_0^2 \cdot D_0 \cdot U_0 \cdot D_0^2$. Note that taking the trace gives us the final down-step connecting the two ends.
		\begin{figure}[ht]
			\centering
			\begin{tikzpicture}[scale=.6]
				\draw[ultra thick,blue,dotted] (2,3)--(3,2) (4,3)--(5,7/3);
				\draw[ultra thick,red,dotted,->] (6,5/3)--(6.75,7/6) (-.75,1.75)--(0,1);
				\draw[ultra thick,red,dotted,->](6,5/3)--(6.75,7/6);
				\fill(0,1) circle(.1);
				\fill(1,2) circle(.1);
				\fill(2,3) circle(.1);
				\fill(3,2) circle(.1);
				\fill(4,3) circle(.1);
				\fill(5,7/3) circle(.1);
				\fill(6,5/3) circle(.1);
				\draw (0,1)--(2,3) (3,2)--(4,3) (5,7/3)--(6,5/3) ;
				\node at (3.5,.4) {$\close(\overrightarrow{U_{2}}\diplus\overrightarrow{U_{1}}\diplus\overrightarrow{D_{1}})$};
			\end{tikzpicture}\raisebox{.7cm}{$\quad =\quad$}
			\begin{tikzpicture}[scale=.7]
				\draw[ultra thick,red,dotted] (2,3)--(3,2) (4,3)--(6,5/3);
				\draw[ultra thick,red,dotted,->] (6,5/3)--(6.75,7/6) (-.75,1.75)--(0,1);
				\draw[ultra thick,red,dotted,->](6,5/3)--(6.75,7/6);
					\draw[ultra thick,blue,dotted] (0,1)--(2,3) (3,2)--(4,3);
				\fill(0,1) circle(.1);
				\fill(1,2) circle(.1);
				\fill(2,3) circle(.1);
				\fill(3,2) circle(.1);
				\fill(4,3) circle(.1);
				\fill(5,7/3) circle(.1);
				\fill(6,5/3) circle(.1);
				\node at (3,.4) {$\close(\overrightarrow{\bullet}\revdiplus\overrightarrow{\bullet}\revdiplus\overrightarrow{\bullet}\diplus\overrightarrow{\bullet}\revdiplus\overrightarrow{\bullet}\diplus\overrightarrow{\bullet}\diplus\overrightarrow{\bullet})$};
			\end{tikzpicture}
			\label{fig:2113connect2}
		\end{figure}
		\begin{align*}&\clmat_q((2,1,1,3))=
			\begin{bmatrix}
				[4]_q&-q^3\\1&0 
			\end{bmatrix}
			\begin{bmatrix}
				[3]_q&-q^2\\1&0 
			\end{bmatrix}
			\begin{bmatrix}
				[3]_q&-q [2]_q\\ [2]_q&-q[1]_q 
			\end{bmatrix}\\
			&=    \begin{bmatrix}
				1+3q+5q^2+6q^3+6q^4+5q^5+3q^6+2q^7+q^8&-q[3]_q(1+q)(1+q+q^3)\\ 1+2q+2q^2+q^3+q^4&-q(1+2q+q^2+q^3).
			\end{bmatrix}
		\end{align*}
		
		The trace of the matrix is equal to $[5]_q[4]_q=\overline{\rank}((2,1,1,3);q)$ from Example \ref{ex:rankpolyfor2113}.
	\end{example}
	\begin{comment}
	\begin{rmk} The matrices $U_0$ and $D_0$ satisfy
\begin{align}
    D_0 U_0 - U_0 D_0 =\begin{bmatrix}
        1 &q^2-1\\1-q &1
    \end{bmatrix}.
\end{align}
Note that this is a $q$-deformation of the Weyl algebra relation $XY-YX=I$. In fact, the algebra generated by $U_0, D_0$ can be seen as a $q$-deformation of a quotient of the Weyl algebra.
	\end{rmk}
 \end{comment}
	\begin{prop}\label{T:1} Let $\llbracket c_1,c_2,\ldots,c_k\rrbracket$ and $[a_1,a_2,\cdots,a_{2m}]$ be the continued fraction expressions for $r/t\geq 1$. Then we have:
	\begin{align}
	    M_q(c_1,c_2,\ldots,c_k)&=\clmat_q(\alpha_{r/q}) \cdot D_0=\rmm_q(\overrightarrow{\alpha_{r/q}}), \label{eq:Mq}\\
	    M^+_q(a_1,a_2,\cdots,a_{2m})&%=\begin{bmatrix}
				%q\,\rank(\overrightarrow{P};q) & \rank_0(\overrightarrow{P};q) \\
				%q\, _0\rank(\overrightarrow{P};q) & _0\rank_0(\overrightarrow{P};q)\end{bmatrix}
	=\clmat_q(\alpha_{r/q}) \cdot D_0 \cdot U_0=\drm_q(\overrightarrow{(a_1-1,a_2,\ldots,a_{2m},1)}))\label{eq:M+q}.
	\end{align}
	Furthermore,
		\begin{align}
			\mathrm{tr}(M_q(c_1,c_2,\ldots,c_k))&=\overline{\rank}((a_1-1,a_2,\ldots,a_{2m};q),\label{eq:traceM}\\
	\mathrm{tr}(M_q^+(a_1,a_2,\ldots,a_{2m}))&=\overline{\rank}((a_1,a_2,\ldots,a_{2m});q)\label{eq:traceM+}.
		\end{align}
	\end{prop}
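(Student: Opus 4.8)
The whole statement should collapse, after a change of generators, onto Propositions~\ref{prop:fence} and~\ref{prop:circfence}, so the plan is to rewrite $M_q$ and $M^+_q$ purely in terms of $U_0=R_q$ and $D_0=R_q^2S_q$. From $D_0=R_q^2S_q$ one gets $S_q=R_q^{-2}D_0=U_0^{-2}D_0$, hence $R_q^{c}S_q=U_0^{\,c-2}D_0$ for all $c$, and $R_qS_qR_q=R_q^{-1}D_0R_q=U_0^{-1}D_0U_0$, hence $(R_qS_qR_q)^{a}=U_0^{-1}D_0^{\,a}U_0$ for all $a$. These two substitution rules are the only algebraic input; the rest is bookkeeping against the continued-fraction conversion~\eqref{eq:atocconversion}.

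First I would prove~\eqref{eq:Mq}. Plugging $R_q^{c_i}S_q=U_0^{\,c_i-2}D_0$ into~\eqref{eq:pic} gives $M_q(c_1,\ldots,c_k)=\prod_{i=1}^{k}U_0^{\,c_i-2}D_0$. By~\eqref{eq:atocconversion} the sequence $(c_1,\ldots,c_k)$ attached to $r/t=[a_1,\ldots,a_{2m}]$ is $(a_1+1,\,2^{a_2-1},\,a_3+2,\,2^{a_4-1},\ldots,a_{2m-1}+2,\,2^{a_{2m}-1})$, so the exponents $c_i-2$ read $a_1-1$, then $a_2-1$ zeros, then $a_3$, then $a_4-1$ zeros, and so on; since $U_0^{0}D_0=D_0$ each block of $2$'s merely accumulates a power of $D_0$, and the product telescopes to $U_0^{a_1-1}D_0^{a_2}U_0^{a_3}D_0^{a_4}\cdots U_0^{a_{2m-1}}D_0^{a_{2m}}$. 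With $\alpha_{r/t}=(a_1-1,a_2,\ldots,a_{2m}-1)$, Proposition~\ref{prop:fence} says this product is precisely $\rmm_q(\overrightarrow{\alpha_{r/t}})=\clmat_q(\alpha_{r/t})\cdot D_0$, giving~\eqref{eq:Mq}.

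For~\eqref{eq:M+q} I would substitute $R_q^{a_{2i+1}}=U_0^{a_{2i+1}}$ and $(R_qS_qR_q)^{a_{2i}}=U_0^{-1}D_0^{a_{2i}}U_0$ into~\eqref{eq:pia}: the interior factors $U_0\cdot U_0^{a_{2i+1}}\cdot U_0^{-1}$ cancel down to $U_0^{a_{2i+1}}$, the leading $U_0^{a_1}\cdot U_0^{-1}$ becomes $U_0^{a_1-1}$, and one trailing $U_0$ is left over, so $M^+_q(a_1,\ldots,a_{2m})=U_0^{a_1-1}D_0^{a_2}U_0^{a_3}\cdots U_0^{a_{2m-1}}D_0^{a_{2m}}U_0=\clmat_q(\alpha_{r/t})\cdot D_0\cdot U_0$, which Proposition~\ref{prop:fence} recognises as a dual rank matrix of the corresponding fence poset. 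The trace identities then follow at once: Proposition~\ref{prop:circfence} gives $\mathrm{tr}(\clmat_q(\beta))=\overline{\rank}(\beta;q)$ for any $\beta$ of even length, so $\mathrm{tr}(M_q(c_1,\ldots,c_k))=\mathrm{tr}\big(\clmat_q(\alpha_{r/t})D_0\big)=\mathrm{tr}\big(\clmat_q((a_1-1,a_2,\ldots,a_{2m}))\big)=\overline{\rank}((a_1-1,a_2,\ldots,a_{2m});q)$, which is~\eqref{eq:traceM}; and by cyclic invariance of the trace one moves the trailing $U_0$ in $M^+_q$ to the front, fusing $U_0\cdot U_0^{a_1-1}=U_0^{a_1}$, so $\mathrm{tr}(M^+_q(a_1,\ldots,a_{2m}))=\mathrm{tr}\big(\clmat_q((a_1,a_2,\ldots,a_{2m}))\big)=\overline{\rank}((a_1,a_2,\ldots,a_{2m});q)$, which is~\eqref{eq:traceM+}.

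No step is deep; the substance sits entirely in Propositions~\ref{prop:fence},~\ref{prop:circfence} and the conversion~\eqref{eq:atocconversion}. The one spot demanding care is the index bookkeeping in that conversion together with the treatment of the two boundary terms — the initial $U_0^{a_1-1}$ and, for $M^+_q$, the trailing $U_0$ — and checking that every exponent that ends up as a part of a composition is nonnegative, which holds exactly because $r/t\ge 1$ forces $a_1\ge 1$ and $a_i\ge 1$ for $i\ge 2$.
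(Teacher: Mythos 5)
Your proposal is correct and follows essentially the same route as the paper: substitute $U_0=R_q$, $D_0=R_q^2S_q$ into the defining products \eqref{eq:pic} and \eqref{eq:pia}, telescope using the conversion \eqref{eq:atocconversion} to obtain $(U_0)^{a_1-1}(D_0)^{a_2}\cdots(U_0)^{a_{2m-1}}(D_0)^{a_{2m}}$ (times a trailing $U_0$ for $M^+_q$), identify this with the rank matrices via Propositions~\ref{prop:fence} and~\ref{prop:circfence}, and use cyclic invariance of the trace to absorb the trailing $U_0$. Your extra remark about nonnegativity of the exponents is a sensible bit of care the paper leaves implicit.
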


	\begin{proof} Remember that by Equation~\ref{eq:atocconversion}, we have $\llbracket c_1,c_2,\ldots,c_k\rrbracket=\llbracket a_1+1,2^{a_2-1},a_3+2,2^{a_4-1},a_5+2,\ldots,a_{2m-1}+2,2^{a_{2m}-1}\rrbracket$. We plug in $D_0=R_q^2S_q$ and $U_0=R_q$ in Equations \ref{eq:pic} \ref{eq:pia} to get a rank matrix of a fence poset.
		\begin{align*}
				M_q(c_1,c_2,\ldots,c_k)&=
R_q^{a_1+1}S_q (R_q^2 S_q)^{a_2-1} R_q^{a_3+2}S_q (R_q^2 S_q)^{a_4-1}\cdots R_q^{a_{2m-1}+2} S_q(R_q^2 S_q)^{a_{2m}-1}\\
				&=(U_0)^{a_1-1}(D_0)^{a_2} (U_0)^{a_3}(D_0)^{a_4}\cdots (U_0)^{a_{2m-1}}(D_0)^{a_{2m}}\\
		&=\rmm_q(\overrightarrow{\alpha_{r/t}})),\\
			M_q^+(a_1,a_2,\ldots,a_{2m})&=R_q^{a_1}(R_qS_qR_q)^{a_2} R_q^{a_3}(R_qS_qR_q)^{a_4}\cdots R_q^{a_{2m-1}}(R_qS_qR_q)^{a_{2m}}\\
			&=R_q^{a_1-1}(R_q^2S_q)^{a_2}R_q^{a_3}(R_q^2S_q)^{a_4}\cdots (R_q)^{a_{2m-1}}(R_q^2S_q)^{a_{2m}}R_q\\
			&=(U_0)^{a_1-1}(D_0)^{a_2}(U_0)^{a_3}(D_0)^{a_4}\cdots(U_0)^{a_{2m-1}}(D_0)^{a_{2m}}(U_0)\\ &=\rmm_q(\overrightarrow{\alpha_{r/t}}))\cdot (U_0). \end{align*} The trace formula for $M_q(c_1,c_2,\ldots,c_k)$ follows directly. For the case of $M_q^+(a_1,a_2,\ldots,a_{2m})$, we take advantage of the fact that trace remains unchanged if we move $U_0$ from one side to the other.
			\begin{align*}
			\mathrm{tr}(M_q^+(a_1,a_2,\ldots,a_{2m}))&=\mathrm{tr}(\clmat_q(\alpha_{r/q}) \cdot D_0 \cdot U_0)=\mathrm{tr}(U_0 \cdot \clmat_q(\alpha_{r/q}) \cdot D_0 )\\&
			=\mathrm{tr}(\clmat_q((a_1,a_2,\ldots,a_{2m})))=\overline{\rank}((a_1,a_2,\ldots,a_{2m});q).
	\end{align*}	
 Alternatively, one can consider the continuant expressions for the matrices $M_q$ and $M_q^+$ (see Propositions 5.2 and 5.3 from \cite{originalconj}) to verify directly that the entries match those of the rank matrices.  \end{proof}
	
	The above proposition allows us to apply the discussion of unimodality of rank polynomials of circular fences to the traces of matrices in $\mathrm{PSL}_q(2,\ZZ)$. We have one degenerate case of $\llbracket c_1,c_2,\ldots,c_k\rrbracket$ with $c_i=2$ for all $i$. In that case the corresponding fence is a chain of length $k$, and taking the trace adds the relation $x_1\succeq x_k$ where   $x_2$ is the minimal node and $x_k$ is the maximal. The resulting structure has all nodes equal to each other, so the only two ideals consist of the empty set and the whole chain. Another violation of unimodality comes from the cases $(1,k,1,k)$ and $(k,1,k,1)$ whose circular rank polynomials are not unimodal.
	\begin{corollary}\label{cor:counters} The polynomials $\mathrm{tr}(M_q(c_1,c_2,\ldots,c_k))$ are not always unimodal. For $c_i\geq 2$ we get the following counterexamples:
\begin{align}
    \tag*{(CE 1)}\operatorname{tr}(M_q(2^k))=&1+q^k &\text{ where } k\geq 2,\\
    \tag*{(CE 2)}\operatorname{tr}(M_q(n+2,n+2)=&1+2q+3q^2+\cdots +(n+1)q^{n}&\\\tag*{} &+ n\,q^{n+1}+(n+1)q^{n+2}+\ldots &
     \\ \tag*{} &\ldots+3q^{2n-2}+2q^{2n-1}+ q^{2n} &\text{ where } n\geq 1,\\
    \tag*{(CE 2')} \operatorname{tr}(M_q(2^{n-1},3,2^{n-1},3))=&\operatorname{tr}(M_q(n+2,n+2)) &\text{ where }n\geq 1.
\end{align}
	\end{corollary}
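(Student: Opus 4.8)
The plan is to establish all three identities by direct $2\times2$ matrix computations, using the substitutions $U_0=R_q$ and $D_0=R_q^2S_q$, the defining products $M_q(c_1,\dots,c_k)=R_q^{c_1}S_q\cdots R_q^{c_k}S_q$, and the explicit forms of the chain matrices $\rmm_q(\overrightarrow{U_n})=R_q^{n+2}S_q$ and $\rmm_q(\overrightarrow{D_m})=D_0^{m+1}$ recorded in Section~\ref{sec:oriented posets}. I will not need the full strength of Proposition~\ref{T:1}; besides the items just listed, only the trace formula $\operatorname{tr}(\clmat_q(\alpha))=\overline{\rank}(\alpha;q)$ and the cyclic-shift invariance of circular rank polynomials from Theorem~\ref{thm:ourtheorem} will be used.

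For (CE~1): since $M_q(2^k)=(R_q^2S_q)^k=D_0^k=\rmm_q(\overrightarrow{D_{k-1}})$, the explicit form of $\rmm_q(\overrightarrow{D_{k-1}})$ gives $\operatorname{tr}(M_q(2^k))=[k+1]_q-q[k-1]_q$, which telescopes to $1+q^k$; for $k\ge2$ this has coefficient vector $(1,0,\dots,0,1)$ and is therefore not unimodal. This is the degenerate case flagged just before Corollary~\ref{cor:counters}: $\close(\overrightarrow{D_{k-1}})$ identifies all of its vertices into a single equivalence class, so the only ideals are the empty one and the full one, and the matrix computation above is the clean way to extract $1+q^k$ without drawing the collapsed poset.

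For (CE~2): factoring $R_q^{n+2}S_q=R_q^n(R_q^2S_q)=U_0^nD_0=\rmm_q(\overrightarrow{U_n})$ gives $M_q(n+2,n+2)=\bigl(\rmm_q(\overrightarrow{U_n})\bigr)^2$; squaring this matrix and reading off the diagonal yields $\operatorname{tr}(M_q(n+2,n+2))=[n+2]_q^2-2q^{n+1}$. The coefficients of $[n+2]_q^2$ form the symmetric triangle $1,2,\dots,n,n{+}1,n{+}2,n{+}1,n,\dots,2,1$ in degrees $0$ through $2n{+}2$, so subtracting $2q^{n+1}$ drops the central coefficient from $n{+}2$ to $n$, producing a strict interior local minimum at degree $n+1$; hence the polynomial is not unimodal for any $n\ge1$. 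I will also record that $M_q(n+2,n+2)=U_0^nD_0U_0^nD_0=\clmat_q((n,1,n,1))$, so that $[n+2]_q^2-2q^{n+1}=\overline{\rank}((n,1,n,1);q)$ and (CE~2) is precisely the non-unimodal circular rank family appearing in Conjecture~\ref{ourconj}. For (CE~2'): writing $R_q^3S_q=R_q(R_q^2S_q)=U_0D_0$, we get $M_q(2^{n-1},3,2^{n-1},3)=D_0^{n-1}U_0D_0^{n}U_0D_0$; cycling the trailing $D_0$ to the front and applying $\operatorname{tr}(\clmat_q(\cdot))=\overline{\rank}(\cdot;q)$ identifies this trace with $\overline{\rank}((1,n,1,n);q)$, which by Theorem~\ref{thm:ourtheorem} equals $\overline{\rank}((n,1,n,1);q)=\operatorname{tr}(M_q(n+2,n+2))$; non-unimodality is then inherited from (CE~2).

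I do not anticipate a genuine obstacle: once the $U_0,D_0$ factorizations of $M_q$ are in place, each of the three traces is essentially a one-line computation. The two spots that need care are (i) the all-$2$'s case, where $\close(\overrightarrow{D_{k-1}})$ is genuinely degenerate as a poset, so the value $1+q^k$ should be read off from the matrix identity $\operatorname{tr}(\rmm_q(\overrightarrow{D_{k-1}}))=[k+1]_q-q[k-1]_q$ rather than from a Hasse diagram; and (ii) the bookkeeping in (CE~2'), where after a cyclic permutation of the matrix product one must recognize that $M_q(2^{n-1},3,2^{n-1},3)$ has the same trace as $\clmat_q$ of a cyclic rotation of $(n,1,n,1)$, so that it indeed coincides with $\operatorname{tr}(M_q(n+2,n+2))$.
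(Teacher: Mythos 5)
Your proposal is correct and follows essentially the same route as the paper: it identifies $M_q(2^k)$ with $D_0^k$ (the degenerate collapsed chain discussed just before the corollary) and identifies $\operatorname{tr}(M_q(n+2,n+2))$ and $\operatorname{tr}(M_q(2^{n-1},3,2^{n-1},3))$ with the circular rank polynomials of $(n,1,n,1)$ and $(1,n,1,n)$, the known non-unimodal family, merely making the $U_0,D_0$ factorizations and the $2\times 2$ trace computations explicit. One small remark: your closed form $[n+2]_q^2-2q^{n+1}$ is the correct polynomial (degree $2n+2$, central coefficient dropping from $n+2$ to $n$), which shows the exponents in the displayed tail of (CE 2) as printed ($q^{2n-2},q^{2n-1},q^{2n}$) are off by two and should read $q^{2n},q^{2n+1},q^{2n+2}$.
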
 
	\begin{conj} \label{con:lec+} Up to cyclic shifts of $\llbracket c_1,c_2,\ldots,c_k\rrbracket$ , the examples listed in Corollary~\ref{cor:counters} are the only counterexamples to unimodality for $\mathrm{tr}(M_q(c_1,c_2,\ldots,c_k))$, $c_i\geq 2$.
	\end{conj}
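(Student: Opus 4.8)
The plan is to push the whole statement through Proposition~\ref{T:1}, which realizes $\mathrm{tr}(M_q(c_1,\ldots,c_k))$ for $c_i\ge 2$ as a circular rank polynomial, and then to feed it into the resolution of Conjecture~\ref{ourconj} obtained in \cite{chainlink}. Writing $U_0=R_q$ and $D_0=R_q^2S_q$ as in Section~\ref{sec:fence}, one has $R_q^{c_i}S_q=U_0^{c_i-2}D_0$, so $M_q(c_1,\ldots,c_k)=U_0^{c_1-2}D_0\,U_0^{c_2-2}D_0\cdots U_0^{c_k-2}D_0$, a product of $U_0$'s and $D_0$'s with exactly $k$ factors $D_0$. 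By Proposition~\ref{prop:circfence} its trace is $\overline{\rank}(\gamma_{\bd c};q)$, where $\gamma_{\bd c}$ is the circular fence composition obtained by reading the maximal runs of $U_0$'s (as up-segments) and of $D_0$'s (as down-segments) around this cyclic word. Explicitly, an up-segment of length $u$ corresponds to a single entry $c_i=u+2\ge 3$ and a down-segment of length $d$ to a block $2^{d-1}$, so $(u_1,d_1,\ldots,u_s,d_s)$ corresponds cyclically to $\llbracket u_1+2,\,2^{d_1-1},\,u_2+2,\,2^{d_2-1},\,\ldots,\,u_s+2,\,2^{d_s-1}\rrbracket$. Since every $D_0$ is a down-step there are no length-$0$ down-segments, so $\gamma_{\bd c}$ is an honest circular fence with all parts $\ge 1$ unless there are no $U_0$'s at all, i.e.\ unless $\bd c=(2,\ldots,2)$; and in that degenerate case $M_q(2^k)=D_0^k=\rmm_q(\overrightarrow{D_{k-1}})$ has trace $[k+1]_q-q[k-1]_q=1+q^k$.

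Next I would record that the correspondence $\bd c\mapsto\gamma_{\bd c}$ is equivariant for cyclic shifts: rotating $\llbracket c_1,\ldots,c_k\rrbracket$ rotates the cyclic $U_0/D_0$ word, hence rotates the segment sequence of $\gamma_{\bd c}$, and every part-rotation of $\gamma_{\bd c}$ arises this way; since the degree of the polynomial recovers $|\gamma_{\bd c}|$, the trace determines $\gamma_{\bd c}$ up to cyclic shift. Then the resolution of Conjecture~\ref{ourconj} in \cite{chainlink} states that $\overline{\rank}(\gamma;q)$ is unimodal for every circular fence composition $\gamma$ with all parts $\ge 1$ unless $\gamma$ is a cyclic shift of $(1,k,1,k)$. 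Combining the two facts, $\mathrm{tr}(M_q(\bd c))$ fails to be unimodal precisely when either $\bd c=(2,\ldots,2)$ with $k\ge 2$ (the polynomial being $1+q^k$), or $\gamma_{\bd c}$ is a cyclic shift of $(1,k,1,k)$ for some $k\ge 1$.

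It then remains to solve $\gamma_{\bd c}\sim(1,k,1,k)$ for $\bd c$ using the dictionary above. The cyclic representative $(1,k,1,k)$ yields $\llbracket 3,2^{k-1},3,2^{k-1}\rrbracket$, which up to cyclic shift is $\llbracket 2^{k-1},3,2^{k-1},3\rrbracket$; the cyclic representative $(k,1,k,1)$ yields $\llbracket k+2,k+2\rrbracket$; the size forces the same $k$, and the only part-rotations of $(1,k,1,k)$ are these two, so they exhaust all solutions, and for $k=1$ both collapse to $\llbracket 3,3\rrbracket$. Matching the three outcomes $\llbracket 2^k\rrbracket$ ($k\ge 2$), $\llbracket k+2,k+2\rrbracket$, $\llbracket 2^{k-1},3,2^{k-1},3\rrbracket$ with (CE 1), (CE 2), (CE 2') of Corollary~\ref{cor:counters}, and using that those three families are genuinely non-unimodal (which is exactly the content of that corollary), would complete the argument. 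The substantive ingredient is the resolution of Conjecture~\ref{ourconj}; given it, the remaining work is the bookkeeping of the last two paragraphs, where the delicate points are (a) verifying that $\bd c=(2,\ldots,2)$ is the \emph{only} source of degeneracy in the $U_0/D_0$-word picture (which holds because down-segments can never have length $0$), and (b) keeping track of the fact that the two part-rotations $(1,k,1,k)$ and $(k,1,k,1)$ translate into $\bd c$-words of different lengths when $k\ge 2$, so that (CE 2) and (CE 2') must be listed as separate families despite sharing the same trace polynomial.
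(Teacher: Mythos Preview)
Your proposal is correct and follows exactly the route the paper indicates: translate $\mathrm{tr}(M_q(\bd c))$ into a circular rank polynomial via Proposition~\ref{T:1}/Proposition~\ref{prop:circfence}, then invoke the resolution of Conjecture~\ref{ourconj} from \cite{chainlink}, with Corollary~\ref{cor:counters} supplying the explicit non-unimodal families. The only imprecision is the assertion that ``every part-rotation of $\gamma_{\bd c}$ arises'' from rotating $\bd c$ --- in fact only the even rotations (by pairs $(u,d)$) do, which is precisely why $(1,k,1,k)$ and $(k,1,k,1)$ yield $\bd c$-words of different lengths --- but you already account for this correctly in your final paragraph, so the argument stands.
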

	As mentioned before in Section \ref{sec:prelim}, this actually is a statement concerning the traces of all matrices in $\mathrm{PSL}_q(2,\ZZ)$, as all such matrices are given by some $M_q(c_1,c_2,\ldots,c_k)$ with $c_i\geq 2$ up to a factor of $q^{\pm N}$.

We can also apply the partial unimodality results from Theorem~$\ref{thm:ourtheorem}$ using the rank polynomial descriptions of traces of $M_q$ and $M_q^+$. For the continued fraction expression $[a_1,a_2,\ldots,a_s]$ one only needs to check the conditions for $\alpha=(a_1,a_2,\ldots,a_s)$. For the corresponding expression $\llbracket c_1,c_2,\ldots,c_k\rrbracket$, one checks $\alpha -1$ formed by subtracting $1$ from the leftmost part of $\alpha$ instead.

	\section{Rank Polynomial Identities}\label{sec:identities}
	
In this section we will be concerned with some multiplicative identities concerning rank polynomials.

	\begin{example} Looking at the rank polynomials for $\overline{\rank}((k,2,1,1,1,1);q)$ one can spot a common factor. 
		\begin{eqnarray*}
			&\overline{\rank}((1,2,1,1,1,1;q)&=(q^6 + 2q^5 + 2q^4 + 3q^3 + 2q^2 + 2q + 1)(q + 1)\\
			&\overline{\rank}((2,2,1,1,1,1;q)&=(q^6 + 2q^5 + 2q^4 + 3q^3 + 2q^2 + 2q + 1)(q^2 + q + 1)\\
			&\overline{\rank}((3,2,1,1,1;q)&=(q^6 + 2q^5 + 2q^4 + 3q^3 + 2q^2 + 2q + 1)(q^3+q^2 + q + 1)
		\end{eqnarray*}
		That polynomial matches $\overline{\rank}((3,1,1,1;q)=(q^6 + 2q^5 + 2q^4 + 3q^3 + 2q^2 + 2q + 1)$.
		The pattern persists when we have a larger number of pieces of size $1$. 
		\begin{align*}
			\overline{\rank}((k,2,1^{2s+2});q)&=[k+1]_q\cdot \overline{\rank}((3,1^{2s+1});q).\tag{Id 0.1}\\
		\end{align*}
		We also have a companion identity when the $3$ on the right hand side varies.
		\begin{align*}
			\overline{\rank}((k+1,1,k,1^{2s+1});q)&=[k+1]_q\cdot \overline{\rank}((k+2,1^{2s+1});q).\tag{Id 0.2}
		\end{align*}
	\end{example}
	\begin{figure}[ht]
		\centering
		\begin{tikzpicture}[scale=.3]
			\fill(-6, 0) circle(.2);
			\fill (-5.5, 1)circle(.2);
			\fill(-5, 2) circle(.2);
			\fill(-4,4) circle(.2);  
			\fill(-4.5,3) circle(.2);
			\fill(-3,2) circle(.2);
			\fill(-2,0) circle(.2);
			\fill(-1,2) circle(.2); 
			\fill(0,0) circle(.2); 
			\fill(1,2) circle(.2); 
			\fill(2,0) circle(.2);
			\fill(3,2) circle(.2); 
			\fill(4,0) circle(.2); 
			\fill(5,2) circle(.2); 
			\node (0) at (-6, 0) {};
			\node (1) at (-5.5, 1) {};
			\node (2) at (-5, 2) {};
			\node (3) at (-4, 4) {};
			\node (4) at (-4.5, 3) {};
			\node (5) at (-3, 2) {};
			\node (6) at (-2, 0) {};
			\node (7) at (-1, 2) {};
			\node (8) at (0, 0) {};
			\node (9) at (1, 2) {};
			\node (10) at (2, 0) {};
			\node (11) at (3, 2) {};
			\node (12) at (4, 0) {};
			\node (13) at (5, 2) {};
			\draw (0.center) to (3.center);
			\draw (3.center) to (6.center);
			\draw (6.center) to (7.center);
			\draw (7.center) to (8.center);
			\draw (8.center) to (9.center);
			\draw (9.center) to (10.center);
			\draw (10.center) to (11.center);
			\draw (11.center) to (12.center);
			\draw (12.center) to (13.center);
			\draw[ultra thick, red,dotted,->] (5,2)--(6.5,.5);
			\draw[ultra thick, red,dotted,->] (-7.5,1.5)--(-6,0);
		\end{tikzpicture}\raisebox{.5cm}{$\qquad \longleftrightarrow$}\begin{tikzpicture}[scale=.3]
			\fill(-6, 0) circle(.2);
			\fill(-1.5,2) circle(.2);
			\fill(-2,1) circle(.2);
			\fill(-2.5,0) circle(.2);
			\fill(-1,3) circle(.2); 
			\fill(0,0) circle(.2); 
			\fill(1,2) circle(.2); 
			\fill(2,0) circle(.2);
			\fill(3,2) circle(.2); 
			\fill(4,0) circle(.2); 
			\fill(5,2) circle(.2); 
			\fill(-6, 1) circle(.2);
			\fill (-6, 2)circle(.2);
			\fill (-6,3)circle(.2);
			\node (7) at (-1, 3) {};
			\node (8) at (0, 0) {};
			\node (9) at (1, 2) {};
			\node (10) at (2, 0) {};
			\node (11) at (3, 2) {};
			\node (12) at (4, 0) {};
			\node (13) at (5, 2) {};
			\node (14) at (-1.5, 2) {};
			\node (15) at (-2, 1) {};
			\node (16) at (-2.5, 0) {};
			\node (17) at (-9.75, 7) {};
			\draw (-6,0)--(-6,3);
			\draw (7.center) to (8.center);
			\draw (8.center) to (9.center);
			\draw (9.center) to (10.center);
			\draw (10.center) to (11.center);
			\draw (11.center) to (12.center);
			\draw (12.center) to (13.center);
			\draw (7.center) to (16.center);
			\draw[ultra thick, red,dotted,->] (5,2)--(6.5,.5);
			\draw[ultra thick, red,dotted,->] (-4,1.5)--(-2.5,0);
		\end{tikzpicture}
		
		\caption{Illustration of (Id 0.1) at $s=4$, $k=3$}
		\label{fig:example1}
	\end{figure}

	We will see next that the identities described are actually part of a larger family of rank polynomial identities concerning palindromic compositions. For this purpose, we extend the notation $\clmat_q(\alpha):=(U_0)^{u_1}(D_0)^{d_1}(U_0)^{u_2}(D_0)^{d_2}\cdots(U_0)^{u_s}(D_0)^{d_s}$ from Equation~\ref{eq:circfen2} to include the cases where $u_1$, $d_s$ or both may be zero. In the language of fences, this means that we include the cases of starting with a down step as well. See Section~\ref{sec:prelim} for a discussion on the corresponding rank polynomials.
	
	\begin{lemma} \label{lem:palin}Let $\rho=( \rho_1,\rho_2,\ldots,\rho_s)$ be a palindromic composition (A composition satisfying $\rho_1=\rho_s$, $\rho_2=\rho_{s-1}$, $\ldots$ etc.).
	Consider the following maps on $2\times2$ matrices:
	\begin{align*}
	    \Psi^+(X)&:=\operatorname{tr}\left(\begin{bmatrix}1& -1-q\\1&-1
	    \end{bmatrix}\cdot \begin{bmatrix}x_{11}& x_{12}\\x_{21}&x_{22}
	    \end{bmatrix}\right)=x_{11}+x_{12}-(1+q)x_{21}-x_{22}.\\
	    	    \Psi^-(X)&:=\operatorname{tr}\left(\begin{bmatrix}1& -1-q^2\\1-q&-1
	    \end{bmatrix}\cdot \begin{bmatrix}x_{11}& x_{12}\\x_{21}&x_{22}
	    \end{bmatrix}\right)=x_{11}+(1-q)x_{12}-(1+q^2)x_{21}-x_{22}.
	\end{align*}
	For $\rho$ with an even number of parts we have:
	\begin{align}
	    \Psi^+(\clmat_q(\rho))=    \Psi^+(\clmat_q(0,\rho,0))=0, \label{eq:palin+}
	\end{align}
	For $\rho$ with an odd number of parts we have:
		\begin{align}
	    \Psi^-(\clmat_q(0,\rho))=    \Psi^+(\clmat_q(\rho,0))=0. \label{eq:palin-}
	\end{align}
	\end{lemma}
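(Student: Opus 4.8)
The plan is to read $\Psi^{+}$ and $\Psi^{-}$ as traces against the two fixed matrices $A^{+},A^{-}$ occurring in their definitions, so that $\Psi^{\pm}(X)=\operatorname{tr}(A^{\pm}X)$ with $\operatorname{tr}A^{+}=\operatorname{tr}A^{-}=0$, and then to slide $A^{\pm}$ through a word in $U_0,D_0$. The one substantive input is a pair of \emph{sandwiching identities}, each a direct $2\times2$ computation:
\[
U_0A^{+}D_0=D_0A^{+}U_0=qA^{+},\qquad U_0A^{-}U_0=D_0A^{-}D_0=qA^{-}.
\]
Rewritten as $A^{+}U_0=qD_0^{-1}A^{+}$, $A^{+}D_0=qU_0^{-1}A^{+}$, $A^{-}U_0=qU_0^{-1}A^{-}$, $A^{-}D_0=qD_0^{-1}A^{-}$, they say that sliding $A^{+}$ (resp.\ $A^{-}$) from the left past a word inverts every letter — and, for $A^{+}$ only, interchanges $U_0$ and $D_0$ — while producing a global factor $q^{m}$, $m$ being the total exponent of the word. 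We also note $\det U_0=\det D_0=q$, so $\det\clmat_q(\alpha)=q^{|\alpha|}$ for every $\alpha$ (zero parts contribute trivially).

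Let $\rho$ now be palindromic. If $\rho$ has an even number $2t$ of parts, then $\clmat_q(\rho)=U_0^{\rho_1}D_0^{\rho_2}\cdots U_0^{\rho_{2t-1}}D_0^{\rho_{2t}}$ strictly alternates, so the fitting slider is $A^{+}$, and sliding it through gives $A^{+}\clmat_q(\rho)=q^{|\rho|}\,W\,A^{+}$ with $W=D_0^{-\rho_1}U_0^{-\rho_2}\cdots D_0^{-\rho_{2t-1}}U_0^{-\rho_{2t}}$. The crux is that palindromicity makes $W=\clmat_q(\rho)^{-1}$: its inverse is $W^{-1}=U_0^{\rho_{2t}}D_0^{\rho_{2t-1}}\cdots U_0^{\rho_2}D_0^{\rho_1}$, which coincides with $\clmat_q(\rho)$ letter for letter once we substitute $\rho_i=\rho_{2t+1-i}$. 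The zero-padded matrix $\clmat_q(0,\rho,0)=D_0^{\rho_1}U_0^{\rho_2}\cdots D_0^{\rho_{2t-1}}U_0^{\rho_{2t}}$ is handled identically. If $\rho$ has an odd number $2t+1$ of parts, then $\clmat_q(\rho,0)=U_0^{\rho_1}D_0^{\rho_2}\cdots U_0^{\rho_{2t+1}}$ begins and ends with $U_0$, while $\clmat_q(0,\rho)=D_0^{\rho_1}U_0^{\rho_2}\cdots D_0^{\rho_{2t+1}}$ begins and ends with $D_0$; for words with a repeated endpoint the fitting slider is $A^{-}$, which does not interchange $U_0$ and $D_0$, and the same palindrome bookkeeping yields $A^{-}\clmat_q(\,\cdot\,)=q^{|\rho|}\clmat_q(\,\cdot\,)^{-1}A^{-}$.

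To conclude, take traces. For invertible $2\times2$ $M$ one has $M^{-1}=(\det M)^{-1}\bigl(\operatorname{tr}(M)I-M\bigr)$, hence $\operatorname{tr}(AM^{-1})=-\operatorname{tr}(AM)/\det M$ whenever $\operatorname{tr}A=0$. With $A=A^{\pm}$, $M=\clmat_q(\,\cdot\,)$ and $\det\clmat_q(\,\cdot\,)=q^{|\rho|}$, the commutation relation becomes $\Psi(\clmat_q(\,\cdot\,))=q^{|\rho|}\operatorname{tr}(A\,\clmat_q(\,\cdot\,)^{-1})=-\Psi(\clmat_q(\,\cdot\,))$, with $\Psi=\Psi^{+}$ in the even-length case and $\Psi=\Psi^{-}$ in the odd-length case; working over $\ZZ[q]$ this forces the value to be $0$, which establishes the identities of the lemma.

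The computation is essentially mechanical; the one point needing care is the twofold matching — choosing the slider that fits the word's alternation pattern ($A^{+}$ for the strictly alternating even-length words, $A^{-}$ for the odd-length words that repeat a generator at the two ends) and checking, in each of the four cases, that after sliding the palindrome symmetry reproduces the inverse word \emph{on the nose} rather than merely up to conjugacy. No induction is required.
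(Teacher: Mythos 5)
Your proof is correct, and its computational core --- the sandwich identities $U_0A^{+}D_0=D_0A^{+}U_0=qA^{+}$ and $U_0A^{-}U_0=D_0A^{-}D_0=qA^{-}$ --- is exactly the paper's key lemma in disguise: by non-degeneracy of the trace pairing these are equivalent to the relations $\Psi^{+}(U_0XD_0)=\Psi^{+}(D_0XU_0)=q\,\Psi^{+}(X)$ and $\Psi^{-}(U_0XU_0)=\Psi^{-}(D_0XD_0)=q\,\Psi^{-}(X)$ that the paper verifies. Where you diverge is in how palindromicity is exploited. The paper builds $\clmat_q(\rho)$ from the centre outward, sandwiching the identity (or a single $U_0$ or $D_0$ in the odd case) and inducting, so everything reduces to the base cases $\Psi^{+}(I)=\Psi^{-}(I)=\Psi^{-}(U_0)=\Psi^{-}(D_0)=0$. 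You instead slide $A^{\pm}$ through the whole word from the left, use the fact that a palindrome equals its own reversal to recognize the emerging word as $\clmat_q(\cdot)^{-1}$, and close with the $2\times2$ identity $\operatorname{tr}(AM^{-1})=-\operatorname{tr}(AM)/\det M$ for traceless $A$, forcing $\Psi=-\Psi$ and hence $\Psi=0$ over $\ZZ[q,q^{-1}]$. Your route costs slightly more (the determinants $\det U_0=\det D_0=q$ and Cayley--Hamilton) but avoids induction and makes the mechanism transparent; both arguments are complete. One remark: as printed, the second display of the lemma reads $\Psi^{+}(\clmat_q(\rho,0))=0$, which is false as stated (for $\rho=(1)$ one has $\clmat_q(\rho,0)=U_0$ and $\Psi^{+}(U_0)=q$); the intended operator there is $\Psi^{-}$, as both the paper's own proof and your choice of the slider $A^{-}$ for words with a repeated end-letter confirm, so you have in fact proved the corrected statement.
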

	\begin{proof}
	 The equalities may be proved by induction. Equation~\ref{eq:palin+} is quite straightforward as the matrix $\clmat_q(\rho)$ for any palindromic composition $\rho$ can be formed by starting with identity and multiplying by $U_0$ and $D_0$ from the sides. It is quite straightforward to verify via direct calculation that $\Psi^+(I)=0$ and for all $2\times2 $ matrices $X$ one has 
	 \begin{align*}
	 \Psi^+(U_0\cdot X\cdot D_0)=\Psi^+(D_0\cdot X\cdot U_0)=q\Psi^+(X)
	 .\end{align*} The situation of Equation~\ref{eq:palin-} is only slightly more complicated as the middle part of an odd palindromic composition can be odd or even. The result follows as  $\Psi^-(I)=\Psi^-(U_0)=\Psi^-(D_0)=0$ and for all $X$,
	 \begin{align*}
	 \Psi^-(U_0\cdot X\cdot U_0)=\Psi^-(D_0\cdot X\cdot D_0)=q\Psi^-(X).
	 \end{align*}  
	\end{proof}
	
	\begin{thm}\label{thm:3} Let $\rho$ be a palindromic composition with an even number of parts. For $k\geq 1$, $r\geq1$ we have:
		\begin{align}
			\overline{\rank}((1,k,r+1,\rho,r);q)&= [k+1]_q \cdot \overline{\rank}((r+2,\rho,r);q), \tag{Id 1}\\
			\overline{\rank}((k,1,k+r,\rho,r);q)&=[k+1]_q \cdot
			\overline{\rank}((k+r+1,\rho,r);q)\tag{Id 2}.
		\end{align}
	\end{thm}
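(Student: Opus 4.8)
The plan is to translate both identities into trace identities for products of the generators $U_0$ and $D_0$ via Proposition~\ref{prop:circfence}, to factor out a common central matrix depending only on $\rho$, and then to reduce everything to a scalar matrix identity that feeds directly into Lemma~\ref{lem:palin}.

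First I would fix the palindromic composition $\rho=(\rho_1,\ldots,\rho_{2t})$ and set $M:=\clmat_q(0,\rho,0)=D_0^{\rho_1}U_0^{\rho_2}D_0^{\rho_3}\cdots U_0^{\rho_{2t}}$, using the extended convention for $\clmat_q$ (allowing vanishing end parts) introduced just before Lemma~\ref{lem:palin}. Writing each of the four compositions occurring in (Id 1) and (Id 2) as an alternating string of $U_0$- and $D_0$-blocks and keeping track of the parity of the segments, one checks the block decompositions
\begin{align*}
\clmat_q((1,k,r+1,\rho,r))&=U_0\,D_0^{k}\,U_0^{r+1}\,M\,D_0^{r}, & \clmat_q((r+2,\rho,r))&=U_0^{r+2}\,M\,D_0^{r},\\
\clmat_q((k,1,k+r,\rho,r))&=U_0^{k}\,D_0\,U_0^{k+r}\,M\,D_0^{r}, & \clmat_q((k+r+1,\rho,r))&=U_0^{k+r+1}\,M\,D_0^{r}.
\end{align*}
By Proposition~\ref{prop:circfence} and cyclicity of the trace, (Id 1) is equivalent to $\operatorname{tr}(D_0^{r}AM)=0$ and (Id 2) to $\operatorname{tr}(D_0^{r}BM)=0$, where $A:=U_0D_0^{k}U_0^{r+1}-[k+1]_qU_0^{r+2}$ and $B:=U_0^{k}D_0U_0^{k+r}-[k+1]_qU_0^{k+r+1}$.

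The heart of the argument is then the purely matrix-theoretic claim that
\begin{align*}
D_0^{r}A=[k]_q\,q^{r+1}\,E\qquad\text{and}\qquad D_0^{r}B=q^{k+r}\,E,
\end{align*}
where $E:=\left[\begin{smallmatrix}1&-1-q\\1&-1\end{smallmatrix}\right]$ is the matrix defining $\Psi^+$ in Lemma~\ref{lem:palin}, i.e., $\Psi^+(X)=\operatorname{tr}(EX)$. Both identities follow from a direct $2\times2$ computation using the closed forms $U_0^{n}=\left[\begin{smallmatrix}q^{n}&[n]_q\\0&1\end{smallmatrix}\right]$ and $D_0^{n}=\left[\begin{smallmatrix}[n+1]_q&-q[n]_q\\ {}[n]_q&-q[n-1]_q\end{smallmatrix}\right]$ together with the identity $[m+1]_q-q[m]_q=1$. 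For $A$ it is cleanest to first invoke the eigenvalue relation $D_0^{k}=[k]_qD_0-q[k-1]_qI$ (the eigenvalues of $D_0$ are $1$ and $q$) and $q[k-1]_q+[k+1]_q=(1+q)[k]_q$, which pulls the factor $[k]_q$ out of $A$ and reduces the claim to the case $k=1$; for $B$ one uses $U_0^{k}D_0=R_q^{k+2}S_q=\left[\begin{smallmatrix}[k+2]_q&-q^{k+1}\\1&0\end{smallmatrix}\right]$ and the identity $[k+2]_q[k+r]_q-[k+1]_q[k+r+1]_q=q^{k+1}[r-1]_q$.

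Finally, since $\rho$ is palindromic with an even number of parts, Lemma~\ref{lem:palin} gives $\operatorname{tr}(EM)=\Psi^+(\clmat_q(0,\rho,0))=0$, hence $\operatorname{tr}(D_0^{r}AM)=[k]_qq^{r+1}\operatorname{tr}(EM)=0$ and $\operatorname{tr}(D_0^{r}BM)=q^{k+r}\operatorname{tr}(EM)=0$, which are precisely (Id 1) and (Id 2). I expect the main obstacle to be this middle step: organizing the $2\times2$ products and the accompanying $q$-integer manipulations so that each output is visibly a scalar multiple of the single matrix $E$, and checking that the block decompositions and the $q$-integer identities remain valid in the boundary case $r=1$ (where $[r-1]_q=0$) and for every $k\geq1$. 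Everything else is bookkeeping with Proposition~\ref{prop:circfence} and Lemma~\ref{lem:palin}.
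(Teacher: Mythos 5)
Your proof is correct and takes essentially the same route as the paper: both reduce (Id 1) and (Id 2) to trace identities via Proposition~\ref{prop:circfence} and cyclicity, show the resulting ``difference'' matrix is a scalar multiple of the matrix $E$ defining $\Psi^+$, and conclude from Lemma~\ref{lem:palin}. The only (cosmetic) difference is that the paper first absorbs the parts $r$ into the palindrome via $(r,\rho,r)$ so its residual matrix computation is $r$-independent, whereas you carry $D_0^{r}$ through explicitly; your identities $D_0^{r}A=[k]_qq^{r+1}E$ and $D_0^{r}B=q^{k+r}E$ check out.
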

	\begin{proof} We will prove the statement by reducing both identities to $\Psi^+(\clmat_q(\rho))=0$ from Lemma~\ref{lem:palin}. Note that $(r,\rho,r)$ is palindromic if and only if $\rho$ is palindromic. So, if we denote the composition formed by adding $n$ to the \emph{leftmost} part of $\rho$ by $\rho+n$, it suffices to show the following hold for any palindromic composition $\rho$ with an even number of parts:
		\begin{align}
			\overline{\rank}((1,k,\rho+1);q)&= [k+1]_q \cdot \overline{\rank}((\rho+2);q) \tag{Id 1$'$},\\
			\overline{\rank}((k,1,\rho+k);q)&=[k+1]_q \cdot
			\overline{\rank}((\rho+k+1);q)\tag{Id 2$'$}.
		\end{align}
 By Equation~\ref{eq:circfen2}, (Id $1'$) can be expressed in terms of $\operatorname{tr}$ as:
\begin{align*}
   &\operatorname{tr}(U_0\cdot (D _0)^k\cdot U_0 \cdot \clmat_q(\rho) )=[k+1]_q\operatorname{tr}(U_0^2\cdot \clmat_q(\rho)),\\
    \Leftrightarrow &\operatorname{tr}(U_0\cdot ((D_0)^k-[k+1]_q)\cdot U_0 \cdot \clmat_q(\rho) )=0.	    
	\end{align*}
This expression is actually independent of $k$ as we have:
		\begin{align*}
(D_0)^k-[k+1]_q=\begin{bmatrix}
  0& -q[k]_q\\ [k]_q& -q[k-1]_q-[k+1]_q 
\end{bmatrix} =\begin{bmatrix}
    0 & -q[k]_q\\ [k]_q& -(1+q)[k]_q 
\end{bmatrix}=[k]_q \begin{bmatrix}
    0 & -q\\ 1 & -1-q
\end{bmatrix}.\end{align*}
This means $\clmat_q(\rho)$ satisfies (Id $1'$) if and only if:
\begin{align}
    \operatorname{tr}\left( \begin{bmatrix}
        q&1 \\0 &1
    \end{bmatrix}\begin{bmatrix}
    0 & -q\\ 1 & -1-q
\end{bmatrix}\begin{bmatrix}
        q&1 \\0 &1
    \end{bmatrix}\clmat_q(\rho)\right)=   q \Psi^+(\clmat_q(\rho))=0.\label{eq:midpalind}
\end{align}
Similarly, we can rewrite (Id  2$'$) in terms of trace of rank matrices: 
\begin{align*}
    &\operatorname{tr}(U_0^k\cdot (D _0) \cdot (U_0)^k \cdot \clmat_q(\rho) )=[k+1]_q\operatorname{tr}(U_0^{k+1}\cdot \clmat_q(\rho)) \\
   \Leftrightarrow  & \operatorname{tr}(U_0\cdot (U_0^{k-1}\cdot D_0-[k+1]_q) \cdot (U_0)^{(k-1)}\cdot U_0 \cdot \clmat_q(\rho))=0.
\end{align*}

Again, this expression ends up not being dependent on the value of $k$.
	\begin{align*}
	   (U_0^{k-1}\cdot D_0-[k+1]_q) \cdot (U_0)^{k-1})=q^{k-1}\begin{bmatrix}
	       0 & -q \\ 1 & -1 -q
	   \end{bmatrix},
	\end{align*}
	which means by Equation~\ref{eq:midpalind}, the condition to satisfy (Id 2$'$) is also that $\Psi^+(\clmat_q(\rho))=0$.
		\end{proof}

\begin{soru} Equation~\ref{eq:palin+} for palindromic compositions of even length gives rise to the rank polynomial identities listed above. Are there similar consequences to Equation~\ref{eq:palin-}? Can we find rank polynomial identities for palindromic compositions of odd length?
\end{soru}			

	\begin{thm} For $k\geq 2$, $s\geq 0$ we have:
		\begin{align*}
		\overline{\rank}((2k,k^{2s+1});q)&=(1+q^k)\cdot \overline{\rank}((k+1,k-1,k^{2s});q).\tag{Id 3}\\
		\end{align*}
	\end{thm}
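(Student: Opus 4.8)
The plan is to rewrite (Id 3) as a trace identity for products of $U_0$ and $D_0$, factor out the common palindromic block, and finish with a linear recurrence coming from Cayley--Hamilton.

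First I would invoke Proposition~\ref{prop:circfence}. Put $N:=(U_0)^{k}(D_0)^{k}=\clmat_q(k,k)$, so that $\clmat_q(k^{2s})=N^{s}$. Since the final $2s$ parts of each composition contribute exactly the block $N^{s}$,
\[
\overline{\rank}((2k,k^{2s+1});q)=\operatorname{tr}\!\big((U_0)^{2k}(D_0)^{k}N^{s}\big),\qquad
\overline{\rank}((k+1,k-1,k^{2s});q)=\operatorname{tr}\!\big((U_0)^{k+1}(D_0)^{k-1}N^{s}\big),
\]
so (Id 3) is equivalent to $\operatorname{tr}(A\,N^{s})=0$ for all $s\ge0$, where
\[
A:=(U_0)^{2k}(D_0)^{k}-(1+q^{k})(U_0)^{k+1}(D_0)^{k-1}=(U_0)^{k+1}\!\left(\rmm_q(\overrightarrow{U_{k-1}})-(1+q^{k})I\right)(D_0)^{k-1},
\]
using $\rmm_q(\overrightarrow{U_{k-1}})=(U_0)^{k-1}D_0=\left[\begin{smallmatrix}[k+1]_q&-q^{k}\\1&0\end{smallmatrix}\right]$.

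Next, $N$ is a $2\times2$ matrix with $\det N=(\det U_0)^{k}(\det D_0)^{k}=q^{2k}$, so Cayley--Hamilton gives $N^{n+1}=\operatorname{tr}(N)\,N^{n}-q^{2k}N^{n-1}$ for $n\ge1$. Left-multiplying by $A$ and taking traces, the sequence $f(s):=\operatorname{tr}(A\,N^{s})$ satisfies $f(s+1)=\operatorname{tr}(N)\,f(s)-q^{2k}f(s-1)$ for $s\ge1$; hence it suffices to check the two base cases $f(0)=\operatorname{tr}(A)=0$ and $f(1)=\operatorname{tr}(A\,N)=0$, and the general statement then follows by induction on $s$. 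The case $f(0)=0$ is the $s=0$ instance $\operatorname{tr}\!\big((U_0)^{2k}(D_0)^{k}\big)=(1+q^{k})\operatorname{tr}\!\big((U_0)^{k+1}(D_0)^{k-1}\big)$; substituting $(U_0)^{m}=\left[\begin{smallmatrix}q^{m}&[m]_q\\0&1\end{smallmatrix}\right]$, $(D_0)^{m}=\left[\begin{smallmatrix}[m+1]_q&-q[m]_q\\ [m]_q&-q[m-1]_q\end{smallmatrix}\right]$ and simplifying with $[2k]_q=(1+q^{k})[k]_q$ and $[k+1]_q[k-1]_q=[k]_q^{2}-q^{k-1}$, it collapses to $q^{2k}(1-q)[k]_q=q^{2k}-q^{3k}$, which holds since $(1-q)[k]_q=1-q^{k}$. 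The case $f(1)=0$ is the $s=1$ instance $\overline{\rank}((2k,k^{3});q)=(1+q^{k})\overline{\rank}((k+1,k-1,k^{2});q)$, verified the same way by multiplying out the longer $2\times2$ product that now includes the extra factor $N=(U_0)^{k}(D_0)^{k}$.

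The main obstacle is, first, recognising that the growing block $k^{2s}$ should be packaged as the matrix power $N^{s}$, so that Cayley--Hamilton reduces the problem to two base cases, and second, the $s=1$ base computation, which is elementary but produces somewhat bulky polynomials in $q$. I note that, unlike in Theorem~\ref{thm:3}, one cannot finish by a direct appeal to Lemma~\ref{lem:palin}: the matrix $A$ is not a scalar multiple of $\left[\begin{smallmatrix}1&-1-q\\1&-1\end{smallmatrix}\right]$ when $k\ge2$, so $\operatorname{tr}(A\,\clmat_q(k^{2s}))=0$ does not follow merely from $\Psi^{+}(\clmat_q(k^{2s}))=0$, and the two-step recurrence is the natural way around this.
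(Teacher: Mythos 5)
Your proof is correct and follows essentially the same route as the paper: both rewrite (Id~3) as $\operatorname{tr}\big((U_0^{2k}D_0^{k}-(1+q^{k})U_0^{k+1}D_0^{k-1})\cdot (U_0^{k}D_0^{k})^{s}\big)=0$, reduce to the base cases $s=0$ and $s=1$ via the degree-two characteristic polynomial of $U_0^{k}D_0^{k}$, and verify those by direct computation. The only differences are notational (your $N$ is the paper's $A$) and that you spell out the Cayley--Hamilton recurrence and the determinant $q^{2k}$ a bit more explicitly.
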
 
	\begin{proof} Set $A:=U_0^k D_0^k$. By Equation~\ref{eq:circfen2}, we can rewrite (Id 3) in terms of traces:
	\begin{align*}
	   & \operatorname{tr}(U_0^{2k} D_0^k  A^s)=(1+q^k) \operatorname{tr}(U_0^{k+1} D_0^{k-1} A^s)\\
	   \Leftrightarrow & \operatorname{tr}((U_0^{2k} D_0^k-(1+q^k)U_0^{k+1} D_0^{k-1})\cdot A^s)=0.
	\end{align*}
It is in fact enough to prove the identity for $A^0$ and $A^1$ as the characteristic polynomial of $A$ has degree $2$. At $s=0$ we have:
\begin{align*}
    U_0^{2k} D_0^k-(1+q^k)U_0^{k+1} D_0^{k-1}=\displaystyle \frac{q^{k-1}}{q-1}\begin{bmatrix}
        -1+q+q^2-q^k&1-q^2-q^3+q^k+q^{k+2}-q^{k+3}\\
1+2q-q^k &1-q-q^2+q^k.
    \end{bmatrix}
\end{align*} The trace of the above matrix is $0$. One can do a similar calculation to show the trace is zero for the $s=1$ case as well.
	\end{proof}
	
\begin{soru} It might be interesting to see if (Id 3) above generalizes to an identity on palindromic compositions.
\end{soru}

	\section{A Combinatorial Model for $q$-deformed Markov Numbers}\label{sec:cohn}
 In the introduction, we have mentioned that all positive integer triples solving the Markov Diophantine equation $x^2+y^2+z^2=3xyz$ can be reached from the inital solution $(1,1,1)$ by operations of the type $(a,b,c)\rightarrow(a,b,(a^2+b^2)/c)$. Equivalently, one can make use of Christoffel words. Starting with the triple $(a,ab,b)$, all the triples we can obtain by the operations:
	\begin{align*}
		& (u,uv,v) \longrightarrow (u,uuv,uv)\\
		& (u,uv,v) \longrightarrow (uv,uvv,v)
	\end{align*} are called \definition{Christoffel triples}, and the words $w(a,b)$ that occur as coordinates in these triples are called \definition{Christoffel words}. Let $W_C(a,b)$ denote the set of Christoffel words. Given a Christoffel word $w(a,b)$, the corresponding Markov number can be obtained by calculating the matrix $w(A,B)$, and then dividing the trace by three, where $A$ and $B$ are the \definition{Cohn matrices} defined as:
 	\begin{align*}\displaystyle
		&A:=\begin{bmatrix}
			2& 1\\ 1 & 1
		\end{bmatrix} \qquad \qquad B:=\begin{bmatrix}
			5 &2\\
			2 &1\\
		\end{bmatrix} 
	\end{align*}

	In \cite{leclere}, the following $q$-transformation is given for the Cohn matrices:
	\begin{align*}\displaystyle
		&[A]_q:=\begin{bmatrix}
			q+q^2& 1\\ q & 1
		\end{bmatrix} \qquad \qquad [B]_q:=\begin{bmatrix}
			q+2q^2+q^3+q^4 &1+q\\
			q+q^2 &1\\
		\end{bmatrix} 
	\end{align*}
	
	A similar deformation involving negative powers of $q$ is given in \cite{kogiso} where  $[A]_q$ is replaced with $q^{-1}[A]_q$ and  $[B]_q$ is replaced with $q^{-2}[B]_q$. The two deformations are equivalent up to a multiplication with a power of $q$, we are following the \cite{leclere} version that takes $q^0$ as the smallest term for ease of notation.
 
 For a given $w \in W_C(a,b)$, we will denote the length of $w$ by $\ell(w)=\ell_a(w)+\ell_b(w)$ where $\ell_a(w)$ (resp. $\ell_b(w)$) stands for the number of times $a$ (resp. $b$) occurs in $w$. If $w(a,b)\in W_C$ is not equal to the trivial words $a$ and $b$, then we also define $\hat{w}(a,b)$ to be the palindromic word satisfying $w(a,b)=a\,\hat{w}(a,b)\,b$. Finally, we set $w_q(A,B):=w({[A]_q},{[B]_q})$ for ease of notation.
	%%%%%%%%%%%%%%%%%%%%%%%%%%%%%%%%%%insert tree
	
	\begin{thm}[\cite{kogiso}, Theorem 2.6] For a Christoffel triple $(w,w\,w',w')$, $$(x,y,z):=([3]_q)^{-1}\cdot(\mathrm{tr}(w_q(A,B),\mathrm{tr}(w_q(A,B)w'_q(A,B)),\mathrm{tr}(w'_q(A,B))) $$ satisfies the following $q$-deformed Markov equation :
		\begin{equation}
			\left(\frac{x}{q^{c_x}}\right)^2+\left(\frac{y}{q^{c_y}}\right)^2+\left(\frac{z}{q^{c_z}}\right)^2 + \frac{(q-1)^3}{q^3} =[3]_q \frac{xyz}{q^{c_x+c_y+c_z}}.
		\end{equation}
		where $c_x:=\ell(w)+\ell_b(w)$, $c_z:=\ell(w')+\ell_b(w')$ and $c_y:=c_x+c_z$.
	\end{thm}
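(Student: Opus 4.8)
The plan is to reduce the $q$-deformed Markov equation to a single identity for the trace of a commutator of $2\times2$ matrices, mirroring Cohn's classical argument for the ordinary Markov equation but carrying the determinants along, since $[A]_q$ and $[B]_q$ lie in $\operatorname{GL}_2$ rather than $\operatorname{SL}_2$. First I would record the determinants: a direct check gives $\det[A]_q=q^2$ and $\det[B]_q=q^4$. Since the determinant is multiplicative and $\ell_a,\ell_b$ add under concatenation, any $u\in W_C(a,b)$ has $\det u_q(A,B)=q^{2\ell_a(u)+4\ell_b(u)}=q^{2(\ell(u)+\ell_b(u))}$; in the notation of the statement, $\det w_q(A,B)=q^{2c_x}$, $\det w'_q(A,B)=q^{2c_z}$ and $\det\bigl(w_q(A,B)\,w'_q(A,B)\bigr)=q^{2c_y}$, and moreover $c_x+c_y+c_z=2c_y$.

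Next I would establish a $\operatorname{GL}_2$ version of the Fricke identity. Using $M^2=\operatorname{tr}(M)M-\det(M)I$ (so $M^{-1}=\det(M)^{-1}(\operatorname{tr}(M)I-M)$), a short expansion of $XYX^{-1}Y^{-1}=(\det X\det Y)^{-1}\,XY(\operatorname{tr}(X)I-X)(\operatorname{tr}(Y)I-Y)$ — in which the only non-immediate piece is $\operatorname{tr}(XYX)=\operatorname{tr}(X^2Y)=\operatorname{tr}(X)\operatorname{tr}(XY)-\det(X)\operatorname{tr}(Y)$ — yields, with $a=\operatorname{tr}X$, $b=\operatorname{tr}Y$, $c=\operatorname{tr}(XY)$, $\alpha=\det X$, $\beta=\det Y$:
\[
\operatorname{tr}(XYX^{-1}Y^{-1})=\frac{a^2}{\alpha}+\frac{b^2}{\beta}+\frac{c^2}{\alpha\beta}-\frac{abc}{\alpha\beta}-2 .
\]
Specializing $X=w_q(A,B)$, $Y=w'_q(A,B)$, so that $a=[3]_q x$, $b=[3]_q z$, $c=[3]_q y$, $\alpha=q^{2c_x}$, $\beta=q^{2c_z}$, $\alpha\beta=q^{2c_y}$, and dividing by $[3]_q^2$ converts this to
\[
\frac{x^2}{q^{2c_x}}+\frac{y^2}{q^{2c_y}}+\frac{z^2}{q^{2c_z}}-[3]_q\frac{xyz}{q^{c_x+c_y+c_z}}=\frac{\operatorname{tr}(XYX^{-1}Y^{-1})+2}{[3]_q^2}.
\]
Comparing with the asserted equation, the theorem becomes equivalent to the single claim that $\operatorname{tr}\bigl(w_q(A,B)\,w'_q(A,B)\,w_q(A,B)^{-1}\,w'_q(A,B)^{-1}\bigr)=-2-[3]_q^2\dfrac{(q-1)^3}{q^3}$ for every Christoffel triple $(w,ww',w')$.

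To prove that, I would show the commutator trace is constant on the Markov tree and then evaluate it at the root. A Christoffel triple has the shape $(P,PQ,Q)$ with $P,Q\in\operatorname{GL}_2$, and the two generating operations send it to $(P,\,P\!\cdot\!PQ,\,PQ)$ and to $(PQ,\,PQ\!\cdot\!Q,\,Q)$; these replace the outer pair $(P,Q)$ by $(P,PQ)$ and by $(PQ,Q)$ respectively, and since $[P,PQ]=P\,[P,Q]\,P^{-1}$ and $[PQ,Q]=[P,Q]$, the trace of the commutator of the outer pair is unchanged along every edge. Hence it equals its value at $(a,ab,b)$, namely $\operatorname{tr}\bigl([[A]_q,[B]_q]\bigr)$, and it only remains to compute this explicitly — from $\operatorname{tr}[A]_q=[3]_q$, $\operatorname{tr}[B]_q=1+q+2q^2+q^3+q^4$, $\operatorname{tr}([A]_q[B]_q)=1+2q+3q^2+3q^3+3q^4+2q^5+q^6$ together with the determinants (or by multiplying the three matrices out directly) — and check that it equals $-2-[3]_q^2(q-1)^3/q^3$, which at $q=1$ reduces to the classical $\operatorname{tr}([A,B])=-2$.

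The conceptual core is the short invariance argument of the third step. The fiddly points will be pinning down the determinant-decorated Fricke identity with the correct signs and the $\operatorname{tr}(X^2Y)$ term, and carrying out the base-case simplification: the right-hand side of the Fricke formula is a priori a Laurent polynomial spanning degrees $q^{-6}$ through $q^6$, and one must see the cancellations that collapse it to the compact constant $-2-[3]_q^2(q-1)^3/q^3$ — which is really where the mysterious factor $(q-1)^3/q^3$ in the statement comes from.
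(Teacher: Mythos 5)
The paper itself offers no proof of this theorem --- it is imported from \cite{kogiso} --- so there is no internal argument to compare against; I am judging your proposal on its own merits. Your framework is the standard and correct one, and every step you actually carry out checks out: $\det[A]_q=q^2$, $\det[B]_q=q^4$, hence $\det w_q(A,B)=q^{2c_x}$; the $\mathrm{GL}_2$ Fricke identity
$\operatorname{tr}(XYX^{-1}Y^{-1})=a^2/\alpha+b^2/\beta+c^2/(\alpha\beta)-abc/(\alpha\beta)-2$ is right; the reduction of the $q$-Markov equation to a single commutator-trace identity is right; and the invariance of $\operatorname{tr}[P,Q]$ along the Markov tree via $[P,PQ]=P[P,Q]P^{-1}$ and $[PQ,Q]=[P,Q]$ is right.

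The gap sits exactly in the one computation you defer. Carrying it out (with $a=\operatorname{tr}[A]_q=[3]_q$, $b=\operatorname{tr}[B]_q=[3]_q(1+q^2)$, $c=\operatorname{tr}([A]_q[B]_q)=[3]_q[5]_q$, $\alpha=q^2$, $\beta=q^4$), the bracket $q^4+q^2(1+q^2)^2+[5]_q^2-[3]_q(1+q^2)[5]_q$ collapses to $-q^3(q-1)^2$, giving
\[
\operatorname{tr}\bigl([A]_q[B]_q[A]_q^{-1}[B]_q^{-1}\bigr)=-\bigl(q^3+q^{-3}\bigr)=-2-\frac{(q^3-1)^2}{q^3}=-2-[3]_q^2\,\frac{(q-1)^2}{q^3},
\]
which is \emph{not} the target $-2-[3]_q^2(q-1)^3/q^3$ your reduction requires (at $q=3$ these are $-730/27$ versus $-1406/27$). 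So the proof cannot be closed as written: what your argument actually establishes is the identity with defect $(q-1)^2/q^3$ in place of $(q-1)^3/q^3$. A direct test of the quoted equation on the root triple $(a,ab,b)$ --- where $x=1$, $y=[5]_q$, $z=1+q^2$ and $(c_x,c_y,c_z)=(1,3,2)$ --- confirms the same thing: at $q=3$ the quantity $[3]_q\,xyz/q^{c_x+c_y+c_z}-\bigl(x^2/q^{2c_x}+y^2/q^{2c_y}+z^2/q^{2c_z}\bigr)$ equals $4/27=(q-1)^2/q^3$, not $8/27$. The obstruction therefore lies in the exponent of $(q-1)$ in the statement as transcribed here (under the paper's own conventions for $[A]_q$, $[B]_q$, $w_q$ and $c_x$), not in your method; but the base case is precisely where that constant is decided, and you should not have left it unevaluated --- note that checking only at $q=1$ or $q=2$, where $(q-1)^2=(q-1)^3$, would not have detected the discrepancy.
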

	
	We will now give a combinatorial description for entries of the Cohn matrices using oriented posets. Consider the following posets:
	
	\begin{center}
		\begin{tikzpicture}[scale=.5]
			\node [] (0) at (-2, 2) {};
			\fill(-2,2) circle(.2);
			\node  (1) at (0, 0) {};
			\node  (2) at (5, 2) {};
			\node (3) at (7, 4) {};
			\node (4) at (9, 2) {};
			\node  (5) at (11, 0) {};
			\fill(0,0) circle(.2);
			\fill(5,2) circle(.2);
			\fill(7,4) circle(.2);
			\fill(9,2) circle(.2);
			\fill(11,0) circle(.2);
			\node at (-2, 2.7) {$x_L$};
			\node at (0, 0.7) {$x_R$};
			\node  at (5, 2.7) {$y_L$};
			\node  at (11, 0.7) {$y_R$};
			\node  at (-0.75, -1) {$\overrightarrow{P_A}$};
			\node  at (7.25, -1) {$\overrightarrow{P_B}$};
			\draw (1.center) to (0.center);
			\draw (2.center) to (3.center);
			\draw (3.center) to (5.center);
		\end{tikzpicture}
	\end{center}
	
	These deformations will help us view any Cohn matrix as a rank matrix of a fence poset, whose trace is given by a circular rank polynomial. For a given word $w(A,B)$, form a corresponding composition $\alpha(w)$ by replacing each $B$ with a pair of parts of size $2$ and each $A$ with a pair of parts of size $1$. For example, if $w(A,B)=AAAB$ we get $\alpha(w)=(1,1,1,1,1,1,2,2).$ 
	
	\begin{prop} The $q$-deformations $[A]_q$ and $[B]_q$ correspond to the dual rank matrices for the oriented posets $\overrightarrow{P_A}$ and $\overrightarrow{P_B}$ respectively. As a consequence, for any word $w(A,B)$ we have
		\begin{equation}
			\mathrm{tr}(w_q(A,B))=\overline{\rank}(\alpha(w)) \label{eq=cohntotrace}
		\end{equation}
	\end{prop}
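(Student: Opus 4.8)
Throughout write $U_0 = R_q$ and $D_0 = R_q^2 S_q$ as in Section~\ref{sec:dual}. The plan is to identify each Cohn matrix with the dual rank matrix of its block and then to show that, up to one fixed conjugation, multiplying these matrices along a word $w$ reproduces the circular-fence matrix $\clmat_q(\alpha(w))$ of Proposition~\ref{prop:circfence}. First, $\overrightarrow{P_A}$ is precisely the decreasing two-chain $\overrightarrow{D_1}$ (with $x_L$ its maximum and $x_R$ its minimum), so specializing the formula for $\drm_q(\overrightarrow{D_n})$ from Section~\ref{sec:dual} at $n=1$ yields exactly $[A]_q$; and $\overrightarrow{P_B}$ is the oriented fence poset $\overrightarrow{(1,2)}$ (first node $x_L$, last node $x_R$), so Proposition~\ref{prop:fence} gives $\drm_q(\overrightarrow{P_B}) = (U_0)^1 (D_0)^2 (U_0)^1 = U_0 D_0^2 U_0$, which a direct $2\times 2$ multiplication shows equals $[B]_q$.

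The heart of the matter is that both of these are conjugates of the corresponding elementary factor $U_0^{u} D_0^{d}$ appearing in Equation~\ref{eq:circfen2}: writing $E_A := U_0 D_0$ for the $(1,1)$-block and $E_B := U_0^2 D_0^2$ for the $(2,2)$-block, we have
\[ [A]_q = D_0 U_0 = U_0^{-1} E_A\, U_0, \qquad [B]_q = U_0 D_0^2 U_0 = U_0^{-1} E_B\, U_0. \]
Consequently, for any word $w = X_1 X_2 \cdots X_n$ in the alphabet $\{A, B\}$, the consecutive factors $U_0 U_0^{-1}$ in $w_q(A,B) = [X_1]_q [X_2]_q \cdots [X_n]_q$ telescope away and leave
\[ w_q(A,B) = U_0^{-1} \bigl( E_{X_1} E_{X_2} \cdots E_{X_n} \bigr) U_0 = U_0^{-1}\, \clmat_q(\alpha(w))\, U_0, \]
since by construction $\alpha(w)$ is the concatenation of the block $(1,1)$ for each letter $A$ and the block $(2,2)$ for each letter $B$. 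Taking traces and using conjugation-invariance of the trace together with Proposition~\ref{prop:circfence} gives $\mathrm{tr}(w_q(A,B)) = \mathrm{tr}(\clmat_q(\alpha(w))) = \overline{\rank}(\alpha(w))$, which is \eqref{eq=cohntotrace}.

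I expect the only genuine content to be the two block identifications together with the conjugation identity above; everything else is a handful of $2\times 2$ computations plus the formal telescoping and conjugation-invariance of the trace. The conceptual reason the conjugation by $U_0$ enters is that $[A]_q$ and $[B]_q$ are \emph{dual} rank matrices, which sit next to the corresponding $\clmat_q$-blocks exactly by the placement of the trailing $U_0$ of Equation~\ref{eq:fen22} relative to the seam. One should also note in passing that the trivial words $w = A$ and $w = B$ (for which the identity reads $\mathrm{tr}([A]_q) = \overline{\rank}((1,1))$ and $\mathrm{tr}([B]_q) = \overline{\rank}((2,2))$) are handled directly, so there is nothing degenerate to worry about. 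Alternatively, one can replace the conjugation step by a purely combinatorial argument: iterating Proposition~\ref{prop:dualadding} gives $\mathrm{tr}(w_q(A,B)) = \rank(\revclose(\overrightarrow{P_{X_1}} \revdiplus \cdots \revdiplus \overrightarrow{P_{X_n}}))$, and one then checks that each $\revdiplus$-seam and the closing $\revclose$ contributes a single up-step while $\overrightarrow{P_A}$ and $\overrightarrow{P_B}$ contribute a down-step and an up--down--down respectively, so that the resulting cyclic sequence of steps is the circular fence $\overline{F}(\alpha(w))$ up to a cyclic shift of segments—harmless by the invariance in Theorem~\ref{thm:ourtheorem}.
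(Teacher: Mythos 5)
Your proposal is correct, and its main line of argument is genuinely more algebraic than the paper's. The paper verifies $\drm_q(\overrightarrow{P_A})=[A]_q$ and $\drm_q(\overrightarrow{P_B})=[B]_q$ by direct computation of the four restricted rank polynomials and then argues combinatorially: iterated $\revdiplus$ glues the blocks into the fence poset of $\alpha(w)$ with one edge missing, and $\revclose$ (the trace) supplies that edge, producing $\overline{F}(\alpha(w))$ — this is essentially the ``alternative'' you sketch in your last sentence. Your primary route instead identifies $[A]_q=U_0^{-1}(U_0D_0)U_0$ and $[B]_q=U_0^{-1}(U_0^2D_0^2)U_0$, telescopes the conjugations to get $w_q(A,B)=U_0^{-1}\,\clmat_q(\alpha(w))\,U_0$, and invokes conjugation-invariance of the trace together with Proposition~\ref{prop:circfence}. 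Both are valid; I checked the matrix identities $D_0U_0=[A]_q$ and $U_0D_0^2U_0=[B]_q$ and they hold. What your version buys is that it delegates all combinatorics to the already-proved Proposition~\ref{prop:circfence} and makes the role of the $U_0$-conjugation (i.e.\ of the trailing $U_0$ in Equation~\ref{eq:fen22}) explicit, which also explains cleanly why the answer is $\clmat_q(\alpha(w))$ on the nose rather than up to a cyclic shift; the paper's version, by contrast, makes the poset-gluing picture visible, which is the point the section is trying to illustrate. One tiny quibble: in your alternative combinatorial argument you appeal to cyclic-shift invariance of $\overline{\rank}$ to absorb the rotation of the step sequence, whereas a rotation of the cyclic up/down word is just a relabelling of the same circular poset, so no identity between distinct polynomials is actually needed there.
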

	
	\begin{proof} Calculating the corresponding rank polynomials shows us that $\drm_q(\overrightarrow{P_A})=[A]_q$ and $\drm_q(\overrightarrow{P_B})=[B]_q$. The matrix product $[A]_q^k\,[B]_q$ therefore corresponds to the poset formed by connecting copies of $A$ and $B$. With the connected edges included, any copy of $B$ corresponds to $2,2$ and any copy of $A$ corresponds to $1,1$ . As seen in the example  $\overrightarrow{P_A}\revdiplus\overrightarrow{P_A}\revdiplus\overrightarrow{P_A}\revdiplus\overrightarrow{P_B}$ below, we end up with the fence poset for $\alpha(w)$ with the leftmost edge missing.  Taking the trace is equivalent to forming the missing edge by connecting the two ends, giving us $\overline{\rank}(\alpha(w))$.
		
		\begin{figure}[ht]
  \centering
			\begin{tikzpicture}[scale=.25]
				\fill(2,2) circle(.2);
				\fill(4,0) circle(.2);
				\fill(6,2) circle(.2);
				\fill(8,0) circle(.2);  
				\fill(10,2) circle(.2);
				\fill(12,0) circle(.2);
				\fill(14,2) circle(.2);
				\fill(16,4) circle(.2); 
				\fill(18,2) circle(.2); 
				\fill(20,0) circle(.2); 
				\draw(2,2)--(4,0) (6,2)--(8,0) (10,2)--(12,0) (14,2)--(16,4)--(20,0);
				\draw[ultra thick,red,dotted] (4,0)--(6,2) (8,0)--(10,2) (12,0)--(14,2);
				\node at (2, 2.7) {$x_L$};
				\node at (20.3, 0.8) {$x_R$};
				\node at (11,-2) {$\overrightarrow{P_A}\revdiplus\overrightarrow{P_A}\revdiplus\overrightarrow{P_A}\revdiplus\overrightarrow{P_B}$};
			\end{tikzpicture}\qquad \qquad\begin{tikzpicture}[scale=.25]
				\fill(2,2) circle(.2);
				\fill(4,0) circle(.2);
				\fill(6,2) circle(.2);
				\fill(8,0) circle(.2);  
				\fill(10,2) circle(.2);
				\fill(12,0) circle(.2);
				\fill(14,2) circle(.2);
				\fill(16,4) circle(.2); 
				\fill(18,2) circle(.2); 
				\fill(20,0) circle(.2); 
				\draw(2,2)--(4,0) (6,2)--(8,0) (10,2)--(12,0) (14,2)--(16,4)--(20,0);
				\draw[] (4,0)--(6,2) (8,0)--(10,2) (12,0)--(14,2);
				\draw[ultra thick, red,dotted,->] (20,0)--(21.5,1.5);
				\draw[ultra thick, red,dotted,->] (0.5,.5)--(2,2);
				\node at (11,-2) {$\revclose(\overrightarrow{P_A}\revdiplus\overrightarrow{P_A}\revdiplus\overrightarrow{P_A}\revdiplus\overrightarrow{P_B})$};
			\end{tikzpicture}
			
		\end{figure}

	\end{proof}
	
	\begin{thm}\label{thm:markov} For a  Christoffel $ab$-word $w(a,b)=a\hat{w}(a,b)b$ we have:
		\begin{equation}\label{eq:christoffel}
			\mathrm{tr}(w_q(A,B))=\overline{\rank}((1,1,\alpha(\hat{w}),2,2);q)=[3]_q\cdot\overline{\rank}((3,1,\alpha(\hat{w});q).
		\end{equation}
		In particular,  \begin{equation*}  \mathrm{tr}(w_q(A,B))\in [3]_q \mathbb{N}[q].\end{equation*}
  Consequentally, the $q$-deformed Markov number corresponding to $w(a,b)=a\hat{w}(a,b)b$ is given by the rank polynomial of the circular fence poset $\overline{F}(3,1,\alpha(\hat{w}))$. It is symmetric and unimodal.
	\end{thm}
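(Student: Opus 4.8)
The plan is to run three reductions: translate $\mathrm{tr}(w_q(A,B))$ into a circular rank polynomial, peel off the factor $[3]_q$ by a new instance of Lemma~\ref{lem:palin}, and then read the Markov statement off the theorem of \cite{kogiso} recalled above. The first equality is immediate: since $w(a,b)=a\,\hat{w}(a,b)\,b$ and $\alpha(\cdot)$ is applied block by block, $\alpha(w)=(1,1,\alpha(\hat{w}),2,2)$ (the leading $a$ contributing the $1,1$ and the trailing $b$ the $2,2$), so Equation~\ref{eq=cohntotrace} gives $\mathrm{tr}(w_q(A,B))=\overline{\rank}((1,1,\alpha(\hat{w}),2,2);q)$.

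For the factorization I would first record two structural facts: $\hat{w}$ is a palindrome (the classical structure theorem for Christoffel words, that every nontrivial one has the form $a\,m\,b$ with $m$ a palindrome), and consequently $\rho:=\alpha(\hat{w})$ is a palindromic composition with an even number of parts, since the substitution $a\mapsto(1,1)$, $b\mapsto(2,2)$ carries a palindromic word to a palindromic composition (each block being a length-two palindrome). It therefore suffices to prove that for every palindromic composition $\rho$ of even length, $\overline{\rank}((1,1,\rho,2,2);q)=[3]_q\,\overline{\rank}((3,1,\rho);q)$, and I would do this in the manner of Theorem~\ref{thm:3}. By Proposition~\ref{prop:circfence} the identity is equivalent to $\mathrm{tr}(U_0 D_0\,\clmat_q(\rho)\,U_0^2 D_0^2)=[3]_q\,\mathrm{tr}(U_0^3 D_0\,\clmat_q(\rho))$; moving $U_0^2D_0^2$ around the trace by cyclicity, subtracting, and factoring out $U_0^2$ on the left and $U_0 D_0$ on the right, the difference of the two sides becomes $\mathrm{tr}\bigl(U_0^2(D_0^2-[3]_q)U_0 D_0\,\clmat_q(\rho)\bigr)$. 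Now $D_0^2-[3]_q$ equals $[2]_q$ times the constant matrix $N$ exhibited in the proof of Theorem~\ref{thm:3} (the $k=2$ instance of $(D_0)^k-[k+1]_q=[k]_q N$), and $U_0 N U_0$ equals $q$ times the matrix defining $\Psi^+$, so $\mathrm{tr}(U_0 N U_0\,X)=q\,\Psi^+(X)$ for every $2\times2$ matrix $X$ (compare Equation~\ref{eq:midpalind}); hence the difference equals $[2]_q\,q\,\Psi^+\bigl(D_0\,\clmat_q(\rho)\,U_0\bigr)$. Since $D_0\,\clmat_q(\rho)\,U_0=\clmat_q((0,1,\rho,1,0))$ and $(1,\rho,1)$ is palindromic of even length, Equation~\ref{eq:palin+} of Lemma~\ref{lem:palin} makes this $0$, which proves the identity. (Alternatively, this identity is (Id 1) of Theorem~\ref{thm:3} with $k=2$, $r=1$, $X=\rho$, combined with cyclic-shift invariance, Equation~\ref{eq:cyclic shift}, and the reversal invariance $\overline{\rank}(\beta;q)=\overline{\rank}(\beta^{\mathrm{rev}};q)$: the pairs $(1,2,2,\rho,1),(3,\rho,1)$ and $(1,1,\rho,2,2),(3,1,\rho)$ are related by reversing and rotating, using $\rho^{\mathrm{rev}}=\rho$; and the reversal invariance holds because mirroring a circular fence produces a circular fence isomorphic to the original.)

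The remaining assertions follow quickly. Membership $\mathrm{tr}(w_q(A,B))\in[3]_q\mathbb{N}[q]$ is immediate, since $\overline{\rank}((3,1,\rho);q)$ has nonnegative integer coefficients by Theorem~\ref{thm:ourtheorem}. By the theorem of \cite{kogiso} recalled above, the $q$-deformed Markov number of the Christoffel word $w$ is $[3]_q^{-1}\mathrm{tr}(w_q(A,B))$, which the identity just proved identifies with $\overline{\rank}((3,1,\alpha(\hat{w}));q)$, i.e.\ the rank polynomial of $\overline{F}(3,1,\alpha(\hat{w}))$; symmetry is the symmetry half of Theorem~\ref{thm:ourtheorem}. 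For unimodality I would split into cases according to the shape of $(3,1,\alpha(\hat{w}))$ and invoke the three sufficient conditions of Theorem~\ref{thm:ourtheorem}: if $\hat{w}$ contains a letter $b$, then $\alpha(\hat{w})$, hence the composition, has two consecutive parts equal to $2$ (second bullet); if $\hat{w}=a^m$ with $m\geq 1$, the composition is $(3,1^{2m+1})$, whose first three parts $3,1,1$ satisfy $|3-1|>1$ (third bullet); and if $\hat{w}$ is empty, then $w=ab$ and the Markov number is $\overline{\rank}((3,1);q)=[5]_q$, plainly unimodal.

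The only genuinely new input is the identity $\overline{\rank}((1,1,\rho,2,2);q)=[3]_q\,\overline{\rank}((3,1,\rho);q)$ for $\rho$ palindromic of even length; everything else is bookkeeping. Within that step the delicate point is matching hypotheses: verifying that $\rho=\alpha(\hat{w})$ really is palindromic of even length --- which is exactly where the Christoffel, hence palindromic, structure of $\hat{w}$ is used --- and arranging the trace and matrix manipulations so that the residual term is precisely $\Psi^+$ of $\clmat_q$ of an even palindrome, so that Lemma~\ref{lem:palin} applies verbatim.
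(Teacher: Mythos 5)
Your proof is correct, and its skeleton coincides with the paper's: identify $\mathrm{tr}(w_q(A,B))$ with $\overline{\rank}(\alpha(w);q)$ via the Cohn-matrix proposition, observe that $\rho=\alpha(\hat w)$ is an even-length palindrome because $\hat w$ is a palindrome, reduce the $[3]_q$-factorization to the vanishing of $\Psi^+$ on $\clmat_q$ of an even palindrome, and then quote symmetry. There are two genuine differences worth recording. First, for the factorization the paper simply invokes (Id 1) at $k=2$, $r=1$ to get $\overline{\rank}((1,2,2,\rho,1);q)=[3]_q\overline{\rank}((3,\rho,1);q)$ and then passes to $(1,1,\rho,2,2)$ and $(3,1,\rho)$ by cyclic shifts and reversal (using $\rho^{\mathrm{rev}}=\rho$); you instead rerun the trace manipulation of Theorem~\ref{thm:3} directly on the target compositions, landing on $[2]_q\,q\,\Psi^+(\clmat_q((0,1,\rho,1,0)))=0$ --- your computation checks out ($U_0NU_0=q\bigl[\begin{smallmatrix}1&-1-q\\1&-1\end{smallmatrix}\bigr]$, and $(1,\rho,1)$ is an even palindrome), and it has the small advantage of not needing the reversal-invariance step, though you also note the paper's shortcut in your parenthetical. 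Second, and more substantively, the paper outsources unimodality to the forthcoming resolution of Conjecture~\ref{ourconj} in \cite{chainlink}, whereas you derive it from the already-established partial criteria of Theorem~\ref{thm:ourtheorem} by a three-way case split ($\hat w$ contains a $b$: two consecutive parts $2,2$; $\hat w=a^m$: the triple $3,1,1$ with $|3-1|>1$; $\hat w$ empty: $\overline{\rank}((3,1);q)=[5]_q$ directly). All three cases are checked correctly, so your argument makes the unimodality claim self-contained relative to \cite{ourpaper} --- a modest but real improvement over the paper's citation-based proof.
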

 
	\begin{proof} As the word $\hat{w}$ is palindromic and consists of pairs of parts, the corresponding permutation $\alpha(\hat{w})$ is palindromic with an even number of parts. Applying (Id 1) with $k=2$ and $r=1$ gives us:
	\begin{align*}
	    \overline{\rank}((1,2,2,\alpha(\hat{w}),1);q)=[3]_q \overline{\rank}((3,\alpha(\hat{w}),1);q).
	\end{align*} This is equivalent to Equation~\ref{eq:christoffel}  as moving matrices cyclically does not change trace and we can reverse the order by the symmetry of circular rank polynomials. The symmetry and unimodality are shown in \cite{ourpaper} and \cite{chainlink} respectively.
	\end{proof}

 		\begin{figure}[ht]
  \centering
			\begin{tikzpicture}[scale=.25]
   \fill(-4,4) circle(.2);
    \fill(-6,2) circle(.2);
       \fill(-8,0) circle(.2);
				\fill(-2,6) circle(.2);
				\fill(4,0) circle(.2);
				\fill(6,2) circle(.2);
				\fill(8,0) circle(.2);  
				\fill(10,2) circle(.2);
				\fill(12,4) circle(.2);
				\fill(14,2) circle(.2);
				\fill(16,0) circle(.2); 
				\fill(18,2) circle(.2); 
				\fill(20,0) circle(.2); 
				\draw(-8,0)--(-2,6)--(4,0) (6,2)--(8,0) (10,2)--(12,4) (14,2)--(16,0)--(18,2)--(20,0);
				\draw[] (4,0)--(6,2) (8,0)--(10,2) (12,4)--(14,2);
				\draw[ultra thick, red,dotted,->] (20,0)--(21.5,1.5);
				\draw[ultra thick, red,dotted,->] (-9.5,-1.5)--(-8,0);
			\end{tikzpicture}
\caption{The circular fence poset $\overline{F}(3,1,1,1,2,2,1,1)$}\label{fig:markovexample}
		\end{figure}
  For example, for the Christoffel word $a^2bab=a(aba)b$, we get the circular fence poset for $(3,1,1,1,2,2,1,1)$ shown in Figure~\ref{fig:markovexample}. The corresponding $q$-deformed Markov number is given by:
  \begin{equation*}
\overline{\rank}((3,1,1,1,2,2,1,1);q)=1+4q+9q^2+16q^3+23q^4+29q^5+30q^6+29q^7+23q^8+16q^9+9q^{10}+4q^{11}+q^{12}.
  \end{equation*}

See Table~\ref{tab:markovnumbers} for other examples of $q$-deformed numbers with corresponding Christoffel words and circular fence posets.
\begin{table}[]
    \centering
    \begin{tabular}{|c|c|l|}
    \hline
      $ab$   & $\overline{F}(3,1)$&$1+q+q^2+q^3+q^4$ \\
 $a^2b$   & $\overline{F}(3,1,1,1)$&$1+2q+2q^2+3q^3+2q^4+2q^5+q^6$ \\
 $ab^2$   & $\overline{F}(3,1,2,2)$&$1+2q+4q^2+5q^3+5q^4+5q^5+4q^6+2q^7+q^8$ \\
 $a^3b$   & $\overline{F}(3,1,1,1,1,1)$&$1+3q+4q^2+6q^3+6q^4+6q^5+4q^6+3q^7+q^8$ \\
% $a^2bab$   & $\overline{F}(3,1,1,1,2,2,1,1)$&$1+4q+9q^2+16q^3+23q^4+29q^5+30q^6+29q^7+23q^8+\cdots+q^{12}$ \\
  $abab^2$   & $\overline{F}(3,1,2,2,1,1,2,2)$&$1+4q+11q^2+22q^3+36q^4+50q^5+60q^6+65q^7+60q^8+\cdots+q^{14}$ \\
 $ab^2$   & $\overline{F}(3,1,2,2,2,2)$&$1+3q+8q^2+14q^3+20q^4+25q^5+27q^6+25q^7+20q^8+\cdots+q^{12}$ \\
  $a^4b$   & $\overline{F}(3,1,1,1,1,1,1,1)$&$1+4q+7q^2+11q^3+14q^4+15q^5+14q^6+11q^7+7q^8+4q^9+q^{10}$ \\
    $ab^4$   & $\overline{F}(3,1,2,2,2,2,2,2)$&$1+4q+13q^2+29q^3+53q^4+82q^5+110q^6+131q^7+139q^8+\cdots+q^{16}$ \\
 \hline
    \end{tabular}
    \caption{Christoffel Words with corresponding fences and Markov polynomials.}
    \label{tab:markovnumbers}
    \end{table}
	%%%%%%%%%%%%%%%%%%%%%%%%%%%%%%%%%%%%%5

\section{Generalized Oriented Posets}
\label{sec:further} In this section, we will consider a generalization of oriented posets where instead of one specialized vertex for each end, we pick a list of left end vertices $L=(L_1,L_2,\ldots, L_t)$
	and a list of right end vertices $(R_1,R_2,\ldots,R_s)$ on the poset $P$. We call the structure $\overrightarrow{P}=(P, L,R)$ we obtain a \definition{ generalized (t,s)-oriented poset}. 

	As in Section~\ref{sec:oriented posets}, we consider subsets of the distributive lattice $J(P)$. We use a subscript on the right to describe which nodes on $R$ should be included, and a subscript on the left to describe which nodes on $L$ should be \emph{excluded}. Let $A$ be a subset of $[t]$ and $B$ be a subset of $[s]$. We set: 
	\begin{align*}
		\rank(\overrightarrow{P};q):=& \sum_{I\in J(P)}  q^{|I|}  \qquad \qquad  &_{{A}}\rank(\overrightarrow{P};q):=\sum_{\substack{I\in J(P)\\ i\in A \Rightarrow L_i\notin I} }  q^{|I|} \\
		R_B(\overrightarrow{P};q):=& \sum_{\substack{I\in J(P)\\ i\in B \Rightarrow \rank_i\in I}} q^{|I|}  \qquad &
		_A R_B(\overrightarrow{P};q):= \sum_{\substack{I\in J(P)\\  i\in A \Rightarrow L_i\notin I\\ i\in B \Rightarrow R_i\in I} }  q^{|I|}\\
		\rank_{\dualfix{B}}(\overrightarrow{P};q):=& \sum_{\substack{I\in J(P)\\ i\in {{B}} {\color{red}{\Leftrightarrow}} R_i\in I}} q^{|I|}  \qquad &
		_A \rank_{\dualfix{B}}(\overrightarrow{P};q):= \sum_{\substack{I\in J(P)\\  i\in A \Rightarrow L_i\notin I\\ i\in {{B}} {\color{red}{\Leftrightarrow}} \rank_i\in I}} q^{|I|} 
	\end{align*}
	Generalized $(t,s)$-oriented posets have rank matrices of size $2^t \times 2^s$, where rows are indexed by subsets of $[t]$ and columns are indexed by subsets of $s$. When $t=s=4$  we get the following matrices:
	\begin{align*}
	\rmm_q(\overrightarrow{P})&=\begin{bmatrix}
	    R & - R_{\{1\}}& - R_{\{2\}} &  R_{\{1,2\}}\\
	      _{\{1\}}R & - _{\{1\}}R_{\{1\}} & - _{\{1\}}R_{\{2\}} &  _{1}R_{\{1,2\}}\\
	      _{\{2\}}R & - _{\{2\}}R_{\{1\}} & - _{\{2\}}R_{\{2\}} &  _{\{2\}}R_{\{1,2\}}\\
	      _{\{1,2\}}R & - _{\{1,2\}}R_{\{1\}} & - _{\{1,2\}}R_{\{2\}} &  _{\{1,2\}}R_{\{1,2\}}
	\end{bmatrix},\\	\drm_q(\overrightarrow{P})&=
	\begin{bmatrix}
	    R_{\dualfix{\{1,2\}}} & R_{\dualfix{\{2\}}}&  R_{\dualfix{\{1\}}} &  R_{\dualfix{\varnothing}}\\
	      _{\{1\}}R_{\dualfix{\{1,2\}}} &  _{\{1\}}R_{\dualfix{\{2\}}} &  _{\{1\}}R_{\dualfix{\{1\}}} &  _{1}R_{\dualfix{\varnothing}}\\
	      _{\{2\}}R_{\dualfix{\{1,2\}}} & _{\{2\}}R_{\dualfix{\{2\}}} &  _{\{2\}}R_{\dualfix{\{1\}}} &  {_{\{2\}}R_{\dualfix{\varnothing}}}\\
	      _{\{1,2\}}R_{\dualfix{\{1,2\}}} &  _{\{1,2\}}R_{\dualfix{\{2\}}} & { _{\{1,2\}}R_{\dualfix{\{1\}}} }&  _{\{1,2\}}R_{\dualfix{\varnothing}}.
	\end{bmatrix}
	\end{align*}
	More generally, the rank matrix and the dual rank matrix of a generalized (t,s)-oriented poset $\overrightarrow{P}$ are matrices of size $2^{|L|}\times 2^{|R|} $ defined as follows:
	\begin{align*}
		\rmm_q(\overrightarrow{P}):=((-1)^{|B|} \, _A \rank_B)_{A,B},\qquad 
		\drm_q(\overrightarrow{P}):=\,( _A \rank_{\dualfix{B^c}})_{A,B}.
	\end{align*}
	Here $A$ and $B$ range over subsets of $[t]$ and $[s]$ respectively:  $\varnothing \subseteq A\subsetneq [t]$, 	$\varnothing\subsetneq B\subseteq [s]$.	Note that when $t=s=1$, we recover the rank matrices we previously defined. 
	
	For a generalized $(t,s)$-oriented poset $\overrightarrow{P}=(P,L_P,R_P)$ and a generalized $(s,r)$-oriented poset $\overrightarrow{Q}=(Q,L_Q,R_Q)$, we set $\overrightarrow{P}\diplus\overrightarrow{Q}=(S,L_P,R_Q)$ where $S$ is the poset given by connecting $P$ and $Q$ by adding relations ${R_P}_i\succeq {L_P}_i $ for all $i \in S$. Also, when $t=s$ we denote the structure obtained by adding the relations ${R}_i\succeq {L}_i $ for all $i \in S$ by $\close(\overrightarrow{P})$.
	\begin{prop} The rank matrices for generalized oriented posets satisfy the following:
		\begin{align*}
			\rmm_q(\overrightarrow{P}\diplus \overrightarrow{Q})&=\rmm_q(\overrightarrow{P})\cdot\rmm_q( \overrightarrow{Q}),
			\qquad \qquad \rank(\close(\overrightarrow{P});q)=\mathrm{tr}(\rmm_q(\overrightarrow{P})),\\
			\rmm_q(\overrightarrow{P}\revdiplus \overrightarrow{Q})&=\drm_q(\overrightarrow{P})\cdot\rmm_q( \overrightarrow{Q}),\\
			\drm_q(\overrightarrow{P}\revdiplus \overrightarrow{Q})&=\drm_q(\overrightarrow{P})\cdot\drm_q( \overrightarrow{Q}), \qquad \qquad
			\rank(\revclose(\overrightarrow{P});q)=\mathrm{tr}(\drm_q(\overrightarrow{P})),\\
					\drm_q(\overrightarrow{P}\diplus \overrightarrow{Q})&=\rmm_q(\overrightarrow{P})\cdot\drm_q( \overrightarrow{Q}).
		\end{align*}
		
	\end{prop}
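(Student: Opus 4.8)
The plan is to transfer, one identity at a time, the inclusion--exclusion arguments used for the $\rmm_q$-multiplicativity proposition of Section~\ref{sec:oriented posets} and for Proposition~\ref{prop:dualadding}, replacing the single two-way choice at the glued endpoint by a sum over all subsets of the set of glued endpoints. The enabling observation is structural: $\overrightarrow{P}\diplus\overrightarrow{Q}$ is obtained by adding only relations $R_{P,i}\succeq L_{Q,i}$, each directed from a vertex of $P$ to a vertex of $Q$, so the transitive closure produces no new relation inside $P$ or inside $Q$. Consequently an ideal of $\overrightarrow{P}\diplus\overrightarrow{Q}$ is precisely a pair $(I_1,I_2)$ with $I_1\in J(P)$, $I_2\in J(Q)$ satisfying $R_{P,i}\in I_1\Rightarrow L_{Q,i}\in I_2$ for all $i$, of weight $q^{|I_1|+|I_2|}$; the same description holds for $\overrightarrow{P}\revdiplus\overrightarrow{Q}$ with the implication reversed.

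First I would prove $\rmm_q(\overrightarrow{P}\diplus\overrightarrow{Q})=\rmm_q(\overrightarrow{P})\cdot\rmm_q(\overrightarrow{Q})$ entrywise. The $(A,C)$-entry of the product is $(-1)^{|C|}\sum_{B}(-1)^{|B|}\,{}_A\rank_B(\overrightarrow{P})\,{}_B\rank_C(\overrightarrow{Q})$, and ${}_A\rank_B(\overrightarrow{P})\,{}_B\rank_C(\overrightarrow{Q})$ counts the pairs $(I_1,I_2)$ meeting the left/right conditions prescribed by $A$ and $C$ that moreover have $R_{P,i}\in I_1$ and $L_{Q,i}\notin I_2$ for all $i\in B$; the alternating sum over $B$ is then exactly the inclusion--exclusion that strikes out all incompatible pairs, leaving $(-1)^{|C|}\,{}_A\rank_C(\overrightarrow{P}\diplus\overrightarrow{Q})$. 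The point to pin down is the sign bookkeeping: matrix multiplication pairs the $B$-column of $\rmm_q(\overrightarrow{P})$, which by definition carries $(-1)^{|B|}$, with the $B$-row of $\rmm_q(\overrightarrow{Q})$, which carries no $B$-dependent sign, so precisely the M\"obius sign $(-1)^{|B|}$ demanded by the inclusion--exclusion over the glued set $B$ appears, and nothing spurious. The trace formula $\rank(\close(\overrightarrow{P});q)=\mathrm{tr}(\rmm_q(\overrightarrow{P}))=\sum_A(-1)^{|A|}\,{}_A\rank_A(\overrightarrow{P})$ is the same device applied to all $t$ relations $R_i\succeq L_i$ at once: for a fixed ideal $I$ the inner alternating sum over subsets of $\{i:L_i\notin I,\ R_i\in I\}$ vanishes unless that set is empty, i.e.\ unless $R_i\in I\Rightarrow L_i\in I$ for every $i$, which is the defining condition of $\close(\overrightarrow{P})$. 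The companion statements $\drm_q(\overrightarrow{P}\revdiplus\overrightarrow{Q})=\drm_q(\overrightarrow{P})\cdot\drm_q(\overrightarrow{Q})$ and $\rank(\revclose(\overrightarrow{P});q)=\mathrm{tr}(\drm_q(\overrightarrow{P}))$ drop out of the identical computation with inclusions and exclusions interchanged and with the ``exactly $B$'' column convention $\rank_{\dualfix{B}}$ in place of the signed ``at least $B$'' one, just as Proposition~\ref{prop:dualadding} mirrors the $\rmm_q$ proposition.

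The two mixed identities are then formal, exactly as in Lemma~\ref{lem:dualregularconnection}. The key input is a single change-of-basis relation $\drm_q(\overrightarrow{P})=\rmm_q(\overrightarrow{P})\cdot C_s$, valid for any $(t,s)$-oriented poset, where $C_s$ is the explicit $2^s\times2^s$ matrix realizing on the column index the identity $\rank_{\dualfix{B^c}}=\sum_{C\supseteq B^c}(-1)^{|B^c|}\bigl((-1)^{|C|}\rank_C\bigr)$ coming from inclusion--exclusion on ``$R_i\notin I$ for $i\in B$''; concretely $(C_s)_{C,B}=(-1)^{|B^c|}$ when $B^c\subseteq C$ and $0$ otherwise. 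This $C_s$ is a signed Boolean zeta transform, unitriangular up to signs, with inverse the corresponding M\"obius transform, so also $\rmm_q(\overrightarrow{P})=\drm_q(\overrightarrow{P})C_s^{-1}$; for $s=1$ it specializes to $\begin{bmatrix}0&1\\-1&1\end{bmatrix}$ and $\begin{bmatrix}1&-1\\1&0\end{bmatrix}$. Combining this with the two ``pure'' multiplicativity identities and associativity yields the rest, for instance $\rmm_q(\overrightarrow{P})\cdot\drm_q(\overrightarrow{Q})=\rmm_q(\overrightarrow{P})\rmm_q(\overrightarrow{Q})C_r=\rmm_q(\overrightarrow{P}\diplus\overrightarrow{Q})C_r=\drm_q(\overrightarrow{P}\diplus\overrightarrow{Q})$ and $\drm_q(\overrightarrow{P})\cdot\rmm_q(\overrightarrow{Q})=\drm_q(\overrightarrow{P})\drm_q(\overrightarrow{Q})C_r^{-1}=\drm_q(\overrightarrow{P}\revdiplus\overrightarrow{Q})C_r^{-1}=\rmm_q(\overrightarrow{P}\revdiplus\overrightarrow{Q})$.

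I expect the \emph{main obstacle} to be purely notational: one must keep straight that a subset $B$ of glued endpoints means ``included'' when it labels a column of $\rmm_q(\overrightarrow{P})$ but ``excluded'' when it labels the matching row of $\rmm_q(\overrightarrow{Q})$ (and correspondingly for $\drm_q$), and check that the lone surviving $(-1)^{|B|}$ is exactly the M\"obius sign the inclusion--exclusion needs. Once the conventions are fixed the computations are mechanical, and the invertibility of $C_s$ (it has determinant $\pm1$) is standard; no idea beyond the rank-one case of Sections~\ref{sec:oriented posets}--\ref{sec:dual} is needed.
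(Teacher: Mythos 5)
Your proposal is correct and follows essentially the same route as the paper, whose proof is only a sketch: inclusion--exclusion over the glued endpoints for the $\rmm_q$ identities, a disjoint decomposition indexed by the set of included right endpoints for the $\drm_q$ identities, and right multiplication by an invertible change-of-basis matrix generalizing Lemma~\ref{lem:dualregularconnection} for the two mixed identities. Your write-up simply supplies more detail (the explicit signed zeta matrix $C_s$ and the sign bookkeeping) than the paper records.
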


The proofs are similar to the case with $t=1$ and $s=1$. The multiplication of matrices corresponds to adding the connecting relations, by inclusion exclusion for $\rmm_q$ case, and adding up disjoint possiblities for the $\drm_q$ case. The conditions on $L_P$ and $R_Q$ remain unchanged, so those sets are carried over as left and right connecting node sets of the product. The trace works similarly. One can go from one matrix to the other by right multiplication with an invertible matrix as in the $t=s=1$ case, so that multiplying regular and dual rank matrices behaves nicely.

The choice of inclusion-exclusion for the regular rank matrix and taking disjoint possiblities for the dual addition is not dictated by what we want to achieve, but a particular choice made in order to match the existing notation. 

As the size of the matrix increases exponentially with the size of $t$ and $s$, the setting of generalized oriented matrices might not be very practical for studying posets. There are cases where it comes in very handy however. One case is to describe end-points of the poset where no more addition is planned. In that case one can take $s$ or $t$ to be $0$, and simplify calculations. If this is done at both left and right ends, one ends up with only the rank polynomial as the output. There is also the option of simplifying the matrices when the selected left and right end-points have relations among them, making some of the rows redundant. The simplest case is when one adds the condition that the nodes on $L$ and $R$ are chains. In that case, one can use $(s+1) \times (t+1)$ matrices to convey the information needed for adding up posets.

\section{Comments and Further Directions}

\begin{itemize}
    \item \textbf{Rank Polynomials of Self-Dual Partitions:}
    The Ferrers diagram of a partition $\lambda$ has a natural poset structure given by replacing the boxes with nodes of the poset, as seen in Figure~\ref{fig:further} below. The ideals of the poset correspond to nodes below $\lambda$ in the Hasse diagram of the Young Lattice, so the rank polynomial can be seen as the generating function of the size statistic on an interval in the Young lattice. It was conjectured in 1990 (see \cite{stanton}) that if $\lambda$ is a self-dual partition (it is equal to its transpose) then the corresponding rank polynomial is unimodal.
    
    \begin{figure}[ht]
        \centering
 \begin{tikzpicture} [scale=.35]
\draw (0,0) grid (4,4);
   \draw (0,4) grid (3,6);
   \draw (4,0) grid (6,3);
   \draw (0,6) grid (1,7);
   \draw (6,0) grid (7,1);
 \end{tikzpicture} $\qquad$
 \begin{tikzpicture} [scale=.4]
 \begin{scope} [rotate=45] \draw (0,0) grid (3,3);
   \draw (0,3) grid (2,5);
   \draw (3,0) grid (5,2);
   \draw (0,5) grid (2,6);
   \draw (5,0) grid (6,2);
   \foreach \x in {0,...,3}
    \foreach \y in {0,...,3} {\fill(\x,\y) circle(.2); }
    \foreach \x in {0,1,2}
    \foreach \y in {4,5} {\fill(\x,\y) circle(.2);  \fill(\y,\x) circle(.2); }
    \foreach \x in {0,1}
    \foreach \y in {6} {\fill(\x,\y) circle(.2);  \fill(\y,\x) circle(.2); }
    \end{scope}
 \end{tikzpicture}$\qquad$
 \begin{tikzpicture} [scale=.4]
 \begin{scope} [rotate=45] \draw (0,0) grid (3,3);
   \draw (0,4.5)--(2,4.5) (0,6)--(2,6) (0,7.5)--(1,7.5) (4.5,0)--(4.5,2) (6,2)--(6,0) (7.5,0)--(7.5,1);
   \foreach \x in {0,...,3}
    \foreach \y in {0,...,3} {\fill(\x,\y) circle(.2); }
    \foreach \x in {0,1,2}
    \foreach \y in {4.5,6} {\fill(\x,\y) circle(.2);  \fill(\y,\x) circle(.2); }
    \foreach \x in {0,1}
    \foreach \y in {7.5} {\fill(\x,\y) circle(.2);  \fill(\y,\x) circle(.2); }
    \end{scope}
 \end{tikzpicture}
        \caption{The Young diagram for $(7,6,4,3,3,1)$ (left) with corresponding poset (middle) and decomposition (right).}
        \label{fig:further}
    \end{figure}

One can use the generalized rank matrices defined in Section~\ref{sec:further} to decompose the poset for $\lambda$ into one central rectangular lattice corresponding to the Durfee square of $\lambda$ multiplied with chains of decreasing sizes from both sides. If $\lambda$ is self-dual, as seen in Figure~\ref{fig:further}, the added chains will be paired up, as will the corresponding matrices. It might be interesting to examine whether the symmetry of the structure can be used to make an argument towards unimodality.

    \item 	\textbf{Resolution of Conjecture~\ref{ourconj}:} After this paper was shared on arXiv, the author, together with Ravichandran and Özel, was able to resolve Conjecture~\ref{ourconj} about the unimodality of circular rank polynomials using characteristic polynomials of rank matrices. This is one of the results in the upcoming paper \cite{chainlink}. This also proves Conjecture~\ref{con:lec+}, fully characterizing  the cases where $\mathrm{tr}(M_q(c_1,c_2,\ldots,c_k))$ for $c_i\geq 2$ is not 
    unimodal.
    
    \item \textbf{Generating functions and Cluster Algebras:} The rank polynomial keeps track of the number of elements of each ideal of a poset, but the machinery described here would work equally as well with any other statistic. In particular, for a poset on nodes $x_1,x_2,\cdots,x_k$, one can replace the rank polynomial by the generating polynomial $\sum_I \prod_{i\in I} x_i$ at each step to get the generating function of the ideals of the matrix.
    
    This particular case has an application in cluster algebras, namely, it allows us to calculate the expansion formula of a curve in terms of a chosen basis using matrices. In the recent work \cite{cluster}, the author and Emine Yıldırım were able to use oriented posets to obtain a matrix formulation for expansion formulas for curves in cluster algebras. This method is easily programmable and is able to give the results fairly quickly, especially in larger cases.

\textbf{Connection to recent work: } After an initial version of this work was shared on the arXiv, there were some developments that might lead to future connections. The recent paper on higher dimer covers has a larger matrix expression related to cluster algebras \cite{musiker2023higher}. We believe that an expansion of the weight matrices from \cite{cluster} to the generalized case that takes into account the particular nature of the underlying posets might provide a $q$-deformation for said matrix, as well as resolve some questions posed by the authors. Differential operators corresponding to $T_q$ and $S_q$ are used in \cite{thomas2023infinitesimal} to give a $q$-deformation of the Witt algebra. Leclere and Morier-Genoud also developed a new combinatorial model to study the numerators and denominators of $q$-deformed rationals using triangulated polygons and annuli. The connections between this model and the related cluster expansions might be interesting to explore.
    
\end{itemize}

\section*{Acknowledgements}
The author was supported by Tübitak BİDEB 2218 grant 121C285. The author would like to thank Valentin Ovsienko for pointing out the similarity between rank polynomials of circular fence posets and matrices from \ref{conj:leclere}. The author also would like to thank Mohan Ravichandran, Emine Yıldırım and Can Ozan Oğuz for helpful discussions on finalizing the notation for oriented posets and the anonymous referees for their thoughtful suggestions and pointing out related literature.
\typeout{}	
\bibliographystyle{elsarticle-num}
\bibliography{op}
\end{document}